\numberwithin{equation}{section}
\theoremstyle{plain}
\newtheorem{theorem}{Theorem}[section]
\newtheorem{corollary}[theorem]{Corollary}
\newtheorem{lemma}[theorem]{Lemma}
\newtheorem{proposition}[theorem]{Proposition}
\theoremstyle{remark}
\newtheorem{remark}{Remark}[section]
\newtheorem{definition}[remark]{Definition}
\newtheorem{example}[remark]{Example}
\newcommand{\col}{{\rm col}}
\newcommand{\sign}{{\rm sign}}
\newcommand{\model}{{\rm patt}}
\newcommand{\aff}{{\rm aff}}
\newcommand{\betaOLS}{\hat\beta^{\rm OLS}}
\newcommand{\betaLASSO}{\hat\beta^{\rm LASSO}}
\newcommand{\betaSLOPE}{\hat\beta^{\rm SLOPE}}
\renewcommand{\P}{\mathbb{P}}
\newcommand{\R}{\mathbb{R}}
\newcommand{\E}{\mathbb{E}}
\DeclareMathOperator*{\argmin}{\arg\min}
\newcommand{\X}{{X}}
\newcommand{\Y}{{Y}}
\newcommand{\Zb}{{Z}}
\newcommand{\U}{{U}}
\newcommand{\I}{{I}}
\renewcommand{\P}{{P}}
\newcommand{\bLambda}{{\Lambda}}
\newcommand{\beps}{\varepsilon}
\newcommand{\Snew}[3]{S_{#1, {#2}}(#3)}
\newcommand{\C}{{C}}
\begin{document}

\begin{frontmatter}


\tnotetext[label1]{T. Skalski was supported by a French
	Government Scholarship.
	Research of \mbox{B. Ko{\l}odziejek} was funded by (POB Cybersecurity and Data Science) of Warsaw University of Technology within the Excellence Initiative: Research University (IDUB) programme. Research of P. Graczyk and T. Skalski was supported by Centre Henri Lebesgue, program ANR-11-LABX-0020-0. Research of P. Tardivel is supported by the region Bourgogne-Franche-Comté (EPADM project). The institute of X. Dupuis and P. Tardivel receives support from the EIPHI Graduate School (contract ANR-17-EURE-0002).}
\cortext[cor1]{Corresponding author.}
\fntext[cor7]{The order of authors is alphabetical.}
\title{Pattern recovery by SLOPE}
\author[inst1,inst2]{Ma\l{}gorzata Bogdan}
\ead{malgorzata.bogdan@uwr.edu.pl}
\author[inst3]{Xavier Dupuis}
\ead{xavier.dupuis@u-bourgogne.fr}
\author[inst4]{Piotr Graczyk}
\ead{piotr.graczyk@univ-angers.fr}
\author[inst5]{Bartosz Ko\l{}odziejek}
\ead{bartosz.kolodziejek@pw.edu.pl}
\author[inst4,inst6]{Tomasz Skalski \corref{cor1}}
\ead{tomasz.skalski@pwr.edu.pl}
\author[inst3]{Patrick Tardivel}
\ead{patrick.tardivel@u-bourgogne.fr}
\author[inst6]{Maciej Wilczy\'nski}
\ead{maciej.wilczynski@pwr.edu.pl}

\affiliation[inst1]{organization={Institute of Mathematics,
University of Wroc\l{}aw},
            addressline={pl. Grunwaldzki 2/4}, 
            city={Wroc\l{}aw},
            postcode={50-384}, 
            country={Poland}}

\affiliation[inst2]{organization={Department of Statistics, Lund University},
            addressline={Holger Crafoords Ekonomicentrum 1, Tycho Brahes väg 1}, 
            city={Lund},
            postcode={SE-220 07}, 
            country={Sweden}}
            
\affiliation[inst3]{organization={Université Bourgogne Europe, CNRS, IMB UMR 5584},
            city={21000 Dijon},
            country={France}}

\affiliation[inst4]{organization={Laboratoire de Mathématiques LAREMA},
            addressline={Université d’Angers, 2 Boulevard Lavoisier}, 
            city={Angers},
            postcode={49045}, 
            country={France}}

\affiliation[inst5]{organization={Faculty of Mathematics and Information Science,
Warsaw University of Technology},
            addressline={Koszykowa 75}, 
            city={Warsaw},
            postcode={00-662}, 
            country={Poland}}

\affiliation[inst6]{organization={Faculty of Pure and Applied Mathematics},
            addressline={Wroc\l{}aw University of Science and Technology, Wybrzeże Wyspiańskiego 27}, 
            city={Wroc\l{}aw},
            postcode={50-370}, 
            country={Poland}}

\begin{abstract}
SLOPE is a popular method for dimensionality reduction in high-dimensional regression. Its estimated coefficients can be zero, yielding sparsity, or equal in absolute value, yielding clustering. As a result, SLOPE can eliminate irrelevant predictors and identify groups of predictors that have the same influence on the response. The concept of the SLOPE pattern allows us to formalize and study its sparsity and clustering properties. In particular, the SLOPE pattern of a coefficient vector captures the signs of its components (positive, negative, or zero), the clusters (groups of coefficients with the same absolute value), and the ranking of those clusters.\\
This is the first paper to thoroughly investigate the consistency of the SLOPE pattern. We establish necessary and sufficient conditions for SLOPE pattern recovery, which in turn enable the derivation of an irrepresentability condition for SLOPE given a fixed design matrix $X$. These results lay the groundwork for a comprehensive asymptotic analysis of SLOPE pattern consistency.

\end{abstract}

\begin{keyword}
linear regression \sep SLOPE \sep pattern recovery \sep irrepresentability condition
\MSC 62J05 \sep 62J07
\end{keyword}

\end{frontmatter}

\section{Introduction}

High-dimensional data is currently ubiquitous in many areas of science and industry. Efficient extraction of information from such data sets often requires dimensionality reduction based on identifying the low-dimensional structure behind the data generation process. In this article we focus on a particular statistical model describing the data: the  linear regression model
\begin{equation}\label{eq:regmodel}
	\Y=\X\beta+\beps,
\end{equation}
where $Y\in \R^n$ is a vector of responses,  $X\in \R^{n\times p}$ is a design matrix, $\beta\in \R^p$ is an unknown vector 
of regression coefficients and $\varepsilon\in \R^n$ is a random noise.

It is well-known that  the classical least squares estimator of $\beta$ is BLUE (the best linear unbiased estimator) when the design matrix $X$ is of full column rank. However, it is also well-known that this estimator often exhibits a large variance and a large mean squared estimation error, especially when $p$ is large or when the columns of $X$ are strongly correlated. Moreover, it is not uniquely determined when $p>n$. Therefore, scientists often resort to the penalized least squares estimators of the form,
\begin{equation}\label{eq:pen}
	\hat \beta=\argmin_{b\in \R^p} \left\{\|\Y-\X b\|_2^2+ C \,\mathrm{pen} (b)\right\},
\end{equation}
where $C>0$ and $\mathrm{pen}$ is the penalty on the model complexity. Typical examples of the penalties include $\mathrm{pen}(\beta)=\ell_0(\beta)=\#\{i\colon\beta_i\neq 0\}$, which appears in popular model selection criteria such as AIC \citep{Aka}, BIC \citep{Schw}, RIC \citep{RIC}, mBIC \citep{mBIC} or EBIC \citep{CC08}, or the $\ell_2$ or $\ell_1$ norms, resulting in famous ridge \citep{ridge1, ridge2} or LASSO \citep{chen1994basis,tibshirani1996regression} estimators.
In cases where the penalty function is not differentiable, penalized estimators usually possess
the dimensionality reduction properties as illustrated \textit{e.g.} in \citep{vaiter}. For instance,  
LASSO   may yield some zero components \citep{osborne2000lasso,Tibshirani13}
and thus its dimensionality reduction property is straightforward:  elimination of irrelevant predictors. 

However, in a variety of applications one is interested not only in eliminating variables which are not important but also in merging similar values of regression coefficients. The  prominent statistical example is the multiple regression with categorical variables at many levels, where one may substantially reduce the model dimension and improve the estimation and prediction properties by merging regression coefficients corresponding to ``similar'' levels (see \textit{e.g.} \citep{CASANOVA,  Bayes2, DMR, Bayes1, SCOPE}). Another well-known example of advantages resulting from merging different model parameters are modern Convolutional Neural Networks (CNN), where the ``parameter sharing'' has allowed to ``dramatically lower the number of unique model parameters and to significantly increase network sizes without requiring a corresponding increase in training data'' \cite{DLbook}.

In this article, we explore the dimensionality reduction properties of the well-known convex optimization method, the Sorted L-One Penalized Estimator (SLOPE) \citep{SLOPE1, bogdan2015slope, zeng2014decreasing}. SLOPE has gained considerable attention due to its rich statistical properties (see, \textit{e.g.}, \citep{bogdan2015slope, geneSLOPE, grpSLOPE, Kos2020} for false discovery rate control under various settings, and \citep{Felix, Bellec, su2016} for results on the minimax rates of estimation and prediction).

Following \citep{SLOPE1, bogdan2015slope}, we define the SLOPE estimator as the solution to the optimization problem
\begin{equation}\label{eq:SLOPE}
    \min_{b \in \mathbb{R}^p} \left\{ \frac{1}{2} \|Y - X b\|_2^2 + \sum_{i=1}^p \lambda_i |b|_{(i)} \right\},
\end{equation}
where $|b|_{(1)} \ge |b|_{(2)} \ge \dots \ge |b|_{(p)}$ denote the absolute values of the components of $b$ sorted in nonincreasing order, and $\Lambda = (\lambda_1, \dots, \lambda_p)'$ is a sequence of tuning parameters satisfying $\lambda_1 > 0$ and $\lambda_1 \ge \lambda_2 \ge \dots \ge \lambda_p \ge 0$.

 The SLOPE estimator is arguably the most significant penalized estimator developed in recent years. It can be viewed as an extension of the Octagonal Shrinkage and Clustering Algorithm for Regression (OSCAR) \citep{bondell2008simultaneous}, where the tuning parameter $\Lambda$ has components that decrease arithmetically. It is also closely related to the Pairwise Absolute Clustering and Sparsity (PACS) method \citep{sharma2013consistent}. In this context, the term ``clustering'' reflects the fact that some components of the OSCAR, PACS, and SLOPE estimators may have the same absolute value, while the terms ``sparsity'' and ``shrinkage'' indicate that some components of these estimators can be exactly zero.

SLOPE is also an extension of LASSO whose penalty term is $\lambda\|\cdot\|_1$ ({\it i.e.}, when $\Lambda=(\lambda,\ldots,\lambda)'$  with $\lambda>0$).
Note that contrarily to SLOPE with a decreasing sequence $\Lambda$,
LASSO does not exhibit clusters.
Clustering and sparsity properties for both OSCAR and SLOPE are intuitively illustrated by drawing the elliptic contour lines of the residual
sum of squares $b \mapsto \|Y - Xb\|_2^2$ (when $\ker(X)=\{0\}$)
together with the balls of the sorted $\ell_1$ norm (see, \textit{e.g.},
Figure~2 in \citep{bondell2008simultaneous}, Figure~1 in
\citep{zeng2014decreasing} or Figure~3 in \citep{kremer2020sparse}). 
Known theoretical properties of SLOPE include its ability to  cluster correlated predictors \citep{bondell2008simultaneous,figueiredo2016ordered},  as well as predictors with a similar influence on the $L_2$ loss function \citep{kremer2019sparse}. Specifically,
when $X$ is orthogonal, SLOPE may also cluster components of $\beta$ equal in absolute value \citep{Skalski_2022}. Therefore, dimensionality reduction properties of SLOPE are due to  elimination of irrelevant predictors and grouping predictors having the same influence on $Y$.  Note that, contrary to fused LASSO \citep{tibshirani2005sparsity}, a cluster for SLOPE does not have, in broad generality, adjacent components. 

The clustering properties of SLOPE offer several advantages. One of the most important is its ability to reduce the problem’s dimensionality from $p$ to the number of clusters, thereby lowering variance and enhancing the stability of the estimator. The practical benefits of these clustering effects have been demonstrated, for example, in \citep{kremer2019sparse}, where SLOPE proved effective for sparse portfolio selection. In this setting, SLOPE regularization not only yields sparse and well-diversified portfolios but also improves out-of-sample performance and reduces trading costs by minimizing portfolio turnover. Unlike LASSO, which encourages sparsity but may inconsistently handle similar assets, SLOPE promotes the grouping of assets with comparable risk–return profiles, resulting in portfolios that are both sparse and structured.

These theoretical and practical strengths highlight the importance of developing a rigorous mathematical foundation for the clustering properties of the SLOPE estimator — a goal this article seeks to accomplish.

The key concept for analyzing the clustering properties of SLOPE is the SLOPE pattern, which was first introduced in \citep{schneider2022geometry}. It allows to describe the structure (sparsity and clusters) induced by SLOPE. The SLOPE pattern extracts from a given vector:
\begin{itemize}
	\item[a)] The sign of each component (positive, negative, or zero), 
	\item[b)] The clusters (\textit{i.e.}, indices of components with equal absolute values),
	\item[c)] The hierarchy among the clusters. 
\end{itemize}
The notion of a SLOPE pattern is stronger 
and substantially more informative
than various other structures, such as the model subspace  \citep{vaiter,vaiter2017model} or the sets of irrelevant or clustered components \citep{sharma2013consistent}. Specifically, two vectors that share the same SLOPE pattern also share the same model subspace and have identical sets of zero components as well as components equal in absolute value.

Note that for a given regression model (\ref{eq:regmodel}) the SLOPE pattern depends on relative scaling of different variables.
In the situations where there are no clear reasons or rules for selection of specific measurement units,  we suggest defining the SLOPE pattern with respect to the standardized design matrix. Note that standardizing explanatory variables is also a standard solution for a similar problem of scale-dependent definition of principal components in PCA.

This article focuses on recovering the 
pattern of $\beta$ by SLOPE.  
From a mathematical perspective, the main result is Theorem \ref{th:PT+}, which specifies two conditions (named positivity and subdifferential conditions)  characterizing  pattern recovery by SLOPE in both noisy and noiseless settings. A byproduct of Theorem \ref{th:PT+} is the SLOPE irrepresentability condition: a necessary and sufficient condition for pattern recovery in the noiseless case.  
The word ``irrepresentability'' is a tribute to works written a decade ago on sign recovery by LASSO \citep{fuchs2004sparse,meinshausen2006high,wainwright2009sharp,zhao,zou}. 
However,  when deriving the irrepresentability condition for SLOPE we developed a substantially different mathematical framework, which paves the way for similar analyses of other penalized estimators. Even in the case of the LASSO (see Remark \ref{rem:LASSO}), the sign recovery characterization provided by Theorem \ref{th:PT+} is new and could simplify the proofs of well-known results regarding the LASSO irrepresentability condition. Furthermore, Theorem \ref{th:PT+} provides a sufficient, though not necessary, condition under which a SLOPE solution shares the same model subspace for the sorted $\ell_1$ norm as $\beta$, and correctly identifies the sets of irrelevant or clustered components of $\beta$.
In this way we strengthen the results of
\citep{vaiter,vaiter2017model,sharma2013consistent}.
Finally, the proposed positivity and subdifferential conditions are crucial in developing an algorithm for computing the solution path of SLOPE \citep{dupuis2024solution} or to study pattern recovery by proximal-thresholded SLOPE \cite[Theorem 2.2]{tardivel2024etude}.

In Theorem \ref{th:bound} we consider a noisy case and  under the open SLOPE irrepresentability condition (a condition slightly stronger than the SLOPE irrepresentability condition) we prove that the probability of pattern recovery by  SLOPE tends to $1$ as soon as $X$ is fixed and gaps between distinct absolute values of $\beta$ diverge to infinity.  Additionally, in Theorems  \ref{th:bound_n} and \ref{th:asymp sufficient} we apply the SLOPE irrepresentability condition to derive results on the asymptotic pattern recovery by SLOPE when the number of variables $p$ is fixed and the sample size $n$ diverges to infinity.

While the SLOPE ability to identify the pattern of the vector of regression coefficients $\beta$ is interesting by itself, the related reduction of model dimension also brings an advantage in terms of precision of $\beta$ estimation. This phenomenon is  illustrated in Figure \ref{fig:introduction}, which presents the difference in  precision of LASSO, Fused LASSO and SLOPE estimators, when some of the regression coefficients are equal to each other. 
In this example $n=100$, $p=200$, and the rows of the design matrix are generated as  independent binary Markov chains, with $\mathbb{P}(X_{i1}=1)=\mathbb{P}(X_{i1}=-1)=0.5$ and $\mathbb{P}(X_{i(j+1)}\neq X_{ij})=1-\mathbb{P}(X_{i(j+1)}= X_{ij})=0.0476$.
This value corresponds to the probability of the crossover event between genetic markers spaced every 5 centimorgans and our design matrix can be viewed as an example of 100 independent haplotypes, each resulting from a single meiosis event. In this example, the correlation between columns of the design matrix decays exponentially, $\rho(X_{\cdot i},X_{\cdot j})=0.9048^{|i-j|}$. The design matrix is then standardized, so that each column has a zero mean and a unit variance, and the response variable is generated according to the linear model \eqref{eq:regmodel} with $\beta_1=\ldots=\beta_{30}=40$, $\beta_{31}=\ldots=\beta_{200}=0$ and $\sigma=5$. 
In this experiment the data matrix $X$ and the regression model are constructed such that the LASSO irrepresentability condition holds.
The tuning parameter for LASSO is selected as the smallest value of $\lambda$ for which LASSO can properly identify the sign of $\beta$.
Similarly, the  tuning parameter $\Lambda$ is designed such that the SLOPE irrepresentability condition holds and $\Lambda$ is multiplied by the smallest constant for which SLOPE properly returns the SLOPE pattern. The selected tuning parameters for LASSO and SLOPE are represented in the left panel of Figure \ref{fig:introduction}. For both LASSO and SLOPE, the proposed tuning parameters are close to the values minimizing the mean squared estimation error. The fused LASSO was performed using the {\it fusedlasso} function from the {\it genlasso} library in {\it R}. The tuning parameters $\lambda$ and $\gamma$ were manually selected, so as to minimize the mean squared estimation error. 
Since in this example all methods properly estimate the null components of $\beta$, the right panel in Figure \ref{fig:introduction} illustrates only the accuracy of the estimation of the nonzero coefficients. 
Here  we can observe that the SLOPE ability to identify the cluster structure leads to superior estimation properties. 
SLOPE estimates the regression coefficient vector $\beta$ with virtually no error, while the LASSO estimates range roughly between 36 and 44, and the Fused LASSO estimates fall between approximately 38.8 and 41.9. As a result, the squared error of the SLOPE estimate is more than twenty times smaller than that of the Fused LASSO, and over 100 times smaller than that of the LASSO (0.53 vs 13.25 vs 63.4).

\begin{figure}
	\centering
	\includegraphics[scale=0.6]{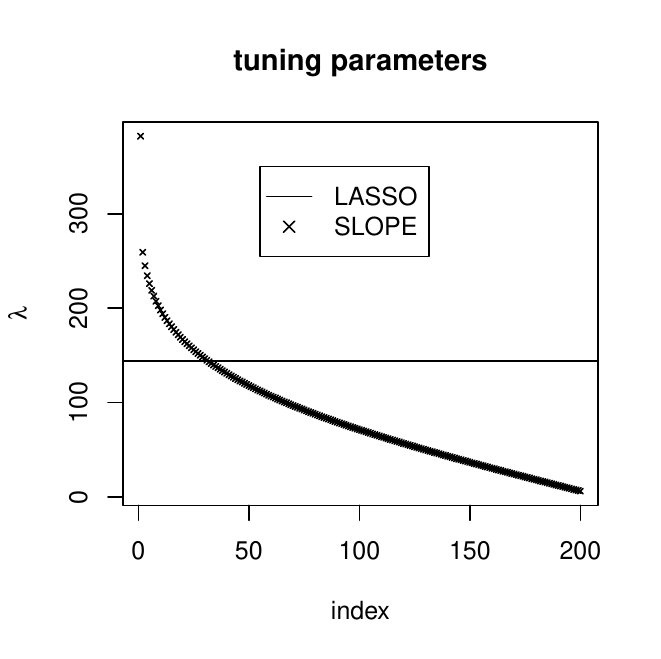}
	\includegraphics[scale=0.6]{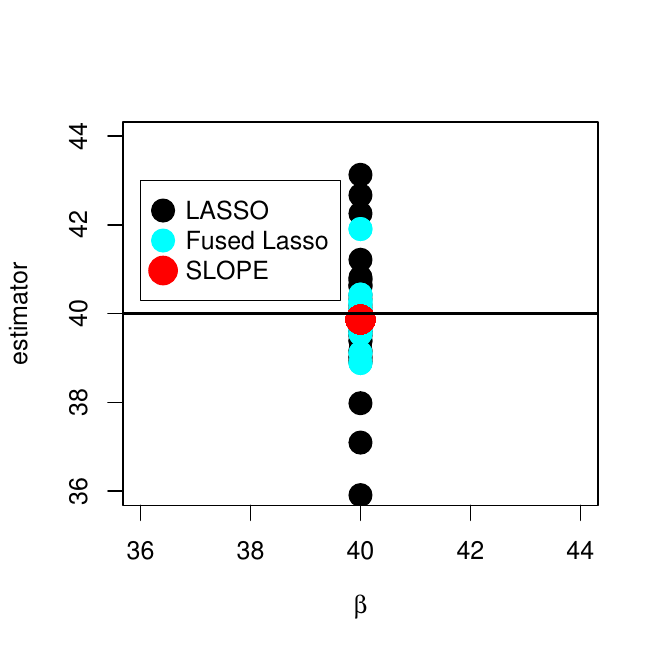}
	\caption{Comparison of LASSO, Fused LASSO and SLOPE when the cluster structure is present in the data. Here $n=100$, $p=200$, the rows of $X$ matrix are simulated as independent binary Markov chains, with the transition probability 0.0476 (corresponding to 5 centimorgans genetic distance). The  correlation between $i^{th}$ and $j^{th}$ column of $X$ decays exponentially as $0.9048^{|i-j|}$.  First $k=30$ columns of $X$ are associated with $Y$ and their nonzero regression coefficients are all equal to 40 (other details are provided in the text).  Left panel represents the value of the tuning parameter for LASSO (solid line) and the sequence of tuning parameters for SLOPE (crosses). The sequences are selected such that both LASSO and SLOPE recover their corresponding patterns with a minimal bias. Right panel represents LASSO, Fused LASSO and SLOPE estimates. The tuning parameters for Fused LASSO were selected manually as to minimize the estimation error.}
	\label{fig:introduction}
\end{figure}
\subsection{Structure of the paper}

 Section \ref{sec:preliminaries} introduces the concept of the SLOPE pattern, which captures the sparsity and clustering structure in the regression coefficients. It also defines key mathematical tools such as the pattern matrix, clustered design matrix, and subdifferential of the sorted $\ell_1$ norm.
 
 Theorem 3.1, the main result of Section \ref{sec:charact}, provides necessary and sufficient conditions for exact SLOPE pattern recovery in both noisy and noiseless settings.
This extends previous work by introducing a ``SLOPE irrepresentability  condition,'' generalizing the LASSO irrepresentability framework.
Even for the LASSO case (see Remark \ref{rem:LASSO}), the sign recovery characterization in Theorem \ref{th:PT+} is new and may simplify existing proofs.

Section \ref{sec:noisy} investigates the conditions under which SLOPE  recovers the true pattern, either as the signal strength increases or as the sample size grows, given appropriate tuning.
It introduces the notion of open irrepresentability, a stronger version of the standard irrepresentability condition, which guarantees asymptotic pattern recovery in high-dimensional settings.

Section \ref{sec:simulations} presents simulation studies that validate the theoretical results from Sections 3 and 4.
The simulations demonstrate that appropriate tuning of the SLOPE penalty yields high probabilities of correct pattern recovery, and they show that SLOPE outperforms LASSO and Fused LASSO when the true regression vector exhibits clustering.

We conclude the paper with a discussion in Section \ref{sec:discussion}. 

The appendix provides detailed  proofs of the main theorems. It also includes auxiliary results such as a law of iterated logarithm for strong consistency and computational verifications of irrepresentability conditions.

\section{Preliminaries and basic notions on clustering properties by SLOPE} \label{sec:preliminaries}
The SLOPE pattern, whose definition is recalled hereafter, is the central notion in this article.
\begin{definition}
	\label{def:SLOPE_pattern}
	Let $b \in \R^p$.  
	The SLOPE pattern of $b$,  $\model(b)$, is defined by
	$$
	\model(b)_i=
	\sign(b_i)\,\mathrm{rank}(|b|)_i, \quad \forall i \in \{1,\dots,p\}
	$$
	where $\mathrm{rank}(|b|)_i\in \{0,1,\dots,k\}$, $k$ is the number of nonzero distinct values in $\{|b_1|,\dots, |b_p|\}$,
	$\mathrm{rank}(|b|)_i=0$ if $b_i=0$,  $\mathrm{rank}(|b|)_i>0$ if $|b_i|>0$ and $\mathrm{rank}(|b|)_i< \mathrm{rank}(|b|)_j$
	if $|b_i|< |b_j|$.
\end{definition}
\ \\

We denote  by $\mathcal{P}^{\rm SLOPE}_p=\model(\R^p)$ the set of SLOPE patterns.
\begin{example}\ \\
	For $a=(4.7,-4.7,0,1.8,4.7,-1.8)'$ we have $\model(a)=(2,-2,0,1,2,-1)'$. For $b=(1.2,-2.3,3.5,1.2,2.3,-3.5)'$ we have $\model(b)=(1,-2,3,1,2,-3)'$.
\end{example}

\begin{definition}	\label{SLOPE pattern matrix}
	Let $0\neq M=(M_1,\ldots,M_p)'\in \mathcal{P}^{\rm SLOPE}_p$ with  $k=\|M\|_\infty$ nonzero clusters.
	The pattern matrix $\U_M\in \R^{p\times k}$ is defined as follows
	\[
	(\U_M)_{ij}=\sign(M_i){\bf 1}_{(|M_i|=k+1-j)},\qquad i \in \{1,\ldots,p\}, \,j \in \{1,\ldots,k\}.
	\]
\end{definition}
Hereafter, the notation $|M|_{\downarrow}=(|M|_{(1)},\ldots,|M|_{(p)})'$ represents the components of $M$ ordered non-increasingly by absolute value.  
\begin{example}
	If $M=(-2,1,0,-1,2)'$, then
	\[
	U_M = \begin{pmatrix}
		-1 & 0 & 0 & 0 & 1 \\ 0 & 1 & 0 & -1 & 0
	\end{pmatrix}' \mbox{ and }U_{|M|_{\downarrow}} = \begin{pmatrix}
		1 & 1 & 0 & 0 & 0 \\ 0 & 0 & 1 & 1 & 0
	\end{pmatrix}'.
	\]
\end{example}
For $k\ge 1$ we denote by
$
\R^{k+}=\{\kappa\in \R^k\colon \kappa_1>\ldots > \kappa_k>0\}.
$
Definition \ref{SLOPE pattern matrix} implies 
that for $0\neq M\in\mathcal{P}^{\rm SLOPE}_p$ and $k=\|M\|_\infty$, for $b\in \R^p$ we have 
$$
\model(b)=M \iff 	\mbox{there exists $\kappa\in\R^{k+}$ such that }b = \U_M \kappa.
$$

\subsection{\texorpdfstring{Clustered  matrix $\tilde X_M$ and clustered parameter $\tilde \Lambda_M$}{Clustered  matrix tildeXM and clustered parameter tildeLambdaM}}

\begin{definition}
	Let  $X\in \R^{n\times p}$, $\Lambda\in \R^{p+}$ and $M\in \mathcal{P}^{\rm SLOPE}_p$.
	The clustered matrix is defined by $\tilde X_M=XU_M$.
	The clustered parameter is defined by $\tilde \Lambda_M= (U_{|M|_{\downarrow}})'\Lambda$. 
\end{definition}

If $M=\model(\beta)$ for $\beta\in\R^p$ satisfies  $\|M\|_\infty<p$, then the pattern $M=(M_1,\ldots,M_p)'$
leads naturally to reduce the dimension of the design matrix $\X$ in the regression problem, by replacing $X$ by
$\tilde{\X}_M$. Actually, if $\model(\beta)=M$, then $\X \beta =\X \U_M\kappa = \tilde{\X}_M \kappa$ for $\kappa\in \R^{k+}$. In particular,
\begin{enumerate}[(i)]
	\item null components $M_i=0$ lead to discard the column $\X_i$ from the design matrix $\X$,
	\item a cluster $K\subset \{1,\ldots,p\}$ of $M$ (components of $M$ equal in absolute value) leads to  replacing the  columns
	$(\X_i)_{i\in K}$ by one column equal to the signed sum: $\sum\limits_{i\in K}\sign(M_i)\X_i$. 
\end{enumerate}

\begin{example}
	Let $X=(X_1|X_2|X_3|X_4|X_5)$, $M=(1,2,-2,0,1)'$ and $\Lambda=(\lambda_1,\lambda_2,\lambda_3,\lambda_4,\lambda_5)'\in \R^{5+}$. Then the clustered matrix and the clustered parameter are given hereafter:
	$$
	\tilde{X}_M = (X_2-X_3|X_1+X_5) \mbox{ and }
	\tilde \Lambda_M = \begin{pmatrix}
		\lambda_1+\lambda_2\\
		\lambda_3+\lambda_4
	\end{pmatrix}.
	$$
\end{example}

\subsection{\texorpdfstring{Sorted $\ell_1$ norm, dual sorted $\ell_1$ norm and  subdifferential}{Sorted L1 norm, dual sorted L1 norm and  subdifferential}}

The sorted $\ell_1$ norm is defined as follows:
\[ 
J_\Lambda(b)=\sum_{i=1}^{p}\lambda_i|b|_{(i)},\quad  b \in \R^p,
\]
where $|b|_{(1)}\ge \ldots \ge |b|_{(p)}$ are the sorted components of $b$ with respect to the absolute value. Given a norm  $\|\cdot\|$ on $\R^p$, we recall that the dual norm $\|\cdot\|^*$    is defined by $\|b\|^*=\max\{v'b\colon\|v\|\le 1\}$ for any $b\in \R^p$. In particular, 
the dual sorted $\ell_1$ norm has an explicit expression given in \citep{negrinho2014orbit} and recalled hereafter:  
\begin{equation}
    \label{def:dual_norm}J^*_\Lambda(b)=\max\left\{\frac{|b|_{(1)}}{\lambda_1},\frac{\sum_{i=1}^{2}|b|_{(i)}}{\sum_{i=1}^{2}\lambda_i},\ldots,\frac{\sum_{i=1}^{p}|b|_{(i)}}{\sum_{i=1}^{p}\lambda_i}\right\},\quad  b\in \R^p.
\end{equation} 
Related to the dual norm,  the subdifferential of a norm $\|\cdot\|$ at $b$ is recalled below (see {\it e.g.} \citep{hiriart2004fundamentals} pages 167 and 180)
\begin{eqnarray}\partial{\|\cdot\|}(b)
	\nonumber	&=&\{v\in \R^p\colon \|z\| \ge \|b\|+v'(z-b)\;\, \forall\, z\in \R^p\}, \\
	\label{eq:subdiff_norm}	&=& \{v\in \R^p\colon \|v\|^*\le 1 \mbox{ and }v'b=\|b\|\}. 
\end{eqnarray}
For the sorted $\ell_1$ norm, 
geometric descriptions of the  subdifferential at  $b\in \R^p$ have been given  in the particular case where  $b_1\ge \ldots \ge b_p\ge 0$
\citep{XDpt,schneider2022geometry,tardivel2020simple}. Hereafter, for an arbitrary $b\in \R^p$,
Proposition \ref{prop:affine} provides a new and useful  formula for the subdifferential of the sorted $\ell_1$ norm. This representation is the crux of the mathematical content of the present paper. 
\begin{proposition}
	\label{prop:affine}
	Let $b\in \R^p$ and $M=\model(b)$. Then we have the following formula:
	\begin{equation}
		\label{eq:aff}
		\partial{J_\Lambda}(b)
		=  \left\{ v\in\R^{p}: J^*_\Lambda(v)\le 1 \mbox{ and } U_{M}' v =  \tilde \Lambda_M \right\}.
	\end{equation}
\end{proposition}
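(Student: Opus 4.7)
The starting point is the identity $\partial J_\Lambda(b)=\{v:J_\Lambda^*(v)\le 1,\ v'b=J_\Lambda(b)\}$ from \eqref{eq:subdiff_norm}. So the goal reduces to showing that, under the constraint $J_\Lambda^*(v)\le 1$, the single scalar equation $v'b=J_\Lambda(b)$ is equivalent to the $k$ scalar equations $U_M'v=\tilde\Lambda_M$.

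First I would rewrite both sides of $v'b=J_\Lambda(b)$ in terms of $\kappa$. Writing $b=U_M\kappa$ with $\kappa\in\R^{k+}$, one immediately gets $v'b=(U_M'v)'\kappa$. For the right-hand side, since the sorted vector $|b|_\downarrow$ repeats $\kappa_s$ exactly $n_s$ times (where $n_s$ is the size of the $s$-th cluster), one has $|b|_\downarrow=U_{|M|_\downarrow}\kappa$, hence $J_\Lambda(b)=\Lambda'|b|_\downarrow=\tilde\Lambda_M'\kappa$. The $\supseteq$ inclusion is then immediate: if $U_M'v=\tilde\Lambda_M$ and $J_\Lambda^*(v)\le 1$, then $v'b=\tilde\Lambda_M'\kappa=J_\Lambda(b)$.

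The heart of the argument is the $\subseteq$ inclusion. Here I have one equation, $(U_M'v-\tilde\Lambda_M)'\kappa=0$, and I need $k$ equations. The extra information must come from $J_\Lambda^*(v)\le 1$, which unpacks (from the explicit formula for $J_\Lambda^*$) as
\begin{equation*}
\sum_{i=1}^{j}|v|_{(i)}\le \sum_{i=1}^{j}\lambda_i\quad \text{for every } j=1,\dots,p.
\end{equation*}
Let $K_1,\dots,K_k$ be the clusters of $M$ ordered by decreasing absolute value, set $N_s=n_1+\cdots+n_s$, and write $a_s=(U_M'v)_s$, $b_s=(\tilde\Lambda_M)_s$. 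I would combine the trivial bound $|a_s|\le\sum_{i\in K_s}|v_i|$ with the sorting inequality $\sum_{s'\le s}\sum_{i\in K_{s'}}|v_i|\le\sum_{j=1}^{N_s}|v|_{(j)}$ and the dual-norm inequality above applied at $j=N_s$, which yields the cumulative control
\begin{equation*}
\sum_{s'=1}^{s}|a_{s'}|\ \le\ \sum_{s'=1}^{s}b_{s'},\qquad s=1,\dots,k.
\end{equation*}

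The final step is an Abel summation. Setting $A_s=\sum_{s'\le s}a_{s'}$ and $B_s=\sum_{s'\le s}b_{s'}$, the equation $\sum_s a_s\kappa_s=\sum_s b_s\kappa_s$ rewrites, with $\kappa_{k+1}:=0$, as
\begin{equation*}
\sum_{s=1}^{k}A_s(\kappa_s-\kappa_{s+1})=\sum_{s=1}^{k}B_s(\kappa_s-\kappa_{s+1}).
\end{equation*}
Because $\kappa\in\R^{k+}$ each increment $\kappa_s-\kappa_{s+1}$ is strictly positive, and the cumulative bound gives $A_s\le|A_s|\le B_s$, so equality of the weighted sums forces $A_s=B_s$ for every $s$, and by differencing $a_s=b_s$, i.e.\ $U_M'v=\tilde\Lambda_M$. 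The main obstacle is precisely this last step: extracting the $k$ equalities from a single weighted equality plus $k$ cumulative inequalities, which is exactly what the strict ordering $\kappa_1>\cdots>\kappa_k>0$ encoded in the definition of the pattern makes possible.
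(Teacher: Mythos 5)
Your proof is correct and takes essentially the same route as the paper's: both reduce to the scalar identity $v'b=J_\Lambda(b)$ via \eqref{eq:subdiff_norm}, derive the cumulative bounds $\sum_{s'\le s}(U_M'v)_{s'}\le\sum_{s'\le s}(\tilde\Lambda_M)_{s'}$ from the dual-norm constraint, and then use an Abel summation against the strictly decreasing $\kappa$ to force all partial sums to coincide. The only (trivial) omission is the degenerate case $b=0$, where the claim is immediate from \eqref{eq:subdiff_norm}.
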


In Proposition \ref{pro:simple} we derive a simple characterization of elements in $\partial{J_\Lambda}(b)$.
The notion of SLOPE pattern is  related  to the subdifferential via the following result.
\begin{proposition}
	\label{prop:subdiff_char} Let $\Lambda=(\lambda_1,\ldots,\lambda_p)'$ where $\lambda_1>\ldots>\lambda_p>0$ and 
	$a,b\in\R^p$. We have    $\model(a)=\model(b)$ if and only if
	$\partial{J_\Lambda}(a)= \partial{J_\Lambda}(b)$.
\end{proposition}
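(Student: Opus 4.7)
The forward implication is immediate from Proposition \ref{prop:affine}: the right-hand side of \eqref{eq:aff} depends on $b$ only through $\model(b)$, so $\model(a) = \model(b)$ gives $\partial J_\Lambda(a) = \partial J_\Lambda(b)$.

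For the converse, assume $\partial J_\Lambda(a) = \partial J_\Lambda(b)$ and write $M_a = \model(a)$, $M_b = \model(b)$. The case $a=0$ is dispatched quickly: $\partial J_\Lambda(0)$ is the whole dual unit ball (Proposition \ref{prop:affine} with the empty affine constraint), so $0 \in \partial J_\Lambda(b)$, and the identity $v'b = J_\Lambda(b)$ valid on the subdifferential then gives $J_\Lambda(b) = 0$, i.e.\ $b = 0$. Hence $M_a = M_b$. So I may assume $a,b \neq 0$.

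The crux is to show that the affine hull of $\partial J_\Lambda(b)$ equals the full affine subspace $A_{M_b} = \{v \in \R^p : U_{M_b}' v = \tilde\Lambda_{M_b}\}$; equivalently, that $\partial J_\Lambda(b)$ has dimension $p - \|M_b\|_\infty$ inside $A_{M_b}$. My plan is to exhibit sufficiently many affinely independent points in $\partial J_\Lambda(b)$. Using the signed-permutation invariance of $J_\Lambda$ I would reduce to $b = U_{M_b}\kappa$ with decreasing nonnegative blocks, fix a canonical $v^\star \in \partial J_\Lambda(b)$ built from the sort structure, and then perturb $v^\star$ along $\ker(U_{M_b}')$. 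The essential point is that the strict inequalities $\lambda_1 > \ldots > \lambda_p > 0$ leave strict slack in every partial-sum constraint of $J^*_\Lambda(v) \le 1$ except those that are forced to be tight by the cluster structure of $b$, and these tight constraints are precisely captured by $U_{M_b}' v = \tilde\Lambda_{M_b}$. This is where I expect the main difficulty and the step I would check most carefully.

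Granted the affine-hull claim, the equality $\partial J_\Lambda(a) = \partial J_\Lambda(b)$ propagates to $A_{M_a} = A_{M_b}$, hence $\col(U_{M_a}) = \col(U_{M_b})$. Since the columns of any SLOPE pattern matrix are $\pm 1$-valued vectors with pairwise disjoint supports, this common column space admits a unique such atomic decomposition up to column permutation and sign flip, so the unordered cluster partition and the relative signs within each cluster are pinned down, and we may write $U_{M_a} = U_{M_b}\,P D$ for some permutation $P$ and $\pm 1$ diagonal $D$. Equating the two affine equations yields $\tilde\Lambda_{M_b} = P D\,\tilde\Lambda_{M_a}$. All entries of $\tilde\Lambda_M$ are strictly positive sums of $\lambda_i$'s, forcing $D = I$ and fixing the global sign of each cluster; then $\tilde\Lambda_{M_b} = P\,\tilde\Lambda_{M_a}$ combined with $\lambda_1 > \ldots > \lambda_p$ forces $P = \mathrm{Id}$ by an induction starting from the top cluster, since two sums of the same number of consecutive $\lambda_i$'s coincide only when they occupy the same positions. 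This gives $M_a = M_b$ and completes the proof.
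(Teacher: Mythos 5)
Your forward direction and your treatment of the zero case are fine, and the endgame (recovering the cluster partition and the within-cluster signs from $\col(U_{M_a})=\col(U_{M_b})$ via the signed-permutation rigidity of disjointly supported $\pm1$ columns, then forcing the permutation to be the identity from $\tilde\Lambda_{M_b}=PD\,\tilde\Lambda_{M_a}$ together with the strict decrease of $\Lambda$) is correct as far as it goes. But the proof has a genuine gap exactly where you flag it: everything downstream rests on the claim that $\aff(\partial{J_\Lambda}(b))$ is the \emph{whole} affine subspace $\{v\in\R^p:U_{M_b}'v=\tilde\Lambda_{M_b}\}$, and you only describe a plan for proving it (``exhibit sufficiently many affinely independent points'', ``this is where I expect the main difficulty''). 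Without that claim, the hypothesis $\partial{J_\Lambda}(a)=\partial{J_\Lambda}(b)$ --- an equality of two convex sets, each cut out by the dual unit ball intersected with an affine subspace --- does not let you equate the affine subspaces themselves, and the rest of your argument cannot start. The claim is true (it is Lemma \ref{lemma:affine_space} of the paper, whose proof in turn delegates the dimension count to Theorem 4 of \cite{schneider2020geometry}), but it is not a routine verification: one must check that at a suitable base point every partial-sum constraint in $J^*_\Lambda(v)\le 1$, other than those indexed by the cumulative cluster sizes, has strict slack, and that perturbations along $\ker(U_{M_b}')$ can realize all $p-\|M_b\|_\infty$ directions without violating the remaining constraints. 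As submitted, the proof is incomplete at its load-bearing step.

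For comparison, the paper's own proof bypasses the dimension computation entirely: it reduces to the sorted case $M=|M|_{\downarrow}$ by a signed permutation, uses Lemma \ref{lem:small_lemma} (namely $\Lambda\in\partial{J_\Lambda}(b)$ forces $b_1\ge\cdots\ge b_p\ge0$) to align the two patterns, and then separates any two distinct candidate patterns by explicit test vectors $\check\pi$ (replacing $\lambda_p$ by $0$, or swapping $\lambda_i$ and $\lambda_{i+1}$) that belong to one subdifferential but not the other. If you wish to keep your route, you should either prove the affine-hull lemma in full or cite it explicitly; otherwise the direct separating-vector argument is the more economical path.
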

A  proof of Proposition
\ref{prop:subdiff_char} can be found in \citep{schneider2022geometry}. In the Appendix, we provide an independent proof, which is based on Proposition \ref{prop:affine}.

From now on, to comply with Proposition \ref{prop:subdiff_char}, we assume that the tuning parameter  $\Lambda=(\lambda_1,\ldots,\lambda_p)'$ satisfies  
\[
\lambda_1>\ldots>\lambda_p>0.
\]

\subsection{Characterization of  SLOPE minimizers}
The SLOPE estimator is a minimizer of the following optimization problem:
\begin{equation}
	\label{eq:SLOPE_problem}
	S_{X,\Lambda}(Y)=\argmin_{b\in \R^p} \left\{\frac{1}{2}\|Y-Xb\|_2^2+  J_\Lambda(b)\right\}.
\end{equation}
In this article we do not assume that  $S_{X,\Lambda}(Y)$ contains a unique element and potentially $S_{X,\Lambda}(Y)$ can be a non-trivial compact and convex set.
Note however that
cases in which  $S_{X,\Lambda}(Y)$ is not a singleton are very rare. Indeed, the set of matrices $X\in \R^{n\times p}$ for which there exists a $Y\in \R^n$ where  $S_{X,\Lambda}(Y)$ is not a singleton has a null Lebesgue measure on $\R^{n\times p}$ \citep{schneider2022geometry}.  
If $\mathrm{ker}(\X)=\{0\}$, then $\Snew{\X}{\bLambda}{\Y}$ consists of one element.
Recall that a convex function $f$ attains its minimum at a point $b$ if and only if $0\in\partial f(b)$. Since $\partial \frac{1}{2}\|Y-Xb\|_2^2 = \{-X'(Y-Xb)\}$, the SLOPE  estimator
satisfies the following characterization:
$$
\hat\beta\in S_{X,\Lambda}(Y)\quad\Leftrightarrow\quad
X'(Y-X\hat\beta)\in\partial{J_\Lambda}(\hat \beta).
$$

\section{Characterization of pattern recovery by SLOPE} \label{sec:charact}
The characterization of  pattern recovery by  SLOPE given in Theorem
\ref{th:PT+} is a crucial result in this article. 
We recall that $\tilde{\P}_M=(\tilde \X_M')^+\tilde \X_M'=\tilde \X_M \tilde \X_M^+$ is the orthogonal projection  onto $\col(\tilde \X_M)$, where  $A^+$ represents  the Moore-Penrose pseudo-inverse of the matrix
$A$ (see {\it e.g.} \citep{inverses}).

\begin{theorem}\label{th:PT+}
	Let $X\in \R^{n\times p}$, $0\neq \beta\in \R^p$,   $Y=X\beta+\varepsilon$ for $\varepsilon \in \R^n$, $\Lambda\in \R^{p+}$. Let $M=\model(\beta)\in \mathcal{P}^{\rm SLOPE}_p$  and  $k=\|M\|_\infty$.
	Define 
	\begin{align}\label{eq:pi}
		\pi= \X'(\tilde \X_M')^+\tilde\bLambda_M+\X'(\I_n-\tilde{\P}_M)Y.
	\end{align}
	
	There exists $\hat \beta\in S_{X,\Lambda}(Y)$ with $\model(\hat{\beta})=\model(\beta)$ if and only if the two conditions below hold true:
	\[
	\begin{cases}
		\mbox{there exists $s\in\R^{k+}$ such that }
		\tilde \X_M'\Y-  \tilde \bLambda_M = \tilde\X_M'\tilde \X_M s,
		 \mbox{ (positivity condition)}
		\\
		\pi\in\partial{J_\Lambda}(M).   
  \hfill\mbox{ (subdifferential condition)}
	\end{cases}
	\]
	If the positivity and subdifferential conditions are satisfied, then $\hat \beta=U_M s\in S_{X,\Lambda}(Y)$ and $\pi=\X'(\Y-X\hat \beta)$.
\end{theorem}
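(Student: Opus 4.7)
The plan is to deduce both directions from the first-order optimality characterization stated at the end of Section 2, namely $\hat\beta\in S_{X,\Lambda}(Y)\iff X'(Y-X\hat\beta)\in\partial J_\Lambda(\hat\beta)$, combined with two structural facts established earlier: (a) a vector in $\R^p$ has SLOPE pattern $M$ if and only if it is of the form $U_M\kappa$ for some $\kappa\in\R^{k+}$; and (b) by Proposition \ref{prop:affine}, for every vector with pattern $M$ the subdifferential $\partial J_\Lambda$ equals $\{v\in\R^p:\ J^*_\Lambda(v)\le 1,\ U_M'v=\tilde\Lambda_M\}$. Since $M$ is itself a vector with pattern $M$, Proposition \ref{prop:subdiff_char} gives $\partial J_\Lambda(\hat\beta)=\partial J_\Lambda(M)$ whenever $\model(\hat\beta)=M$, so the two conditions appearing in the theorem are exactly what is needed to encode membership in this subdifferential for $v=X'(Y-X\hat\beta)$.

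For the forward direction, assume $\hat\beta\in S_{X,\Lambda}(Y)$ with $\model(\hat\beta)=M$. Write $\hat\beta=U_M s$ for some $s\in\R^{k+}$, so that $X\hat\beta=\tilde X_M s$, and set $v:=X'(Y-X\hat\beta)$. By optimality $v\in\partial J_\Lambda(\hat\beta)$, and Proposition \ref{prop:affine} forces $U_M'v=\tilde\Lambda_M$, which unfolds as $\tilde X_M'(Y-\tilde X_M s)=\tilde\Lambda_M$, i.e.\ exactly the positivity condition with witness $s$. To identify $v$ with $\pi$, apply $(\tilde X_M')^+$ to this equation; using the identity $(\tilde X_M')^+\tilde X_M'=\tilde P_M$ together with $\tilde X_M s\in\col(\tilde X_M)$ yields
\[
\tilde X_M s=\tilde P_M Y-(\tilde X_M')^+\tilde\Lambda_M,
\]
whence $v=X'Y-X'\tilde X_M s=X'(\I_n-\tilde P_M)Y+X'(\tilde X_M')^+\tilde\Lambda_M=\pi$. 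Combined with $v\in\partial J_\Lambda(\hat\beta)=\partial J_\Lambda(M)$, this is the subdifferential condition.

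Conversely, assuming both conditions, define $\hat\beta:=U_M s$ where $s\in\R^{k+}$ is the witness of the positivity condition; by fact (a), $\model(\hat\beta)=M$. Re-running the pseudoinverse computation in the same direction as above shows $X'(Y-X\hat\beta)=\pi$, and by the subdifferential condition together with Proposition \ref{prop:subdiff_char} this vector lies in $\partial J_\Lambda(M)=\partial J_\Lambda(\hat\beta)$; the optimality criterion then gives $\hat\beta\in S_{X,\Lambda}(Y)$, and the final identity $\pi=X'(Y-X\hat\beta)$ is built in. The only delicate point is the pseudoinverse manipulation used to put $\tilde X_M s$ in closed form and match $v$ with $\pi$; everything else is a direct unpacking of Propositions \ref{prop:affine} and \ref{prop:subdiff_char} and of the first-order optimality criterion.
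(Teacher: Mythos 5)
Your proof is correct and follows essentially the same route as the paper's: both directions rest on the first-order optimality criterion, Proposition \ref{prop:affine} applied through $U_M'$ to extract the positivity condition, and the same pseudoinverse manipulation via $\tilde P_M=(\tilde X_M')^+\tilde X_M'$ to identify $X'(Y-X\hat\beta)$ with $\pi$. No substantive differences from the paper's argument.
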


\begin{remark}
	\leavevmode
	\makeatletter
	\@nobreaktrue
	\makeatother
	\begin{enumerate}[(i)]
		\item 
		When $X$ is  deterministic and $\varepsilon$ has a $\mathrm{N}(0,\sigma^2\I_n)$ distribution, then the event of pattern recovery by SLOPE is the intersection of  statistically independent events:
		\begin{align*}
			A &= \left\{\omega\in \Omega\colon \mbox{there exists $s\in\R^{k+}$ such that }\tilde \X_M'\Y(\omega)-  \tilde \bLambda_M = \tilde\X_M'\tilde \X_M s\right\}, \\
			B &= \left\{\omega\in \Omega\colon \pi(\omega)\in \partial{J_\Lambda}(M)\right\}.
		\end{align*}
		Indeed, since $\tilde{X}'_M = \tilde{X}'_M \tilde{P}_M$ then $\tilde{X}'_MY(\omega)$ depends on $\beps_A(\omega)=\tilde{P}_M \beps(\omega)$. Moreover, $\pi(\omega)$  depends on $\beps_B(\omega)=(\I_n-\tilde{P}_M)\beps(\omega)$. 
		Since $\tilde{P}_M$ is an orthogonal projection, $\beps_A$ and $\beps_B$ have a null covariance matrix. But
		$\varepsilon$ is Gaussian and hence $\beps_A$ and $\beps_B$ are independent. Therefore  events $A$ and $B$ are independent.
		\item Under the positivity condition, the subdifferential condition is equivalent to $J_\Lambda^*(\pi)\le 1$.
		Indeed,  observe that $\tilde \Lambda_M\in \col(\tilde X_M')$ (or equivalently, $\tilde X_M'(\tilde X_M')^+\tilde \Lambda_M=\tilde \Lambda_M$) is necessary for the positivity condition. 
		In view of \eqref{eq:aff}, using the definition of $\pi$, we see that $U_M' \pi=\tilde \Lambda_M$ is equivalent to $\tilde X_M'(\tilde X_M')^+\tilde \Lambda_M=\tilde \Lambda_M$. This follows from the fact that $\tilde{\P}_M$ is the projection matrix onto the vector subspace $\col(\tilde{\X}_M)$, and thus $0'=[(I_n-\tilde{\P}_M)\tilde{\X}_M]'=U_M'X'(I_n-\tilde{P}_M)$.
		\item   The assertion of Theorem \ref{th:PT+} cannot be strengthened. Indeed, if $S_{X,\Lambda}(\Y)$ contains more than one element, then two different minimizers may have different SLOPE patterns.
	\end{enumerate}
\end{remark}
Even though many theoretical properties on sign recovery by LASSO are known (see {\it e.g.} \citep{wainwright2009sharp}),
we believe that it is relevant to give a characterization of sign recovery by LASSO  similar to the characterization of pattern recovery by SLOPE given in Theorem \ref{th:PT+}. 
Such a characterization could simplify proofs of well-known results on LASSO irrepresentability condition.
\begin{remark}\label{rem:LASSO}
	Let $0\neq S\in\{-1,0,1\}^p$ and $k=\|S\|_1$ ($k$ is the number of nonzero components of $S$).
	The signed matrix $\U_S\in \R^{p\times k}$
	is defined by $U_S=({\rm diag}(S))_{\mathrm{supp}(S)}$ where 
	${\rm diag}(S)\in \R^{p\times p}$ is a diagonal matrix and  $({\rm diag}(S))_{\mathrm{supp}(S)}$ denotes the submatrix of ${\rm diag}(S)$ obtained by keeping columns corresponding to indices in $\mathrm{supp}(S)$.
	Observe that for any $0\neq\beta\in\R^p$ there exists a unique $S\in\{-1,0,1\}^p$ and a unique $\kappa_0\in (0,\infty)^{k}$ such that $\beta=U_S\kappa_0$. 
	Define the reduced matrix $\tilde{\X}_S$ and reduced parameter
	$\tilde{\lambda}_S$
	by 
	\[
	\tilde{\X}_S=\X \U_S\mbox{ and } \tilde{\lambda}_S = \lambda 1_k, \mbox{ where }1_k=(1,\ldots,1)'\in \R^k.
	\]
    Moreover, for $b=(b_1,\ldots,b_p) \in \mathbb{R}^p$ define $\sign(b)=(\sign(b_1),\ldots,\sign(b_p))$. 
	Similarly to the proof of Theorem \ref{th:PT+}, one may prove that the necessary and sufficient conditions for the LASSO sign recovery (\textit{i.e.}, the existence of estimator $\betaLASSO$ such that $\sign(\betaLASSO)=\sign(\beta)=S$) are the following 
	\[
	\begin{cases}
		\mbox{there exists $\kappa\in \R_+^k$ such that }\tilde{\X}'_S \Y-\tilde\lambda_S = \tilde\X'_S \tilde\X_S\kappa, 
		\mbox{ (positivity condition)}
		\\
		
		\X'(\tilde\X'_S)^+ 1_k + \frac{1}{\lambda}\X'(\I_n-\tilde\X_S \tilde\X_S^+)Y \in \partial{\|\cdot\|_1}(S). \hfill
		\mbox{ (subdifferential condition)}
	\end{cases}
	\]
	In the noiseless case, when $\varepsilon=0$ and $Y=X\beta$, 
	the subdifferential condition reduces to 
	$\X'(\tilde\X'_S)^+ 1_k\in \partial{\|\cdot\|_1}(S)$
	(or equivalently, $\|\X'(\tilde\X'_S)^+ 1_k\|_\infty\le 1$ and $1_k\in \col(\tilde\X'_S)$). Moreover, when $\ker(X_S)=\{0\}$ then $1_k\in \col(\tilde\X'_S)$ occurs and  $\|\X'(\tilde\X'_S)^+ 1_k\|_\infty\le 1$ is equivalent to $\|\X_{\overline I}'\X_I(\X_I'\X_I)^{-1}S_I\|_\infty\le 1$ where $I={\rm supp}(S)$,  $\overline I=\{1,\ldots,p\}\setminus I$ and  $X_I$ (resp. $X_{\overline I}$) denotes the submatrix of $X$ obtained by keeping columns corresponding to indices in $I$ (resp ${\overline I}$). This latter expression is known as the irrepresentability condition \citep{fuchs2004sparse,zhao,zou}.
\end{remark}

\subsection{SLOPE irrepresentability condition} \label{sec:irrepresentability}

As illustrated by Fuchs \citep{fuchs2004sparse} (Theorem 2), B\"uhlmann and van de Geer \citep{buhlmann2011statistics} (Theorem 7.1) and also recalled in Remark \ref{rem:LASSO}, the irrepresentability condition is related to  sign recovery by LASSO in the noiseless case,
{\textit{i.e.}, when the noise $\varepsilon=0$}. 
Analogously, analyzing pattern recovery by SLOPE in the noiseless case allows to introduce
the SLOPE irrepresentability condition. This   condition will be very useful in the remainder of the article when the noise term $\varepsilon$ is no longer zero. Corollaries 
\ref{cor:mainmr0}
and
\ref{cor:mainmr0bis},
which provide a characterization of pattern recovery by SLOPE in the noiseless case (as defined in \citep{graczyk2023unified}), follow as consequences of  Theorem \ref{th:PT+}.

\begin{corollary}\label{cor:mainmr0}
 Let $X \in \mathbb{R}^{n \times p}$ and $\beta \in \mathbb{R}^p$ where $\model(\beta) = M \neq 0$. In the noiseless case, when $Y=X\beta$
 , the following statements are equivalent:
\begin{enumerate}[(i)]
    \item There exists $\Lambda \in \mathbb{R}^{p+}$ and $\hat{\beta} \in S_{X, \Lambda}(X\beta)$ such that $\model(\hat{\beta}) = \model(\beta)$.
    \item For all $\lambda_1^0>0$, there exists $\Lambda \in \mathbb{R}^{p+}$ with $\lambda_1 < \lambda_1^0$ and $\hat{\beta} \in S_{X, \Lambda}(X\beta)$ such that $\model(\hat{\beta}) = \model(\beta)$.
    \item There exists $\Lambda \in \mathbb{R}^{p+}$ such that $X'(\tilde{X}_M')^{+}\tilde{\Lambda}_M \in \partial J_\Lambda(M)$ (or equivalently $J_\Lambda^*(X'(\tilde{X}_M')^{+}\tilde \Lambda_M)\le 1$ and 
	$\tilde \Lambda_M \in \col(\tilde X_M')$).
    \item For all $\lambda_1^0>0$, there exists $\Lambda \in \mathbb{R}^{p+}$ with $\lambda_1 < \lambda_1^0$ such that $X'(\tilde{X}_M')^{+}\tilde{\Lambda}_M \in \partial J_\Lambda(M)$.
\end{enumerate}
\end{corollary}
Typically, for penalized estimators, the penalty term is scaled by a tuning parameter $\alpha > 0$.
The following corollary addresses the tuning of the SLOPE penalty $J_\Lambda$.

\begin{corollary}\label{cor:mainmr0bis}
When the penalty term $J_\Lambda(\cdot)$, with a fixed $\Lambda\in \R^{p+}$,  is  scaled by a parameter $\alpha>0$, the following statements are equivalent:
\begin{enumerate}
\item[(i)] There exists $\alpha>0$ and $\hat{\beta} \in S_{X, \alpha\Lambda}(X\beta)$ such that $\model(\hat{\beta}) = \model(\beta)$.
\item[(ii)] There exists $\alpha_0>0$ such that for all $\alpha \in (0,\alpha_0)$ there exists $\hat \beta  
	\in S_{X,\alpha\Lambda}(X\beta)$ for which $\model(\hat \beta)=\model(\beta)$.
\item[(iii)]  $X'(\tilde{X}_M')^{+}\tilde \Lambda_M\in \partial{J_\Lambda}(M)$.
\end{enumerate}
\end{corollary}

From now on, given $M=\model(\beta)$, 
we refer to the following inequality and inclusion as the SLOPE irrepresentability condition:
\begin{equation}
	\label{eq:SLOPE_IR}
	J^*_\bLambda\left( \X'(\tilde \X'_M)^+\tilde\bLambda_M\right)\le  1 \mbox{ and } \tilde\bLambda_M\in \col(\tilde \X_M').
\end{equation}

\begin{remark}
	\leavevmode
	\makeatletter
	\@nobreaktrue
	\makeatother
	\begin{enumerate}[(i)]
		\item  When $\ker(\tilde X_M)=\{0\}$, we have   $X'(\tilde{X}_M')^{+}=X'\tilde{X}_M(\tilde{X}_M'\tilde{X}_M)^{-1}$, and consequently,
		the  SLOPE irrepresentability condition  becomes $$J_\Lambda^*(X'\tilde{X}_M(\tilde{X}_M'\tilde{X}_M)^{-1}\tilde \Lambda_M)\le 1.$$ 
		\item   A geometric interpretation of $X'(\tilde{X}'_M)^{+}\tilde{\Lambda}_M$ is provided in the Appendix, see Section \ref{sec:geom}. 
	\end{enumerate}
	
\end{remark}

\begin{example}
	We give two illustrations  in  the particular case where $\Lambda=(4,2)'$, $\beta=(5,0)'$, $\bar \beta=(5,3)'$ and
	$X=(X_1|X_2)\in \R^{n\times 2}$ such that 
	$$X'X=\begin{pmatrix} 1 & 0.6 \\
		0.6 & 1 
	\end{pmatrix}.$$ 
	
	\begin{itemize}
		\item The SLOPE irrepresentability condition does not occur when $\beta=(5,0)'$. Indeed,  $M=\model(\beta)=(1,0)'$, $\tilde X_M=X_1$ (thus $\tilde X_M'\tilde X_M=1$) and $\tilde \Lambda_M=\lambda_1=4$. Therefore 
		$$J_\Lambda^*(X'(\tilde{X}_M')^{+}\tilde \Lambda_M)=J_\Lambda^*(X'\tilde{X}_M(\tilde{X}_M'\tilde{X}_M)^{-1}\tilde \Lambda_M)=J_\Lambda^*(4X'\tilde X_M)=6.4/6>1.$$
		\item
		The SLOPE irrepresentability condition 
		occurs when $\bar\beta=(5,3)'$. Indeed,  $M=\model(\bar\beta)=(2,1)'$, $\tilde \X_M=\X$ and $\tilde \Lambda_M=\Lambda$. Therefore $\ker(\tilde \X_M)=\{0\}$ and
		$$J_\Lambda^*(\X'(\tilde{\X}_M')^{+}\tilde \Lambda_M)=
		J_\Lambda^*(\X'\X(\X'\X)^{-1}\Lambda)=J_\Lambda^*(\Lambda)=1\le 1.$$
	\end{itemize}
	Figure \ref{fig:OWL_path} confirms graphically that  SLOPE irrepresentability condition does not occur for $\beta$  (resp. occurs for $\bar \beta$). 
	Note that, in this setup, the SLOPE solution  is unique (since $\ker(X)=\{0\}$); we denote by $\hat \beta(\alpha)$ the unique element of $S_{\X,\alpha\Lambda}(X\beta)$ and the SLOPE solution path refers to the function $\alpha \in (0,\infty) \mapsto \hat \beta(\alpha)$.

	\begin{figure}[!ht]
		\centering
		\begin{tabular}{c c}
			\includegraphics[scale=0.42]{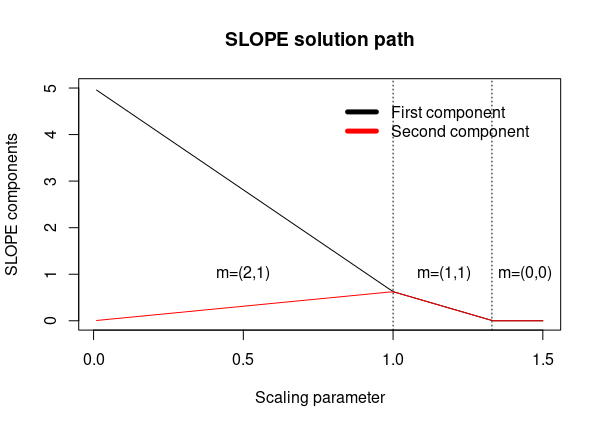}&
			\includegraphics[scale=0.42]{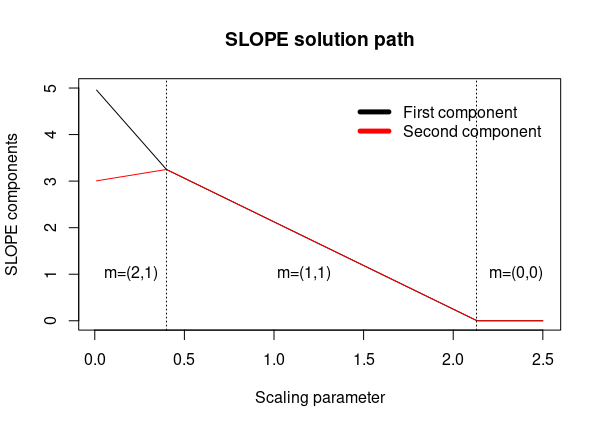}
		\end{tabular}
		\caption{
			On the left, the signal is $\beta=(5,0)'$. Based on this figure one may observe that the pattern of $\beta$ cannot be recovered by SLOPE in the noiseless case. Indeed, for $\alpha\in (0,1)$ we have $\model(\hat\beta(\alpha))=(2,1)'$; when 
			$\alpha\in [1,4/3)$ we have $\model(\hat\beta(\alpha))=(1,1)'$ and when $\alpha>4/3$ then $\hat\beta(\alpha)=0$. Consequently, for every $\alpha>0$ we have  $\model(\hat \beta(\alpha))\neq \model(\beta)=(1,0)'$.
			On the right, the signal is  $\bar\beta=(5,3)'$. Based on this figure one may observe that $\model(\bar\beta)$  is recovered by SLOPE in the noiseless case. Indeed, for $\alpha\in (0,0.4)$ we have $\model(\hat\beta(\alpha))=(2,1)'=\model(\bar\beta)$. 
		}\label{fig:OWL_path}
	\end{figure}
\end{example}

\section{Asymptotic probability on pattern recovery and pattern consistency}\label{sec:noisy}
From now on, in the definition of SLOPE \eqref{eq:SLOPE_problem},  we consider that the penalty term $J_\Lambda(b)$, with fixed $\Lambda\in \R^{p+}$, is multiplied by a scaling parameter $\alpha>0$. We denote by $S_{X,\alpha\Lambda}(Y)$ the set of SLOPE solutions. 
This scaling parameter may, for instance, vary in $(0,\infty)$ for the  solution path, or it can be chosen depending on the standard error of the noise. 
In this section  we consider two asymptotic scenarios and establish conditions on tuning parameters for which the pattern of $\beta$ is recovered. 
In Section \ref{sec:smalln} we consider the case where gaps between distinct absolute values of $\beta$ diverge and in Section \ref{sec:bign} the case where the sample size $n$ diverges.
The proofs rely on Theorem \ref{th:PT+}. We show that the positivity and subdifferential conditions are satisfied under our settings.  It turns out that for the positivity condition the tuning parameter cannot be too large, while for the subdifferential condition it cannot be too small. In this way we consider a tuning parameter of the form $\alpha \Lambda$, where $\Lambda\in\R^{p+}$ is fixed and $\alpha$ varies. We determine the assumptions for the sequence $(\alpha)$ for which both positivity and subdifferential conditions hold true, \textit{i.e.}, for which the pattern is recovered. 

\subsection{\texorpdfstring{$X$ is a fixed matrix}{X is a fixed matrix}}\label{sec:smalln}
The subdifferential condition, given in Theorem \ref{th:PT+}, says that a vector $\pi$ defined in \eqref{eq:pi} belongs to $\partial J_{\alpha \Lambda}(M)$, where $\alpha$ is a scaling parameter. This condition is equivalent to requiring that a vector $\pi_\alpha\ :={\pi}/\alpha$ is an element of $\partial J_{\Lambda}(M)$. We denote the vector $\pi/\alpha$ by
\begin{equation}
	\label{eq:upper_bound_pi}
	\pi_\alpha=X'(\tilde X_M')^+\tilde \Lambda_M+\frac{1}{\alpha}X'(I_n-\tilde P_M)Y=X'(\tilde X_M')^+\tilde \Lambda_M+\frac{1}{\alpha}X'(I_n-\tilde P_M)\varepsilon, \end{equation}
where in the latter equality we have used the fact that $(I_n-\tilde P_M)$ is an orthogonal projection onto $\col(\tilde{X}_M)^\bot$ and therefore $(I_n-\tilde P_M)X\beta = (I_n-\tilde P_M)\tilde{X}_Ms =0$, where $\beta=U_M s$ and $s\in\R^{\|M\|_\infty+}$.

By Theorem \ref{th:PT+}, the probability of  pattern recovery by SLOPE is upper bounded by 
\begin{equation}
	\label{eq:upper_bound}
	\mathbb{P}\left(\exists \hat \beta\in S_{X,\alpha\Lambda}(Y) \mbox{ such that }\model(\hat \beta)=\model(\beta)\right)\le \begin{cases}\mathbb{P}\left(J_\Lambda^*(\pi_\alpha)\le 1 \right),\\
		0 \mbox{ if } \tilde \Lambda_M \notin \col(\tilde X_M'). \end{cases}
\end{equation}
Note that the condition $\tilde \Lambda_M \in \operatorname{col}(\tilde X_M')$ and  $J_\Lambda^*(\pi_\alpha)\le 1$ is necessary  for pattern recovery by SLOPE, but not equivalent to it. Therefore, the inequality in \eqref{eq:upper_bound} is, in general, not an equality.
The first point in Theorem \ref{th:bound} shows that the probability of pattern recovery matches with the upper bound \eqref{eq:upper_bound} when the gaps between the different absolute values of components of $\beta$ are large enough. 
The last point establishes pattern consistency for SLOPE. The formulation of the theorem involves the notion of the relative interior of a set, which we recall below for completeness.

\begin{definition}
The affine hull of a set \( C \subseteq \mathbb{R}^n \), denoted by \( \mathrm{aff}(C) \), is the set of all affine combinations of points in \( C \):
\[
\mathrm{aff}(C) = \left\{ \theta_1 x_1 + \cdots + \theta_k x_k \,\colon\, x_1, \dots, x_k \in C, \, \theta_1 + \cdots + \theta_k = 1 \right\}.
\]
\end{definition}
\bigskip

\begin{definition}
The relative interior of a set  \( C \), denoted \( \mathrm{ri}(C) \), is the interior of \( C \) relative to its affine hull:
\[
\mathrm{ri}(C) = \left\{ x \in C \colon  B(x, r) \cap \mathrm{aff}(C) \subseteq C \text{ for some } r > 0 \right\},
\]
where \( B(x, r) = \{ y \mid \| y - x \| \leq r \} \) is the ball of radius \( r \) centered at \( x \), under any norm \( \| \cdot \| \). All norms define the same relative interior.
\end{definition}


\begin{theorem}
	\label{th:bound}
	Let $X\in \R^{n\times p}$, $0\neq M\in \mathcal{P}_p^{\rm SLOPE}$, and 
	$\Lambda=(\lambda_1,\ldots,\lambda_p)'\in \R^{p+}$. 
	Consider a sequence
	of signals $(\beta^{(r)})_{r\ge 1}$ with pattern $M$:
	\[
	\beta^{(r)}= U_M s^{(r)} \quad \mbox{with}\quad s^{(r)}\in \R^{k+} \mbox{ and } k=\|M\|_\infty,
	\]
	whose strength is increasing in the following sense: 
	\[
	\Delta_r=\min_{1\le i<k}\left(s_i^{(r)}-s_{i+1}^{(r)}\right)\overset{r\to \infty}{\longrightarrow} \infty, \mbox{ with the convention }s_{k+1}^{(r)}=0
	\]
	and let $Y^{(r)}=X\beta^{(r)}+\varepsilon$, where $\varepsilon$ is  a vector in $\R^n$.
	\begin{enumerate}[(i)]
		\item Sharpness of the upper bound: Let $\alpha >0$. 
		If $\varepsilon$ is random, then the upper bound \eqref{eq:upper_bound} is asymptotically reached: 
		\[
		\lim_{r \to \infty}\mathbb{P}\left(\exists \hat \beta \in S_{X,\alpha\Lambda}(Y^{(r)}) \mbox{ such that }\model(\hat \beta)=M\right)=\begin{cases}\mathbb{P}\left(J_\Lambda^*(\pi_\alpha)\le 1 \right),\\
			0 \mbox{ if } \tilde \Lambda_M \notin \col(\tilde X_M'). \end{cases}
		\]
		\item  Pattern  consistency:
		If $\alpha_r\to\infty$, $\alpha_r/\Delta_r\to0$ as $r\to\infty$ and 
		$$X'(\tilde X_M')^+\tilde \Lambda_M\in \mathrm{ri}(\partial{J_\Lambda}(M)),$$ then for any $\beps\in\R^n$ we have
		$$\exists\,r_0>0\,\,\, \forall\, r\ge r_{0}\,\,\, \exists\, \hat \beta \in S_{X,\alpha_r \Lambda}(Y^{(r)}) \mbox{ such that }\model(\hat \beta)=M.$$
	\end{enumerate}
\end{theorem}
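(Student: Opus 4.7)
The plan is to reduce the pattern-recovery event to the joint validity of the positivity and subdifferential conditions via Theorem \ref{th:PT+}. The pivotal observation is that $(\I_n - \tilde\P_M)\X\beta^{(r)} = (\I_n-\tilde\P_M)\tilde\X_M s^{(r)} = 0$, so with tuning $\alpha\bLambda$ (resp.\ $\alpha_r\bLambda$) the vector $\pi$ in \eqref{eq:pi} equals $\alpha\X'(\tilde\X_M')^+\tilde\bLambda_M + \X'(\I_n-\tilde\P_M)\varepsilon$, and is thus independent of $r$ through $\beta^{(r)}$.

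\textbf{Part (i).} By Proposition \ref{prop:affine}, the subdifferential condition is equivalent to $J_\bLambda^*(\pi_\alpha)\le 1$ together with $\U_M'\pi_\alpha=\tilde\bLambda_M$. Using $\U_M'\X' = \tilde\X_M'$ and $\tilde\X_M'(\I_n-\tilde\P_M)=0$, one computes $\U_M'\pi_\alpha = \tilde\X_M'(\tilde\X_M')^+\tilde\bLambda_M$, which equals $\tilde\bLambda_M$ iff $\tilde\bLambda_M\in\col(\tilde\X_M')$; otherwise pattern recovery has probability zero for every $r$ and the limit is trivial. When $\tilde\bLambda_M\in\col(\tilde\X_M')$, the subdifferential event reduces to the $r$-independent $\{J_\bLambda^*(\pi_\alpha)\le 1\}$. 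For positivity I would exhibit
\[
s = s^{(r)} + (\tilde\X_M'\tilde\X_M)^+\bigl(\tilde\X_M'\varepsilon-\alpha\tilde\bLambda_M\bigr),
\]
which solves the positivity equation because $\tilde\X_M'\varepsilon-\alpha\tilde\bLambda_M\in\col(\tilde\X_M')=\col(\tilde\X_M'\tilde\X_M)$. The correction term is a fixed-in-$r$ random vector, while $s^{(r)}_i - s^{(r)}_{i+1}\ge\Delta_r$ and $s^{(r)}_k\ge\Delta_r$; hence $s\in\R^{k+}$ with probability tending to $1$, which combined with the fixed subdifferential event yields $\mathbb{P}(\textrm{recovery}_r)\to\mathbb{P}(J_\bLambda^*(\pi_\alpha)\le 1)$.

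\textbf{Part (ii).} Here $\varepsilon$ is fixed and the argument is purely deterministic. The hypothesis $v_0:=\X'(\tilde\X_M')^+\tilde\bLambda_M\in\mathrm{ri}(\partial J_\bLambda(M))$ already forces $\tilde\bLambda_M\in\col(\tilde\X_M')$ via Proposition \ref{prop:affine}. For positivity I would use the analogous
\[
s = s^{(r)} + (\tilde\X_M'\tilde\X_M)^+\bigl(\tilde\X_M'\varepsilon-\alpha_r\tilde\bLambda_M\bigr);
\]
the correction has norm $O(\alpha_r)$, which is dominated by the gaps of size $\Delta_r$ since $\alpha_r/\Delta_r\to 0$, so $s\in\R^{k+}$ for $r$ large. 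For the subdifferential condition, $\pi_r/\alpha_r = v_0 + \alpha_r^{-1}\X'(\I_n-\tilde\P_M)\varepsilon\to v_0$; the perturbation lies in $\ker(\U_M')$, i.e.\ in the direction of the affine subspace $\{v:\U_M'v=\tilde\bLambda_M\}$ that contains $\partial J_\bLambda(M)$. By the openness of $\mathrm{ri}(\partial J_\bLambda(M))$ in $\aff(\partial J_\bLambda(M))$ one obtains $\pi_r/\alpha_r\in\partial J_\bLambda(M)$ for $r$ large, which is the subdifferential condition $\pi_r\in\partial J_{\alpha_r\bLambda}(M)$.

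\textbf{Main obstacle.} The delicate step is the relative-interior argument in Part (ii): one must verify that the perturbations of $\pi_r/\alpha_r$, which lie in $\ker(\U_M')$, actually remain in the translation direction of $\aff(\partial J_\bLambda(M))$. Under $\lambda_1>\cdots>\lambda_p>0$, one can establish the identification $\aff(\partial J_\bLambda(M)) = \{v:\U_M'v=\tilde\bLambda_M\}$ by producing a point of this affine subspace at which $J_\bLambda^*<1$, which forces the intersection with the dual unit ball to be full-dimensional relative to the affine subspace. Everything else reduces to algebraic manipulation of Theorem \ref{th:PT+} and Proposition \ref{prop:affine}, combined with the elementary comparison of $\alpha$ (resp.\ $\alpha_r$) with $\Delta_r$.
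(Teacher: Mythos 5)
Your overall strategy coincides with the paper's: reduce pattern recovery to the positivity and subdifferential conditions of Theorem \ref{th:PT+}, verify positivity by an explicit solution whose $r$-dependent part is $s^{(r)}$ and whose correction is dominated by the gaps $\Delta_r$, and verify the subdifferential condition by observing that $\pi_r/\alpha_r$ converges to $X'(\tilde X_M')^+\tilde\Lambda_M$ while staying inside $\aff(\partial J_\Lambda(M))$, so that the relative-interior hypothesis finishes the argument. Parts (i) and (ii) are sound as written and essentially reproduce the paper's proof (the paper factors out $\Delta_r$ instead of writing the corrected $s$ explicitly, but this is cosmetic).

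The one step that would fail is your proposed resolution of what you correctly identify as the delicate point, namely the identification $\aff(\partial J_\Lambda(M))=\{v:\U_M'v=\tilde\Lambda_M\}$. You suggest producing a point $v$ of this affine subspace with $J_\Lambda^*(v)<1$. No such point exists for $M\neq 0$: writing $p_k=|\supp(M)|$ and summing the $k$ coordinates of $\U_M'v=\tilde\Lambda_M$ gives $\sum_{j=1}^{p_k}\lambda_j=\sum_{i\in\supp(M)}\sign(M_i)v_i\le\sum_{j=1}^{p_k}|v|_{(j)}$, so the $j=p_k$ constraint defining $J_\Lambda^*(v)\le 1$ is forced to be active and $J_\Lambda^*(v)\ge 1$ on the entire affine subspace. (Already for $p=2$, $M=(1,1)'$, the set $\partial J_\Lambda(M)$ is a segment lying on the dual unit sphere.) Thus $\partial J_\Lambda(M)$ is contained in a face of the dual ball, and its fullness relative to $\{v:\U_M'v=\tilde\Lambda_M\}$ cannot be obtained from an interior point of the ball; the conclusion is nevertheless true, and the paper obtains it in Lemma \ref{lemma:affine_space} by combining the inclusion from Proposition \ref{prop:affine} with the dimension formula $\dim(\aff(\partial J_\Lambda(b)))=p-\|M\|_\infty$ taken from Theorem 4 of \cite{schneider2020geometry}. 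You would need to substitute such a dimension count (or a direct construction of $p-k$ affinely independent elements of $\partial J_\Lambda(M)$) for your interior-point argument.
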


\begin{remark}\label{rem:i}
	\begin{enumerate}[(i)]
 \item  The condition $X'(\tilde X_M')^+\tilde \Lambda_M\in \mathrm{ri}(\partial{J_\Lambda}(M))$, called open irrepresentability condition, is slightly stronger than the   irrepresentability condition $X'(\tilde X_M')^+\tilde \Lambda_M\in \partial{J_\Lambda}(M)$.  Note that the tight gap between these conditions is not specific to SLOPE. For instance, for LASSO, the  irrepresentability condition  which is sufficient for support recovery  in the noisy case  is  stronger than the weak irrepresentability condition  for the noiseless case   (see \cite{buhlmann2011statistics} pages 190-192 and 244). 
      \item For the open irrepresentability condition we must check that the cardinality of the set of equalities among the $p$ inequalities corresponding to \(J_\Lambda^*(X'(\tilde X_M')^+\tilde \Lambda_M)\le 1\), see \eqref{def:dual_norm},
      is exactly $\|M\|_\infty$.
      That is,   $X'(\tilde X_M')^+\tilde \Lambda_M\in \mathrm{ri}(\partial{J_\Lambda}(M))$ is equivalent to the following computationally verifiable conditions:
		\begin{equation}
		     \begin{cases}J_\Lambda^*(X'(\tilde X_M')^+\tilde \Lambda_M)\le 1 \mbox{ and }\tilde \Lambda_M\in \col(\tilde X_M'),\\
			\left|\left\{i\in \{1,\ldots,p\}\colon \sum_{j=1}^{i}|X'(\tilde X_M')^+\tilde \Lambda_M|_{(j)}=\sum_{j=1}^{i}\lambda_j\right\}\right|=\|M\|_\infty.\end{cases}\end{equation}
		This equivalence follows from Proposition \ref{pro:simple}.	
      
		\item Let us assume that the distributions of $\varepsilon$ and $-\varepsilon$ are equal. Because the unit ball of the dual sorted $\ell_1$ norm
		is convex,
		when  $J_\Lambda^*(X'(\tilde X_M')^+\tilde \Lambda_M)>1$ then, independently of $\alpha>0$, the probability of pattern recovery is smaller than $1/2$, namely 
		$$\mathbb{P}\left(\exists \hat \beta \in S_{X,\alpha\Lambda}(Y) \mbox{ such that }\model(\hat \beta)=M\right)\le 1/2.$$
		This inequality corroborates Theorem 4.6 in \citep{graczyk2023unified}. For LASSO, a similar inequality on the probability of sign recovery is given in \citep{wainwright2009sharp}. 
		\item     
		In Section~\ref{sec:simulations}, we illustrate that, under the open irrepresentability condition, one may select $\alpha>0$  to fix the asymptotic probability of pattern recovery at a level arbitrarily close to $1$ (a similar result for LASSO is given in \citep{tardivel2022sign}).
	\end{enumerate}
\end{remark}

\subsection{\texorpdfstring{$\X$ is random, $p$ is fixed, $n$ tends to infinity}{X is random, p is fixed, n tends to infinity}}\label{sec:bign}

In this section  we discuss asymptotic properties of the SLOPE estimator in the low-dimensional regression model in which $p$ is  fixed   and the sample size $n$ tends to infinity.

For each $n \ge p$ we consider a linear regression problem 
\begin{equation}\label{reg2}
	\Y_n=\X_n\beta +\beps_n,
\end{equation}
where $\X_n\in\R^{n\times p}$ is a random design matrix.
We now list our assumptions:
\begin{enumerate}
	\item[A.] $\beps_n=(\epsilon_1,\ldots,\epsilon_n)'$, where $(\epsilon_i)_i$ are i.i.d. centered with finite variance.
	\item[B1.] A sequence of design matrices $\X_1, \X_2,\ldots$ satisfies the condition
	\begin{equation} \label{assumption on design}
		\frac{1}{n}\X_n' \X_n \stackrel{\mathbb{P}}{\longrightarrow} \C,
	\end{equation}
	where $\C$ is a deterministic positive definite symmetric $p\times p$ matrix.
	\item[B2.]
	For each $j=1,\ldots,p$,
	\begin{align*}
		\frac{\max_{i=1,\ldots,n} |X_{ij}^{(n)}|}{\sqrt{\sum_{i=1}^n (X_{ij}^{(n)})^2}} \stackrel{\mathbb{P}}{\longrightarrow}0.
	\end{align*}
	\item[C.] $(\X_n)_n$ and $(\epsilon_n)_n$ are independent. 
\end{enumerate}

We will consider a sequence of tuning parameters $(\Lambda_n)_n$ defined by
\begin{align*}
	\bLambda_n=\alpha_n\bLambda,
\end{align*}
where $\bLambda\in\R^{p+}$ is fixed and $(\alpha_n)_n$ is a sequence of positive numbers.

Let $\betaSLOPE_n$ be an element from the set $\Snew{\X_n}{\bLambda_n}{\Y_n}$ of SLOPE minimizers.
Under assumption B1, for large $n$ with high probability, the set $\Snew{\X_n}{\bLambda_n}{\Y_n}$ consists of one element. Indeed, we have
\[
\mathbb{P}\left(\ker(X_n)=\{0\}\right)=\mathbb{P}\left(X_n'X_n \mbox{ is positive definite}\right)\stackrel{n\to\infty}{\longrightarrow}1
\]
and $\ker(\X_n)=\{0\}$ ensures the existence of the unique SLOPE minimizer. 
In a natural setting, the strong consistency of $\betaSLOPE_n$ can be characterized in terms of behaviour of the tuning parameter, see Theorem \ref{thm:consist} or \cite[Th. 4.1]{Skalski_2022}. At this point we note that if \eqref{assumption on design} holds almost surely, then condition $\alpha_n/n\to0$ ensures that $\betaSLOPE_n\stackrel{a.s.}{\longrightarrow}\beta$. Thus, if $\beta$ does not have any clusters nor zeros, \textit{i.e.}, $\|\model(\beta)\|_\infty=p$, then the $\alpha_n/n\to0$ suffices for $\model(\betaSLOPE_n)\stackrel{a.s.}{\longrightarrow} \model(\beta)$. However, if $\|\model(\beta)\|<p$, then 
the situation is more complex as we shall show below.

The first of our asymptotic results concerns the consistency of the pattern recovery by the SLOPE estimator. We note that condition B2 is not necessary for the SLOPE pattern recovery. This assumption was 
introduced to ensure the existence of a Gaussian vector in Theorem \ref{th:bound_n} (i).

The formulation of the following theorem involves the 
notion of the pattern matrix $U_M$, as defined in Definition \ref{SLOPE pattern matrix}.
\begin{theorem}	\label{th:bound_n}
	Under the assumptions A, B1, C, the following statements hold true.
	\begin{enumerate}
		\item[(i)] If $B2$ is additionally satisfied and moreover $\alpha_n=\sqrt{n}$, then  
		\[
		\lim_{n \to \infty}\mathbb{P}\left(\model(\betaSLOPE_n)=\model(\beta)\right)=\mathbb{P}\left( J_\bLambda^*(\Zb)\le 1\right),
		\]
		where $\Zb\sim \mathrm{N}(CU_M(U'_MCU_M)^{-1}\tilde\Lambda_M, \sigma^2[C-CU_M (U'_M CU_M)^{-1}U'_M C])$.
		\item[(ii)] Assume 
		\begin{align}\label{eq:Cri}
			\C\U_M(\U_M'\C\U_M)^{-1}\tilde{\bLambda}_M\in\mathrm{ri}(\partial J_\Lambda(M)).
		\end{align}
		The pattern of SLOPE estimator is consistent, {\it i.e}.
		\[
		\model(\betaSLOPE_n)\stackrel{\mathbb{P}}{\longrightarrow}\model(\beta),
		\]
		if and only if 
		\[
		\lim_{n \to \infty}  \frac{\alpha_n}{n}=0\qquad\mbox{and}\qquad		\lim_{n \to \infty}  \frac{\alpha_n}{\sqrt{n} } = \infty.
		\]
		\item[(iii)] The  condition 
		\begin{align}\label{eq:matrixIR}
			J_\bLambda^*\left(\C\U_M(\U_M'\C\U_M)^{-1}\tilde{\bLambda}_M\right)\le 1
		\end{align} 
		is necessary for pattern consistency of SLOPE estimator.
	\end{enumerate}
\end{theorem}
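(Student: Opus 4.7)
The plan is to invoke Theorem~\ref{th:PT+} with scaled penalty $\alpha_n\Lambda$. By B1 the event $\ker(X_n)=\{0\}$ has probability tending to one, and on it $\tilde X_{M,n}:=X_nU_M$ has full column rank, the SLOPE minimizer is unique, and $\{\model(\betaSLOPE_n)=M\}$ is the intersection of the positivity condition (which simplifies, by inverting $\tilde X_{M,n}'\tilde X_{M,n}$, to $s_n\in \R^{k+}$ where $s_n:=(\tilde X_{M,n}'\tilde X_{M,n})^{-1}(\tilde X_{M,n}'Y_n-\alpha_n\tilde\Lambda_M)$) and the subdifferential condition (which, as $U_M'\pi_{\alpha_n}=\tilde\Lambda_M$ is automatic on this event, reduces to $J_\Lambda^*(\pi_{\alpha_n})\le 1$). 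Writing $Y_n=\tilde X_{M,n}s_0+\varepsilon_n$ with $s_0\in\R^{k+}$ and using B1 to replace $\tfrac1n\tilde X_{M,n}'\tilde X_{M,n}$ by $U_M'CU_M$ yields
\[
s_n=s_0-\tfrac{\alpha_n}{n}(U_M'CU_M)^{-1}\tilde\Lambda_M+O_\mathbb{P}(1/\sqrt n)+o_\mathbb{P}(\alpha_n/n),\quad \pi_{\alpha_n}=v^*+o_\mathbb{P}(1)+\tfrac{1}{\alpha_n}X_n'(I_n-\tilde P_{M,n})\varepsilon_n,
\]
with $v^*:=CU_M(U_M'CU_M)^{-1}\tilde\Lambda_M$. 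A conditional CLT using A and B2 identifies the limit of $\tfrac{1}{\sqrt n}X_n'(I_n-\tilde P_{M,n})\varepsilon_n$ as $\mathrm{N}(0,\sigma^2(C-CU_M(U_M'CU_M)^{-1}U_M'C))$, and the orthogonality of the projectors $\tilde P_{M,n}$ and $I_n-\tilde P_{M,n}$ (conditional on $X_n$) makes the noise driving positivity asymptotically independent of the noise driving the subdifferential condition.

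Part (i) follows directly: $\alpha_n=\sqrt n$ kills the shift in $s_n$, so $s_n\stackrel{\mathbb{P}}{\to}s_0\in \R^{k+}$ and positivity holds with probability tending to one, while $\pi_{\sqrt n}$ converges in distribution to $Z$; the continuous mapping theorem applied to $J_\Lambda^*$, together with the asymptotic independence just noted, yields the stated limit $\mathbb{P}(J_\Lambda^*(Z)\le 1)$. Sufficiency in (ii) is analogous: $\alpha_n/n\to 0$ gives $s_n\stackrel{\mathbb{P}}{\to}s_0$, so positivity holds asymptotically, while $\alpha_n/\sqrt n\to\infty$ makes the noise in $\pi_{\alpha_n}$ vanish, so $\pi_{\alpha_n}\stackrel{\mathbb{P}}{\to}v^*$; by the open irrepresentability assumption $v^*\in\mathrm{ri}(\partial J_\Lambda(M))$, and since $\pi_{\alpha_n}$ always sits in the affine hull $\{v:U_M'v=\tilde\Lambda_M\}$, convergence to a relative-interior point of this face produces $\mathbb{P}(\pi_{\alpha_n}\in\partial J_\Lambda(M))\to 1$.

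For necessity in (ii) I argue contrapositively. If $\alpha_n/\sqrt n\not\to\infty$, then on a subsequence the noise in $\pi_{\alpha_n}$ admits a non-degenerate Gaussian limit inside the affine hull; since $J_\Lambda^*(v^*)=1$ (the point $v^*\in\partial J_\Lambda(M)$ lies on the boundary of the $J_\Lambda^*$-unit ball), a positive limiting probability of exiting $\partial J_\Lambda(M)$ persists, contradicting consistency. If $\alpha_n/n\not\to0$, a subsequence with $\alpha_n/n\to c\in(0,\infty]$ either forces $\|s_n\|\to\infty$ (when $c=\infty$) and breaks positivity immediately, or shifts $s_n$ to a deterministic limit $s_0-cd$ with $d:=(U_M'CU_M)^{-1}\tilde\Lambda_M\neq0$; coupling this with OIC and a geometric check that the shifted limit cannot remain in $\R^{k+}$ is the delicate part of the argument. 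Part (iii) is then short: if $J_\Lambda^*(v^*)>1$, the deterministic part of $\pi_{\alpha_n}$ converges in probability to a point strictly outside the dual unit ball, so regardless of the noise scaling $\liminf\mathbb{P}(J_\Lambda^*(\pi_{\alpha_n})>1)>0$, and pattern consistency fails. The main obstacle I anticipate is the conditional CLT for $\tfrac{1}{\sqrt n}X_n'(I_n-\tilde P_{M,n})\varepsilon_n$ under random design (where B2 supplies a Lindeberg-type condition for the weighted sums of $\varepsilon_i$) together with the geometric step that convergence to a relative-interior point of the face $\partial J_\Lambda(M)$, within its affine hull, is enough to enter this face with probability tending to one.
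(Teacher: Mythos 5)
Your proposal follows the same route as the paper's proof: split pattern recovery via Theorem~\ref{th:PT+} into the positivity event, handled by a law of large numbers for $s_n$ exactly as in Lemma~\ref{lemma:s_n}, and the subdifferential event, handled through the decomposition $\alpha_n^{-1}\pi_n=\alpha_n^{-1}\pi_n^{(1)}+\alpha_n^{-1}\pi_n^{(2)}$ with the conditional CLT of Lemmas~\ref{lem:4} and~\ref{lem:DN0}, followed by the Portmanteau/relative-interior argument for sufficiency in (ii). Two small points: the ``asymptotic independence'' you invoke in (i) is not needed, since the positivity event has probability tending to one and hence $\mathbb{P}(A_n\cap B_n)$ and $\mathbb{P}(B_n)$ share the same limit; and your subsequence argument for the necessity of $\alpha_n/\sqrt{n}\to\infty$ (non-degenerate centred Gaussian fluctuation inside the affine hull around a point of the compact face $\partial J_\Lambda(M)$, hence escape probability bounded away from zero) is actually more explicit than the paper's one-line appeal to the ``if and only if'' of the distributional convergence of $\alpha_n^{-1}\pi_n$.

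The step you flag as delicate --- necessity of $\alpha_n/n\to0$ --- is a genuine gap in your proposal, but you should know the paper's own proof never addresses it either. Moreover, your computation shows why no ``geometric check'' can close it along this route for a fixed $\beta$: if $\alpha_n/n\to c\in(0,\infty)$, then $s_n$ converges in probability to $s_0-c\,(U_M'CU_M)^{-1}\tilde\Lambda_M$, and since $\R^{k+}$ is open this limit remains in $\R^{k+}$ for all sufficiently small $c>0$; for such $c$ one also has $\alpha_n/\sqrt{n}\to\infty$, so under \eqref{eq:Cri} the subdifferential condition holds with probability tending to one as well, and the pattern is still recovered. Thus the ``only if'' of (ii) in the direction $\alpha_n/n\to0$ fails for a fixed signal with $s_0$ well inside $\R^{k+}$ and can only be rescued by interpreting consistency uniformly over all $\beta$ with pattern $M$ (taking $s_0$ arbitrarily close to the boundary of $\R^{k+}$), a reading neither you nor the paper makes explicit. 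Apart from this shared deficiency, your treatment of (i), of sufficiency in (ii), and of (iii) matches the paper's argument and is sound.
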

The random vector $Z$ belongs to the smallest affine space containing $\partial{J_\Lambda}(b)$, {\it i.e.}, $\aff(\partial{J_\Lambda}(b))=\{v\in \R^p\colon U_M'v=\tilde \Lambda_M\}$, see Lemma \ref{lemma:affine_space}.

Condition \eqref{eq:Cri} is the open SLOPE irrepresentability condition in the $n\to\infty$ regime. 
The above result should be compared with \cite[Theorem 1]{zhao}, where the same conditions on the LASSO tuning parameter ensure consistency of sign recovery by the LASSO estimator. Below we make a step further and consider the strong consistency of SLOPE pattern recovery by $\betaSLOPE_n$. 
Although this was not Zhao's and Yu's main focus, it can be deduced from \cite[Theorem 1]{zhao} that if for $c\in(0,1)$ the LASSO tuning parameter $\lambda_n$ satisfies $\lambda_n/n\to 0$ and $\lambda_n/n^{\frac{1+c}{2}}\to\infty$, then under the strong LASSO irrepresentability condition, one has $\sign(\betaLASSO_n)\stackrel{a.s.}{\longrightarrow}\sign(\beta)$.
Even though the patterns are discrete objects, as the underlying probability space is uncountable, the convergence in probability does not imply the almost sure convergence. We show below that if $\alpha_n/n\to 0$ and $\alpha_n/\sqrt{n}\to\infty$, then $\model(\betaSLOPE_n)$ is not strongly consistent and one actually has to impose a slightly stronger condition \eqref{eq:lambdan}.

For the purpose of the a.s. convergence, we strengthen the assumption on design matrices:
\begin{enumerate}
	\item[B'.]  Assume that  the rows of $\X_n$ are independent and that each row of $\X_n$ has the same law as $\xi$, where $\xi$
	is
	a random vector whose components are linearly independent a.s. and that $\E[\xi_i^2]<\infty$ for $i=1,\ldots,p$.
\end{enumerate}

\begin{remark}\label{rem:rem0}
	Under B', by the strong law of large numbers, we have 
	$n^{-1}\X_n' \X_n \stackrel{a.s.}{\longrightarrow} \C$,
	where $\C=(C_{ij})_{ij}$ with $C_{ij} = \E[\xi_i \xi_j]$. Moreover, $\C$ is positive definite if and only if the random variables $(\xi_1,\ldots,\xi_p)$ are linearly independent a.s. Indeed, for $t\in\R^p$ we have
	$t'\C t = \E[(\sum_{i=1}^p t_i \xi_i)^2]>0$
	if and only if $\sum_{i=1}^p t_i \xi_i\neq0$ a.s. for all $t\in\R^p\setminus\{0\}$.
	
	Since B' ensures that \eqref{assumption on design} holds a.s., it also implies that for large $n$, almost surely there exists a unique SLOPE minimizer. We denote this element by $\betaSLOPE_n$.
\end{remark}

\begin{theorem}\label{th:asymp sufficient}
	Under $A$, $B'$ and $C$ assume that a sequence $(\alpha_n)_n$ satisfies
	\begin{align}\label{eq:lambdan}
		\lim_{n \to \infty}  \frac{\alpha_n}{n}=0\qquad\mbox{and}\qquad		\lim_{n \to \infty}  \frac{\alpha_n}{\sqrt{n \log\log n} } = \infty.
	\end{align}
	If \eqref{eq:Cri} holds, 
	then 
	the sequence $(\betaSLOPE_n)_n$  recovers almost surely
	the pattern of $\beta$ asymptotically, \textit{i.e.},
	\begin{equation} \label{property: ASAMRP}
		\model(\betaSLOPE_n)\stackrel{a.s.}{\longrightarrow}\model(\beta).
	\end{equation}
\end{theorem}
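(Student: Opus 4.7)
My plan is to apply the characterization given in Theorem~\ref{th:PT+} with tuning parameter $\alpha_n\Lambda$ in place of $\Lambda$, and show that both the positivity and subdifferential conditions hold almost surely for all sufficiently large $n$. Under B', the vectors $\xi$ have linearly independent components a.s., so $C=\E[\xi\xi']$ is positive definite and $\tfrac{1}{n}X_n'X_n\to C$ a.s.\ by the strong law of large numbers (Remark~\ref{rem:rem0}). Writing $\beta=U_M s^{(0)}$ with $s^{(0)}\in\R^{k+}$, we also get $\tfrac{1}{n}\tilde X_{n,M}'\tilde X_{n,M}=\tfrac{1}{n}U_M'X_n'X_nU_M\to U_M'CU_M$, which is positive definite; hence a.s.\ for large $n$, $\tilde X_{n,M}$ has full column rank, $\ker(X_n)=\{0\}$, and $S_{X_n,\alpha_n\Lambda}(Y_n)=\{\betaSLOPE_n\}$.

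For the positivity condition, invertibility of $\tilde X_{n,M}'\tilde X_{n,M}$ yields the unique candidate
\[
s_n = s^{(0)} + \left(\tfrac{1}{n}\tilde X_{n,M}'\tilde X_{n,M}\right)^{-1}\!\left[\tfrac{1}{n}\tilde X_{n,M}'\varepsilon_n - \tfrac{\alpha_n}{n}\tilde\Lambda_M\right].
\]
By the SLLN applied to the i.i.d., centered, integrable vectors $(\xi_i\,\epsilon_i)_i$, we have $\tfrac{1}{n}X_n'\varepsilon_n\to 0$ a.s., hence $\tfrac{1}{n}\tilde X_{n,M}'\varepsilon_n\to 0$ a.s. Combined with $\alpha_n/n\to 0$ and the convergence of $\tfrac{1}{n}\tilde X_{n,M}'\tilde X_{n,M}$, we obtain $s_n\to s^{(0)}$ a.s. Since $\R^{k+}$ is open and $s^{(0)}\in\R^{k+}$, the positivity condition holds a.s.\ for $n$ large enough.

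For the subdifferential condition, recall from \eqref{eq:upper_bound_pi} that
\[
\pi_{n,\alpha_n} = X_n'(\tilde X_{n,M}')^+\tilde\Lambda_M + \tfrac{1}{\alpha_n}X_n'(I_n-\tilde P_{n,M})\varepsilon_n.
\]
When $\tilde X_{n,M}$ has full column rank, $(\tilde X_{n,M}')^+=\tilde X_{n,M}(\tilde X_{n,M}'\tilde X_{n,M})^{-1}$, so the first term equals $\bigl(\tfrac{1}{n}X_n'X_nU_M\bigr)\bigl(\tfrac{1}{n}U_M'X_n'X_nU_M\bigr)^{-1}\tilde\Lambda_M\to CU_M(U_M'CU_M)^{-1}\tilde\Lambda_M$ a.s. The main work is to control the noise term: by the Hartman–Wintner law of the iterated logarithm applied componentwise to the i.i.d.\ sequence $(X_{n,j}\epsilon_j)_j$ (which is centered with finite variance by A, B', C), $\|X_n'\varepsilon_n\|=O(\sqrt{n\log\log n})$ a.s., and the same bound propagates through $X_n'\tilde P_{n,M}\varepsilon_n$ because $X_n'\tilde X_{n,M}(\tilde X_{n,M}'\tilde X_{n,M})^{-1}$ converges a.s. Therefore $X_n'(I_n-\tilde P_{n,M})\varepsilon_n=O(\sqrt{n\log\log n})$ a.s., and the assumption $\alpha_n/\sqrt{n\log\log n}\to\infty$ forces $\tfrac{1}{\alpha_n}X_n'(I_n-\tilde P_{n,M})\varepsilon_n\to 0$ a.s. Hence $\pi_{n,\alpha_n}\to \pi^\star:=CU_M(U_M'CU_M)^{-1}\tilde\Lambda_M$ a.s.

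Finally, $U_M'X_n'(I_n-\tilde P_{n,M})=\tilde X_{n,M}'(I_n-\tilde P_{n,M})=0$ and $\tilde X_{n,M}'(\tilde X_{n,M}')^+\tilde\Lambda_M=\tilde\Lambda_M$, so $U_M'\pi_{n,\alpha_n}=\tilde\Lambda_M$ exactly; that is, $\pi_{n,\alpha_n}\in\aff(\partial J_\Lambda(M))=\{v:U_M'v=\tilde\Lambda_M\}$. Since $\pi^\star\in\mathrm{ri}(\partial J_\Lambda(M))$ by \eqref{eq:Cri} and $\mathrm{ri}(\partial J_\Lambda(M))$ is relatively open in its affine hull, a.s.\ for large $n$ we have $\pi_{n,\alpha_n}\in\mathrm{ri}(\partial J_\Lambda(M))\subseteq\partial J_\Lambda(M)$, which is the subdifferential condition. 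By Theorem~\ref{th:PT+} we conclude that $\model(\betaSLOPE_n)=\model(\beta)$ eventually almost surely, proving \eqref{property: ASAMRP}. The main obstacle is the noise bound: the LIL rate $\sqrt{n\log\log n}$ is exactly what forces the stronger lower bound on $\alpha_n$ in \eqref{eq:lambdan}, which is why a mere $\alpha_n/\sqrt n\to\infty$ (enough for convergence in probability) does not suffice for the almost sure statement.
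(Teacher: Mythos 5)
Your proof is correct and follows essentially the same route as the paper's: the positivity condition via the SLLN argument of Lemma~\ref{lemma:s_n}, the decomposition of $\pi_n$ into the part converging a.s.\ to $CU_M(U_M'CU_M)^{-1}\tilde\Lambda_M$ and a noise term controlled by the law of the iterated logarithm (the paper's Lemma~\ref{lem:DN0}~(b) and \eqref{eq:lsup}), and the conclusion via the relative-interior condition \eqref{eq:Cri} inside the affine hull $\{v : U_M'v=\tilde\Lambda_M\}$. No gaps.
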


\begin{remark}\label{rem:asymp} Assume that \eqref{eq:Cri} is satisfied and set $\alpha_n=c\sqrt{n \log\log n}$ for $c>0$. Then \eqref{eq:lambdan} is not satisfied and with positive probability, the true SLOPE pattern is not recovered. See also Appendix \ref{appB}, where we present more refined results on the strong consistency of the SLOPE pattern. The $\log\log n$ correction in \eqref{eq:lambdan} comes from the law of iterated logarithm.
\end{remark}

\section{Simulation study}\label{sec:simulations}

This simulation study aims at illustrating  Theorems \ref{th:bound} and \ref{th:bound_n}.
Hereafter,  we consider the linear regression model $Y=X\beta+\varepsilon$, where $X\in \R^{n\times p}$ and $\varepsilon\in \R^n$ has i.i.d. $\mathrm{N}(0,1)$ entries. 
	Up to a constant, we choose components of  $\Lambda=(\lambda_1,\ldots,\lambda_p)'$ as expected values of ordered standard Gaussian statistics. Let $Z_{(1)}\ge \ldots \ge Z_{(p)}$ be ordered statistics of i.i.d. $\mathrm{N}(0,1)$ random variables.  
	An approximation of $\E[Z_{(i)}]$ for some $i\in \{1,\ldots,p\}$, denoted $E(i,p)$, is given hereafter (see \citep{harter1961expected} and references therein) 
	$$E(i,p)=-\Phi^{-1}\left(\frac{i-0.375}{p+1-0.750}\right),$$ 
	where $\Phi$ is the cumulative distribution function of an $\mathrm{N}(0,1)$ random variable. We  set 
\begin{equation}\label{lambdasim}
\Lambda=(\lambda_1,\ldots,\lambda_p)\;\;\mbox{with}\;\;\lambda_i=E(i,p)+E(p-1,p)-2E(p,p).
\end{equation}

\subsection{Sharp upper bound when \texorpdfstring{$X$}{X} is orthogonal}\label{sec:simc}

This example  illustrates Theorem \ref{th:bound}, which concerns the limiting probability of pattern recovery as  signal strength tends to infinity. 
 We assume that  $p=100$, $c$ is a positive real number,
$X\in \R^{n\times p}$ is orthogonal ($X'X=I_{100}$), and	$\beta \in \R^p$ is defined as follows:
\begin{equation}\label{beta}
\beta_1=\ldots=\beta_{25}=c,\;\; \beta_{26}=\ldots=\beta_{50}=-c/2,\; \beta_{51}=\ldots=\beta_{100}=0\;.
\end{equation}
To compute the value $\alpha_{0.95}$ of the scaling parameter  for which the upper bound is  $0.95$ we  note that ${\pi}_\alpha$ is a Gaussian vector having a 
 $$\mathrm{N}\left(X' (\tilde X_M')^+\tilde \Lambda_M,\alpha^{-2}X'(I-\tilde X_M \tilde X_M^+)X
\right)$$ distribution. Moreover,  since $M=\model(\beta)$ satisfies: $M_1=\ldots=M_{25}=2$, 
$M_{26}=\ldots=M_{50}=-1$ and $M_{51}=\ldots=M_{100}=0$ we have 
\begin{equation*}
X' (\tilde X_M')^+\tilde \Lambda_M=\mu,
    \end{equation*}
    where $\mu_1=\ldots=\mu_{25}=\frac{1}{25}\sum_{i=1}^{25} \lambda_i$, $\mu_{26}=\ldots=\mu_{50}=-\frac{1}{25}\sum_{i=26}^{50} \lambda_i$,\\ $\mu_{51}=\ldots=\mu_{100}=0$, and
    \begin{equation}
	\nonumber X'(I_n-\tilde X_M \tilde X_M^+)X=\begin{pmatrix} \Sigma & 0&0\\
          0&\Sigma&0\\
		0 &0& I_{p/2}
        \end{pmatrix},
        \end{equation}
    where $\Sigma$ is the matrix of the dimension $p/4\times p/4$ given by

\begin{equation}
    \Sigma=\begin{pmatrix}1-4/p&-4/p&\ldots&-4/p\\
		-4/p & 1-4/p & \ddots & \vdots\\
        \vdots & \ddots & \ddots & -4/p \\
		-4/p & \ldots &  -4/p & 1-4/p
	\end{pmatrix}.
\end{equation}
The matrix $\Sigma$ appears twice in the covariance structure, as both nonzero clusters have the same size of $p/4$.

Since the open SLOPE irrepresentability condition holds, there exists the value $\alpha_{0.95}$ such that
\[
    \mathbb{P}\big(J_\Lambda^*(\pi_{\alpha_{0.95}}) \leq 1\big) = 0.95.
\]

In practice, we simulated $50\,000$ instances of the random vector $Z \sim \mathrm{N}(0, X'(I-\tilde X_M \tilde X_M^+)X)$ and identified the value $\alpha_{0.95} = 9.45$, such that
\[
    \mathbb{P}\Big(J_\Lambda^*\Big(\mu + \tfrac1{\alpha_{0.95}} Z\Big) \le 1\Big) \approx 0.95.
\]
Figure \ref{fig:upper_bound_n_fixed_2} illustrates that indeed  the probability  of pattern recovery in the model \eqref{beta} by SLOPE with a regularizing sequence $9.45\Lambda$ converges to $0.95$ as $c$ increases to infinity.

\begin{figure}[!h]
	\centering
	\includegraphics[scale=0.5]{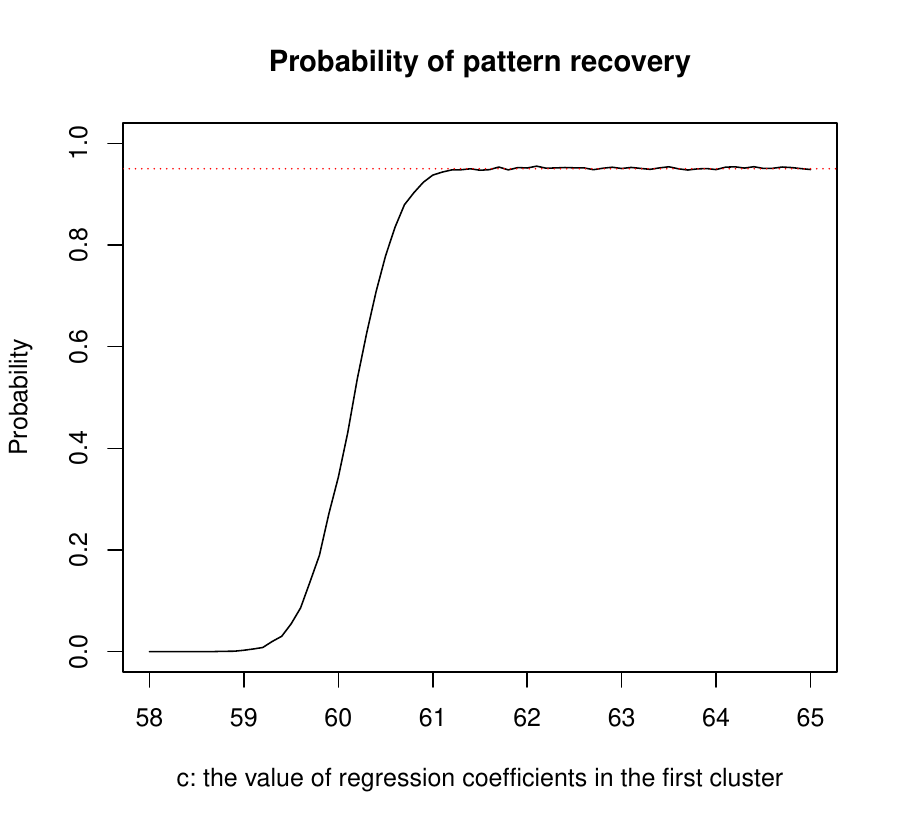}
	\caption{Probability of the pattern recovery in the model \eqref{beta} by SLOPE with a tuning parameter $\alpha\Lambda$, where $\alpha=9.45$ and $\Lambda$ is given in \eqref{lambdasim}.}
\label{fig:upper_bound_n_fixed_2}
\end{figure}

\subsection{\texorpdfstring{Limiting probability when $n\rightarrow\infty$}{Limiting probability when n to infty}}\label{Sec:n}

In this section, we illustrate Theorem~\ref{th:bound_n}, which describes the asymptotic performance of SLOPE as the sample size $n$ tends to infinity. We consider a setting in which both the predictors and the regression coefficients exhibit a clustered structure.

Specifically, we assume that the design covariance matrix for $p = 100$ regressors is block-diagonal:
\begin{equation}
C = \begin{pmatrix} 
    \Sigma & 0 & 0 & 0 \\
    0 & \Sigma & 0 & 0 \\
	0 & 0 & \Sigma & 0 \\
    0 & 0 & 0 & \Sigma  
        \end{pmatrix}
\end{equation}
where $\Sigma$ is a compound symmetry matrix of dimension $25 \times 25$, with $\Sigma_{i,i} = 1$ and $\Sigma_{i,j} = 0.8$ for $i \neq j$.

The true regression coefficient vector $\beta \in \mathbb{R}^p$ is defined as:
\begin{equation*}
\beta_1 = \ldots = \beta_{25} = 30,\quad 
\beta_{26} = \ldots = \beta_{50} = -30,\quad 
\beta_{51} = \ldots = \beta_{100} = 0.
\end{equation*}
Thus, in this example, the pattern $M = \model(\beta)$ satisfies:
\[
M_1 = \ldots = M_{25} = 1,\quad 
M_{26} = \ldots = M_{50} = -1,\quad 
M_{51} = \ldots = M_{100} = 0.
\]
Furthermore, we have:
\begin{equation*}
 CU_M (U_M' C U_M)^{-1} \tilde{\Lambda}_M=\mu,
\end{equation*}
where 
\[
\mu_1 = \ldots = \mu_{25} = \frac{1}{50} \sum_{i=1}^{50} \lambda_i,\quad
\mu_{26} = \ldots = \mu_{50} = -\mu_1,\quad
\mu_{51} = \ldots = \mu_{100} = 0.
\]
Finally, the covariance matrix of the vector $Z$ in Theorem \ref{th:bound_n} takes the form:
\begin{equation}
\Sigma_Z= C - C U_M (U_M' C U_M)^{-1} U_M' C =
\begin{pmatrix} 
\Sigma-U & U & 0 & 0\\
U & \Sigma - U & 0 & 0 \\
0 & 0 & \Sigma & 0 \\
0 & 0 & 0 & \Sigma
\end{pmatrix},
\end{equation}
where $U$ is the $25 \times 25$ matrix in which all entries are equal to $(1 + 24 \cdot 0.8)/50$.

By simulating 50\,000 instances of the multivariate normal vector $Z\sim \mathrm{N}(0, \Sigma_Z) $ we found the value $\alpha=2.89$, such that $\mathbb{P}\left(J_{\Lambda}^{\star}\left(\mu+\frac{1}{\alpha}Z\right)\leq 1\right) \approx 0.95$

According to Theorem \ref{th:bound_n}  SLOPE with the tuning sequence $2.89\Lambda\sqrt{n}$ -- where $\Lambda$ is specified in (\ref{lambdasim}) -- should recover the true pattern with the probability 0.95 as $n \to \infty$. This phenomenon is illustrated in Figure \ref{fig:patt_recovery_n_to_infty}, where the probability of the pattern recovery stabilizes at 0.95 for $n\geq 1500$.  
\begin{figure}[h!]
\centering
\includegraphics[scale=1]{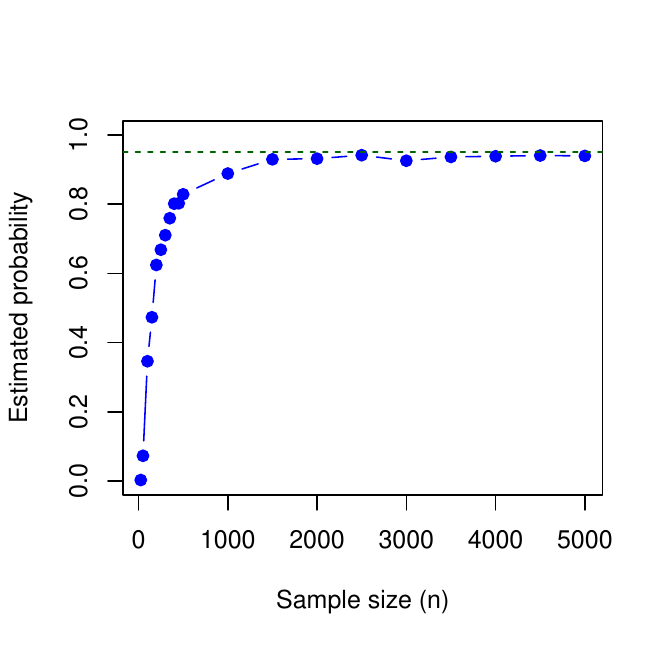}
\caption{Estimates of probability of pattern recovery by SLOPE as a function of $n$. The scaling parameter $\alpha_{0.95}=2.89$ is chosen to fix the limiting probability of pattern recovery at $0.95$. }
\label{fig:patt_recovery_n_to_infty}
\end{figure}

Additionally, Figure~\ref{fig:RMSE_n_to_infty} illustrates the root mean square error (RMSE) of the SLOPE estimator and compares it to the RMSE of both the ordinary least squares (LS) estimator and the debiased SLOPE estimator. The latter is obtained by performing a least squares fit using the reduced model selected by SLOPE, \textit{i.e.}, using the design matrix $\tilde{X} = XU_{\hat{M}}$.

As shown in the figure, SLOPE consistently outperforms LS in terms of RMSE in this example. Moreover, the estimation accuracy can be further improved by debiasing SLOPE -- specifically, by applying least squares estimation within the reduced model. In the setting considered here, this two-stage version of SLOPE achieves near-perfect performance, with a negligible estimation error.

\begin{figure}[h!]
\centering
\includegraphics[scale=1]{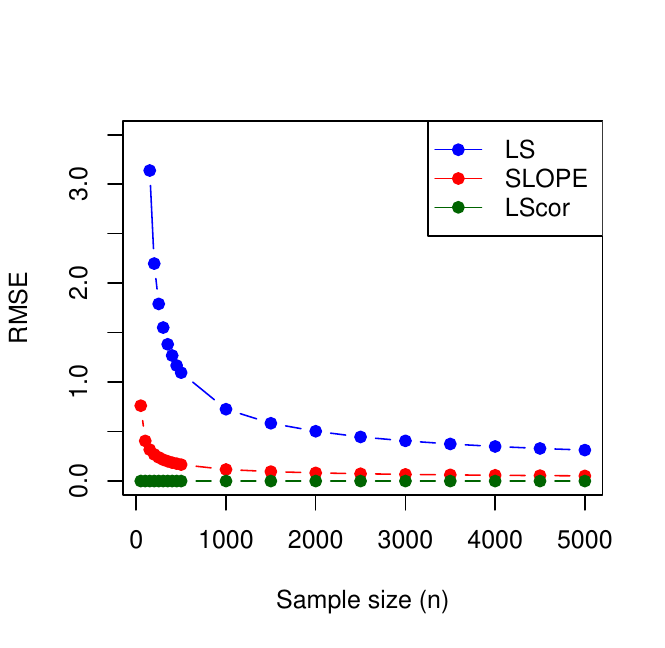}
\caption{Estimated Root Mean Squares of the estimators obtained by Least Squares (LS), SLOPE with the sequence of parameters as in Figure \ref{fig:patt_recovery_n_to_infty} (SLOPE) and the Least Squares estimators using SLOPE dimensionality reduction, \textit{i.e.}, $\tilde X=XU_{\hat M}$ (LScor).}
\label{fig:RMSE_n_to_infty}
\end{figure}

\subsection{Small \texorpdfstring{$n$}{n} performance}

In Figure \ref{fig:Est_n_small} we present a representative example of the performance of SLOPE for the setting from Section \ref{Sec:n} and a very small $n=25$. We compare SLOPE to Fused LASSO, since the coefficients are clustered according to the proximity of variables in the design matrix. For SLOPE we use the sequence of tuning parameters as proposed in Section \ref{Sec:n}, while the Fused LASSO is tuned manually to minimize the estimation error. We used the {\it fusedlasso} function from the {\it genlasso} library and manually selected $\gamma=0.25$ as the value for which we obtained the smallest RMSE over the range of $\lambda$ values automatically proposed by the {\it fusedlasso} algorithm. We can observe that while SLOPE cannot precisely estimate the pattern, shrinkage towards the common absolute mean in both clusters allows for obtaining a very precise estimation of $\beta$, which is substantially more accurate than the fused lasso estimator. We believe that this is due to the fact that SLOPE effectively shrinks both clusters towards the same absolute value, while fused LASSO does not have this advantage. 

\begin{figure}[h!]
\centering
\includegraphics[scale=1]{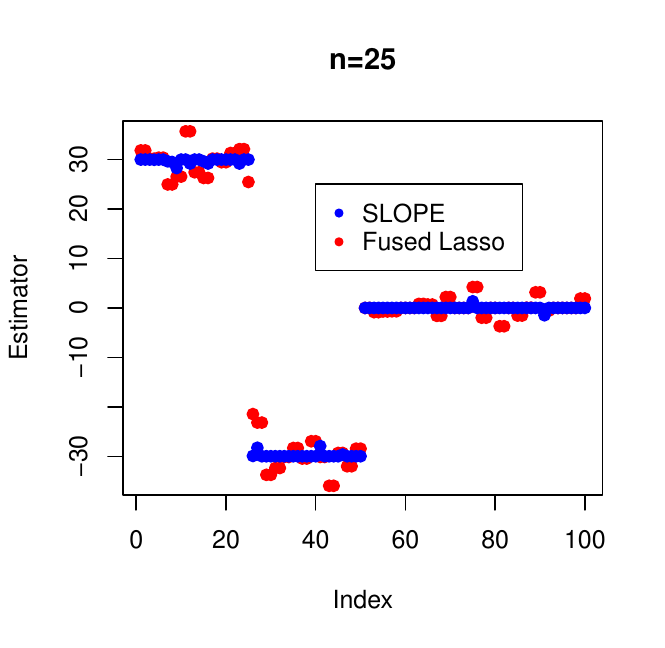}
\caption{Comparison of SLOPE and Fused LASSO estimators for $n = 25$ and $p = 100$. Among the $100$ regression coefficients, $50$ are nonzero: the first $25$ are equal to $30$, and the next $25$ are equal to $-30$.
}
\label{fig:Est_n_small}
\end{figure}

\section{Discussion} \label{sec:discussion}
In this article we make an important step in understanding the clustering properties of SLOPE and we have shown that the irrepresentability condition provides theoretical guarantees for SLOPE pattern recovery.
However, this by no means closes the topic of the SLOPE pattern recovery. Similarly to the irrepresentability condition for LASSO, the SLOPE irrepresentability condition is rather stringent and imposes a strict restriction on the number of nonzero clusters in $\beta$. On the other hand, in \citep{tardivel2022sign} it is shown that a much weaker condition for LASSO is required to separate the estimators of the null components of $\beta$ from the estimators of nonzero regression coefficients. This  condition, called accessibility (also called identifiability), requires that the vector $\beta$ has the minimal $\ell_1$ norm among all vectors $\gamma$ such that $X\beta=X\gamma$. Thus, when the accessibility condition is satisfied one can recover the sign of $\beta$ by thresholding LASSO estimates.
Empirical results from \citep{tardivel2022sign} suggest that this weaker condition is also sufficient for the sign recovery by the adaptive LASSO \citep{zou}.  In this case
rescaling the design matrix according to the initial estimates of regression coefficients modifies
the original irrepresentability condition, so it can be satisfied for a given specific true
sign vector of regression coefficients.    In the recent article \citep{graczyk2023unified} it is shown that a similar result holds for SLOPE, whose accessibility condition holds if the vector $\beta$ has the smallest sorted $\ell_1$ norm among all vectors $\gamma$ such that  $X\beta=X\gamma$. 
In \citep{graczyk2023unified} or in \cite[Theorem 2.2]{tardivel2024etude} it is shown that when the accessibility condition is satisfied then applying the proximal operator of the sorted $\ell_1$ norm to SLOPE allows to recover the pattern of the regression coefficients.
\begin{figure}[!ht]
\centering
\includegraphics[scale=0.55]{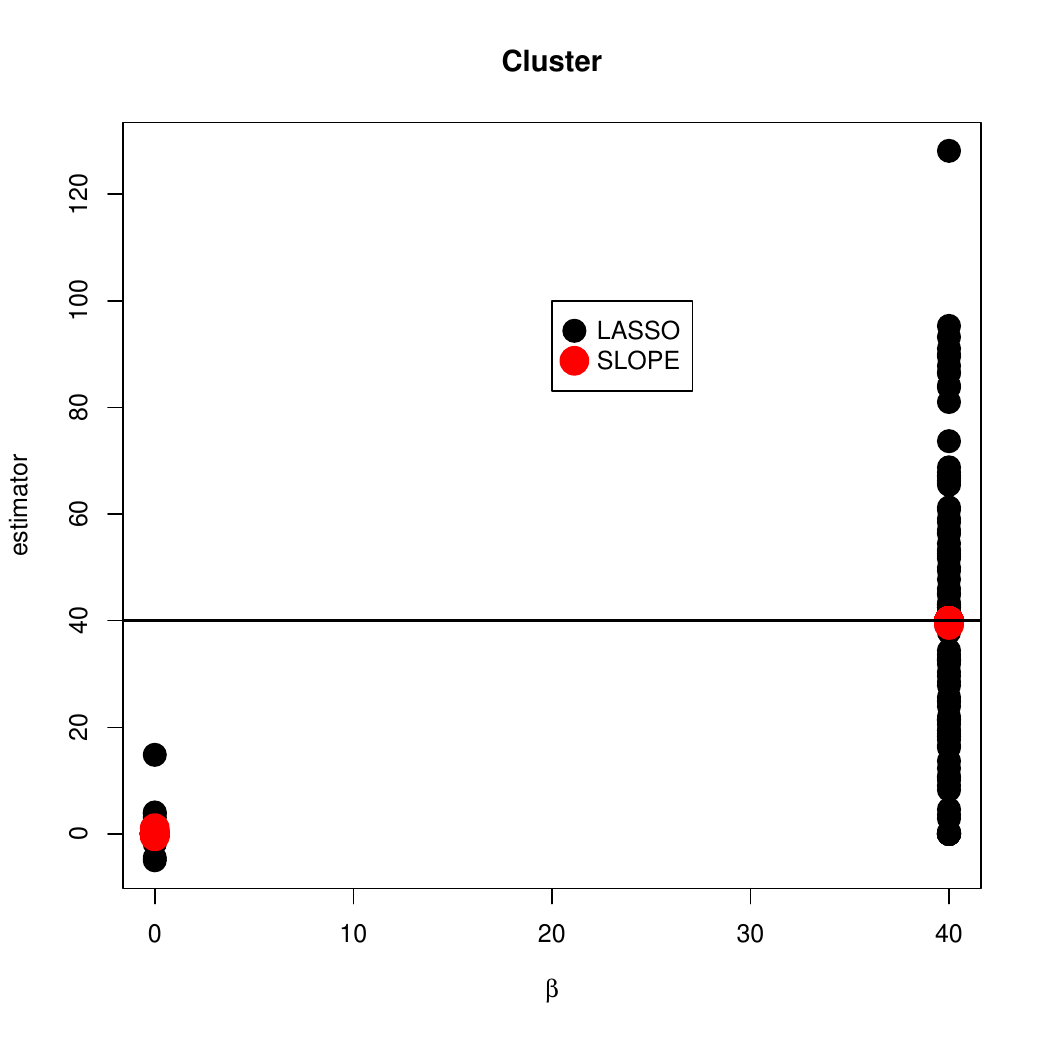}
\caption{Comparison of LASSO and SLOPE when the cluster structure is present in the data. Here $n=100$, $p=200$, and the correlation between $i^{th}$ and $j^{th}$ column of $X$ is equal to $0.9048^{|i-j|}$.  First $k=100$ columns of $X$ are associated with $Y$ and their nonzero regression coefficients are all equal to $40$. The SLOPE and LASSO irrepresentability conditions are not satisfied, but SLOPE, contrary to LASSO, satisfies the  accessibility condition.}
\label{fig:discussion}
\end{figure}
Figure \ref{fig:discussion} illustrates this phenomenon and shows that the accessibility condition for SLOPE can be much less restrictive than the accessibility condition for LASSO. In this example the matrix $X$ and the vector $Y$ are generated as in example illustrated in Figure \ref{fig:introduction} and the only difference is that now first $k=100=n$ regression coefficients are all equal to $40$.  In this situation the accessibility condition for LASSO is not satisfied and LASSO cannot properly separate the null and nonzero regression coefficients. Also, despite the selection of the tuning parameter so as to minimize the squared estimation error, the precision of LASSO estimates is very poor. As far as SLOPE is concerned, the irrepresentability condition is not satisfied but the accessibility condition holds. Thus, while SLOPE cannot properly identify the pattern, it  
estimates $\beta$ with such a good precision that the difference between the estimated and the true pattern is hardly visible on the graph. These favorable ranking and estimation properties of the SLOPE method enable pattern recovery through appropriately selected thresholded versions of SLOPE. We also expect that the mathematical understanding of SLOPE irrepresentability condition presented in this article will lead to the development of efficient adaptive versions of SLOPE, with improved estimation and pattern recovery properties.

The results presented in this article pave the way for a full understanding of the SLOPE pattern recovery properties. We expect that our SLOPE irrepresentability condition will be a basic block for proving further results on the pattern recovery of SLOPE and adaptive SLOPE in the high-dimensional regime. We also look forward to  research on other statistical models and loss functions. One specific focus of interest is the graphical SLOPE  (see \cite{graphSLOPE, Riccobello2025}), which could be used for identification of colored graphical models \cite{colors}, with specific parameter sharing patterns in the  precision matrix.  Such repetitive patterns occur naturally in many situations, \textit{e.g.}, in the case of the autoregressive type of dependence between variables in the database or  when variables are influenced by the same structural factors. We believe that an efficient exploitation of these unknown patterns by SLOPE will lead to a great reduction of the number of parameters and improvement of the graphical models estimation properties. 

Finally, we would like to recall that an interest in identifying the parameter sharing patterns goes beyond classical parametric models and is prevalent also in the modern machine learning community. As mentioned in the introduction, the prominent example is provided by the Convolutional Neural Networks (CNN), where the ``parameter sharing'' has made it possible to dramatically improve computational and statistical efficiency. 
While the parameter sharing in CNN is driven entirely by the expert knowledge, regularization by SLOPE allows to identify and exploit patterns based on the data. In principle one can also use SLOPE in the Bayesian context and combine the information in the data with the imprecise prior knowledge on possible parameter sharing patterns  (see \cite{ABSLOPE} for the preliminary version of adaptive Bayesian SLOPE). 
It is expected that recent developments in efficient implementations of the SLOPE optimization algorithm (see, \textit{e.g.} \cite{MBzeSzwedami,jonas}) will soon allow for an integration of SLOPE regularization with the deep neural network architectures.

\appendix 

\section{Proofs} \label{sec:proofs}

\subsection{Proof of Proposition \ref{prop:affine}}
Note that if $M=0$, then the statement holds by~\eqref{eq:subdiff_norm}. Thus we may later assume that $M\neq 0$. To ease the notation, we write $\tilde{\Lambda}$ instead of $\tilde{\Lambda}_M$. The elements of $\tilde{\Lambda}$ are denoted by $\tilde{\Lambda}_l$, $l=1,\ldots,k$.
Let $k=\|M\|_\infty$. Before proving  Proposition \ref{prop:affine} note that, by assumption, there exists $s\in \R^{k+}$  such that 
$b=U_Ms$. Consequently, $|b|_{\downarrow}=U_{|M|_{\downarrow}}s$ 
and thus
\[J_\Lambda(b)=\lambda_1|b|_{(1)}+\ldots+\lambda_p|b|_{(p)}=\Lambda'U_{|M|_{\downarrow}}s= \tilde \Lambda's= s_1\tilde\Lambda_1+\ldots+s_k\tilde \Lambda_k.
\]
Moreover, with $p_l=|\{i\colon |M_i|\geq k+1-l\}|$, we have $\tilde\Lambda_l=	\lambda_{p_{l-1}+1}+\ldots+\lambda_{p_l}$, $l=1,\ldots,k$.

\begin{proof}[Proof of Proposition \ref{prop:affine}]\ \\
	First we prove the inclusion
	$\partial{J_\Lambda}(b)\subset  \left\{ v\in\R^{p}\colon J_\Lambda^*(v)\le 1 \mbox{ and } U_{M}' v =  \tilde \Lambda \right\}$. 
	Let $v\in \partial{J_\Lambda}(b)$. Since $J^*_{\Lambda}(v)\le 1$ (see \eqref{eq:subdiff_norm})
	then, by definition of the dual sorted $\ell_1$ norm,  for all $j\in \{1,2,\ldots,p\}$ we have
	$\sum_{i=1}^{j}|v|_{(i)}\le\sum_{i=1}^{j}\lambda_i$. 
	It remains to prove that $U_M'v=\tilde \Lambda$.
	For all $l\in \{1,\ldots,k\}$ we have the following inequality
	\begin{multline}
		\label{eq:main}
		\sum\limits_{i=1}^{l} [U'_M v]_i =\sum\limits_{i\colon |M_i|\ge k+1-l}\sign(M_i)v_i \le \sum\limits_{i\colon |M_i|\ge k+1-l}|v_i|\\ \le
	\sum\limits_{i=1}^{p_l}|v|_{(i)} 
		\le 
		\sum\limits_{i=1}^{p_l}\lambda_i 
		= \sum\limits_{i=1}^{l} \tilde\Lambda_i.
	\end{multline}
	Note that
	\begin{multline*}
		b'v=(U_Ms)'v= \sum_{i=1}^k s_i [U'_M v]_i = 
		\sum_{l=1}^{k-1} (s_l-s_{l+1})\sum_{i=1}^{l} [U'_M v]_i + s_k\sum_{i=1}^{k}[U'_M v]_i 
		\\
		\leq 
		\sum_{l=1}^{k-1} (s_l-s_{l+1})\sum_{i=1}^{l} \tilde{\Lambda}_i + s_k\sum_{i=1}^{k}\tilde{\Lambda}_i= \sum_{l=1}^{k} s_l\tilde{\Lambda}_l= J_{\Lambda}(b).
	\end{multline*}
	Moreover, since $v\in\partial{J_\Lambda}(b)$, we have $b'v = J_{\Lambda}(b)$ (see \eqref{eq:subdiff_norm}).
	Therefore
	\[
	\sum_{i=1}^l [U'_M v]_i = \sum_{i=1}^l \tilde\Lambda_i \quad\mbox{ for }\quad l=1,\ldots,k
	\]
	and thus the inequalities given in \eqref{eq:main} are the equalities. Thus			
	\[ [U'_M v]_l =\tilde\Lambda_l\quad\mbox{ for }\quad l=1,\ldots,k
	\]
	and hence that $U'_M v = \tilde\Lambda$.
	
	Now we prove the other inclusion,
	$\partial{J_\Lambda}(b)\supset \left\{ v\in\R^{p}\colon J_\Lambda^*(v)\le 1 \mbox{ and } U_{M}' v =  \tilde \Lambda_M \right\}$. Assume that $v\in \R^p$ satisfies $J_\Lambda^*(v)\le 1$ and $U'_M v=\tilde\Lambda$.  To prove that $v\in \partial{J_\Lambda}(b)$ it remains to establish  that $b'v = J_{\Lambda}(b)$ 
	(see \eqref{eq:subdiff_norm}). Since  $b=U_Ms$, we have
	\begin{equation*}
		b'v = (U_M s)'v = s' U'_M v = 
		s' \tilde\Lambda=J_\Lambda(b).
	\end{equation*}
\end{proof}

\subsection{Proof of Proposition \ref{prop:subdiff_char}}
\begin{lemma}
	\label{lem:small_lemma}
	Let $\Lambda\in \R^{p+}$ and $b\in \R^p$. If  $\Lambda\in \partial{J_\Lambda}(b)$ then $b_1\ge \ldots \ge b_p\ge 0$.
\end{lemma}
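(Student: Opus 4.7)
The plan is to read off the conclusion directly from the dual-norm characterization \eqref{eq:subdiff_norm}, which states that $\Lambda\in\partial{J_\Lambda}(b)$ is equivalent to the pair of conditions $J_\Lambda^*(\Lambda)\le 1$ and $\Lambda'b=J_\Lambda(b)$. The first condition is automatic here: by definition of the dual sorted $\ell_1$ norm, every partial-sum ratio $\sum_{i=1}^j\lambda_i/\sum_{i=1}^j\lambda_i$ equals $1$, hence $J_\Lambda^*(\Lambda)=1$. So the whole content of the hypothesis reduces to the scalar identity $\Lambda'b=J_\Lambda(b)$.

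Next, I would sandwich $\Lambda'b$ between two natural upper bounds using that $\Lambda=(\lambda_1,\ldots,\lambda_p)'$ is strictly decreasing and strictly positive. Namely,
\[
\Lambda'b \;=\; \sum_{i=1}^p \lambda_i b_i \;\le\; \sum_{i=1}^p \lambda_i |b_i| \;\le\; \sum_{i=1}^p \lambda_i |b|_{(i)} \;=\; J_\Lambda(b).
\]
The first inequality is equality if and only if $b_i\ge 0$ for every index $i$ (since every $\lambda_i>0$). The second inequality is the classical rearrangement inequality applied to the sequences $(\lambda_i)$ and $(|b_i|)$; because $\lambda_1>\lambda_2>\ldots>\lambda_p$ is strictly decreasing, equality forces the sequence $(|b_i|)$ to be arranged nonincreasingly, i.e.\ $|b_1|\ge |b_2|\ge \ldots \ge |b_p|$.

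Since by hypothesis the two extremes of the chain coincide, both inequalities are equalities, and combining the two equality cases gives $b_i=|b_i|\ge 0$ together with $b_1\ge b_2\ge\ldots\ge b_p$, which is exactly $b_1\ge \ldots\ge b_p\ge 0$. There is no real obstacle: the only point that merits care is invoking the strict version of the rearrangement inequality to pin down the order of $(|b_i|)$, which is available precisely because $\Lambda$ is assumed to have strictly decreasing coordinates (an assumption now standing globally, per the paragraph following Proposition \ref{prop:subdiff_char}).
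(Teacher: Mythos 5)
Your proof is correct and rests on the same two ingredients as the paper's: strict positivity of the $\lambda_i$ to force $b_i\ge 0$, and the strict form of the rearrangement inequality (available because $\Lambda\in\R^{p+}$ has pairwise distinct coordinates) to force the ordering. The paper packages this as a proof by contradiction, exhibiting dual-feasible perturbations $\check\pi$ of $\Lambda$ (a sign flip of one coordinate, resp.\ a transposition of two coordinates) with $\check\pi'b>\Lambda'b$; your direct sandwich $\Lambda'b\le \sum_{i}\lambda_i|b_i|\le J_\Lambda(b)$ together with its equality cases is the same argument read forwards, and is if anything slightly cleaner since it works from the explicit formula for $J_\Lambda$ and avoids having to verify $J_\Lambda^*(\check\pi)\le 1$ for the perturbed vectors.
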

\begin{proof}
	Let us assume that  $b_i<0$ for some $i\in \{1,\ldots,p\}$. For 
 \[
 \check \pi=(\lambda_1, \ldots,  \lambda_{i-1}, -\lambda_i,
	\lambda_{i+1}, \ldots, \lambda_p)
 \]
 we have $J_\Lambda^*(\check \pi)\le 1$ and 
	one may deduce  that 
	$$\Lambda'b<\check\pi'b\le \max \{\pi'b\colon J_\Lambda^*(\pi)\le 1 \}=J_\Lambda(b).$$
	Consequently $\Lambda\notin \partial{J_\Lambda}(b)$ leading to a contradiction. Let us assume that $b_i<b_j$ for some $1\le i <j\le p$. Let us define 
	$\check \pi$, where $J_\Lambda^*(\check \pi)\le 1$,  as follows 
	\[
	\check \pi_k=\begin{cases}
		\lambda_k & \mbox{ if } k\neq i, k\neq j,\\
		\lambda_j & \mbox{ if } k= i,\\
		\lambda_i & \mbox{ if } k= j,
	\end{cases}\qquad k = 1,\ldots,p.
	\]
	Since $\lambda_i>\lambda_j$, by the rearrangement inequality we have 
	$\lambda_ib_i+\lambda_jb_j<\lambda_jb_i+\lambda_ib_j$.
	Thus, one may deduce the following inequality
	\[
	\Lambda'b<\check \pi'b\le \max \{\pi'b\colon \pi\in \R^p\,\, J_\Lambda^*(\pi)\le 1 \}=J_\Lambda(b).
	\]
	Consequently $\Lambda\notin \partial{J_\Lambda}(b)$ leading to a contradiction.
\end{proof}

Let  $\psi$ be   an orthogonal transformation defined by \[\psi\colon  b \in \R^p\mapsto (v_1b_{r(1)},\ldots,v_pb_{r(p)})\] where $v_1,\ldots,v_p\in \{-1,1\}$ and $r$ is a permutation on $\{1,\ldots,p\}$.
Before proving Proposition \ref{prop:subdiff_char}  let us recall that for any $a,b\in \R^p$ we have $J_\Lambda(b)= J_\Lambda(\psi(b))$, 
$J_\Lambda^*(b)= J_\Lambda^*(\psi(b))$ and $b'a=\psi(b)'\psi(a)$ implying thus 
$\partial{J_\Lambda}(\psi(b))=\psi(\partial{J_\Lambda}(b))$. 

\begin{proof}[Proof of Proposition \ref{prop:subdiff_char}]
	If $\model(a)=\model(b)$ then, according to Proposition \ref{prop:affine},
	$\partial{J_\Lambda}(a)=\partial{J_\Lambda}(b)$. 
	Let us set $M=\model(a)$ and $\tilde M=\model(b)$,
	it remains to prove that if 
	$\partial{J_\Lambda}(a)= \partial{J_\Lambda}(b)$ then $M= \tilde M$.  Since the subdifferential $\partial{J_\Lambda}(a)$ depends on $a$ only through its pattern, then by Proposition \ref{prop:affine} we have $\partial{J_\Lambda}(a)=\partial{J_\Lambda}(M)$ and similarly  $\partial{J_\Lambda}(b)=\partial{J_\Lambda}(\tilde{M})$.
	
	First let us assume that $M= |M|_{\downarrow}$ namely $M_1 \ge M_2 \ge \ldots \ge M_p \ge 0$. In this case, $M'\bLambda=J_{\bLambda}(M)$ and hence $\bLambda=(\lambda_1,\ldots,\lambda_p)' \in \partial{J_\Lambda}(M)$. 	Since $\partial{J_\Lambda}(M)=\partial{J_\Lambda}(\tilde M)$, it follows  from Lemma \ref{lem:small_lemma}  that $\tilde M_1 \ge  \ldots \ge \tilde M_p \ge 0$, because $\bLambda \in \partial{J_\Lambda}(\tilde M)$. To prove that $M=\tilde M$, first let us establish that $M_p=\tilde M_p=0$ or $M_p=\tilde M_p=1$. If $M_p=0$ and $\tilde M_p=1$ then, let us set  $\check \pi=(\lambda_1,\ldots,\lambda_{p-1},0)'$, where $J_\Lambda^*(\check \pi)\le 1$. 
	Because
	\[
	J_\Lambda(M)=\Lambda'M=\check \pi'M \mbox{ and }    J_\Lambda(\tilde M)=\Lambda'\tilde M>\check \pi' \tilde M
	\]
	we have $\check \pi \in \partial{J_\Lambda}(M)$ and 
	$\check \pi \notin \partial{J_\Lambda}(\tilde M)$ which provides a contradiction. We proceed analogously for $M_p=1$ and $\tilde M_p=0$. To complete the proof that $M=\tilde M$, let us establish that $M_i= M_{i+1}$ and   $\tilde M_i=\tilde M_{i+1}$
	or $M_i> M_{i+1}$ and   $\tilde M_i>\tilde M_{i+1}$.
	If $M_i= M_{i+1}$ and   $\tilde M_i>\tilde M_{i+1}$
	then, let us define 
	$\check \pi$, where $J_\Lambda^*(\check \pi)\le 1$, as follows 
	$$
	\check \pi_k=\begin{cases}
		\lambda_k & \mbox{ if } k\neq i, k\neq i+1,\\
		\lambda_{i+1} & \mbox{ if } k= i,\\
		\lambda_i & \mbox{ if } k= i+1,
	\end{cases}\qquad  k =1,\ldots,p.
	$$
	Since $\lambda_i M_i+ \lambda_{i+1}M_{i+1}=\lambda_{i+1}M_i+\lambda_iM_{i+1}$ and 
	$\lambda_i \tilde M_i+\lambda_{i+1}\tilde M_{i+1}>
	\lambda_{i+1} \tilde M_i+\lambda_{i}\tilde M_{i+1}$ then
	\[
	J_\Lambda(M)=\Lambda'M=\check \pi'M \mbox{ and }    J_\Lambda(\tilde M)=\Lambda'\tilde M>\check \pi' \tilde M.
	\]
	Consequently $\check \pi \in \partial{J_\Lambda}(M)$ and 
	$\check \pi \notin \partial{J_\Lambda}(\tilde M)$ which provides a contradiction. We proceed analogously for $M_i> M_{i+1}$ and   $\tilde M_i=\tilde M_{i+1}$. Finally, if $M\neq |M|_{\downarrow}$  then let us pick an orthogonal transformation $\psi$ as defined above for which
	$\psi(M)=|M|_{\downarrow}$. Since $\partial{J_\Lambda}(M)=\partial{J_\Lambda}(\tilde M)$ implies that
	$\partial{J_\Lambda}(\psi(M))=\partial{J_\Lambda}(\psi(\tilde M))$,  the first part of the proof establishes that $\psi(\tilde M)=\psi(M)$ and thus $M=\tilde M$.  
	
\end{proof}

Recall that $J_\Lambda^*(x)\leq 1$ if and only if
\begin{equation}\label{eq:IC_j}
	|x|_{(1)}+\ldots+ |x|_{(j)} \le \lambda_1+\ldots+ \lambda_j,
	\qquad\qquad j=1,\ldots, p.
\end{equation}
The following result follows from the proof of Proposition \ref{prop:affine}. 
\begin{proposition}\label{pro:simple}
	Assume $x\in\R^p$ satisfies $J_\Lambda^*(x)\leq 1$ and let $b\in\R^p$. Then, $x$ belongs to $\partial J_\Lambda(b)$ if and only if the following three conditions hold true:
	\begin{enumerate}
		\item  If $b_i\neq 0$, then $\sign(x_i)=\sign(b_i)$,  
		\item If $|b_i| > |b_j|$ then $|x_i| \geq |x_j|$,
		\item The equalities hold in \eqref{eq:IC_j} for $j\in\{ n_1, n_2, \ldots, n_k\}$,
		where $n_j=|\{i\colon |M_i|\geq k+1-j\}|$ with $(M_1,\ldots,M_p)'=\model(b)$.
	\end{enumerate}
\end{proposition}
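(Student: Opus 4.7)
The plan is to reduce to Proposition~\ref{prop:affine}, which, together with the standing hypothesis $J^*_\Lambda(x)\le 1$, tells us that $x\in\partial J_\Lambda(b)$ is equivalent to the single equation $U_M' x=\tilde\Lambda_M$, where $M=\model(b)$ and $k=\|M\|_\infty$. Hence the task becomes: show that $U_M' x=\tilde\Lambda_M$ is equivalent to the three conditions (1)--(3). The case $b=0$ is trivial (all three conditions are vacuous and $\partial J_\Lambda(0)=\{v:J^*_\Lambda(v)\le 1\}$), so I focus on $b\neq 0$.

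The workhorse is the chain of inequalities established in the proof of Proposition~\ref{prop:affine}: for each $l\in\{1,\ldots,k\}$,
\begin{equation*}
\sum_{i=1}^l [U_M' x]_i = \sum_{i:\,|M_i|\ge k+1-l}\sign(M_i)\,x_i \le \sum_{i:\,|M_i|\ge k+1-l}|x_i| \le \sum_{i=1}^{n_l}|x|_{(i)} \le \sum_{i=1}^{n_l}\lambda_i = \sum_{i=1}^l \tilde\Lambda_i.
\end{equation*}
The identity $U_M' x=\tilde\Lambda_M$ is equivalent to equality between the two outer members of the chain for every $l$, and since each intermediate inequality is non-strict this is equivalent to equality in every intermediate step for every $l$. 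I then match the three equalities with the three conditions: equality in the first inequality (for every $l$, hence in particular $l=k$) corresponds to $\sign(M_i)x_i=|x_i|$ whenever $M_i\neq 0$, which, modulo the subtlety below, is condition (1); equality in the second inequality for every $l$ says that the top $n_l$ absolute values of $x$ are realised on $\{i:|M_i|\ge k+1-l\}$, which is precisely condition (2); equality in the third inequality for every $l$ is the equality in \eqref{eq:IC_j} at $j=n_l$, i.e.\ condition (3).

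The only delicate point is the discrepancy between condition (1) as stated (strict sign equality, forcing $x_i\neq 0$ when $b_i\neq 0$) and what the chain-of-equalities directly yields ($\sign(M_i)x_i=|x_i|$, which a priori allows $x_i=0$). To bridge this, I would combine condition (3) at $j=n_l$ with the bound $J^*_\Lambda(x)\le 1$ applied at $j=n_l-1$ and subtract, obtaining $|x|_{(n_l)}\ge \lambda_{n_l}>0$ for every $l=1,\ldots,k$; together with condition (2), this forces $|x_i|>0$ for all $i\in\supp(b)$, so that the weak alignment $\sign(M_i)x_i=|x_i|$ upgrades to the strict condition (1). The reverse direction is immediate: assuming (1)--(3), each of the three intermediate inequalities becomes an equality for every $l$, and summing telescopically gives $U_M' x=\tilde\Lambda_M$. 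I expect this nonzeroness argument to be the only nontrivial point of the proof; the rest is essentially a repackaging of the chain-of-inequalities step from the proof of Proposition~\ref{prop:affine}.
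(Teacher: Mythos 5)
Your proof is correct and follows essentially the same route as the paper, whose entire argument is the remark that the result ``follows from the proof of Proposition~\ref{prop:affine}'' --- i.e.\ exactly your equality-case analysis of the chain \eqref{eq:main}, with the three intermediate equalities matched to conditions (1)--(3). You in fact go further than the paper by explicitly justifying the strict form of condition (1) via $|x|_{(n_l)}\ge\lambda_{n_l}>0$, a detail the paper's one-line proof leaves implicit; that argument is sound.
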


\subsection{Proof of Theorem \ref{th:PT+}}

\begin{proof}[Proof of Theorem~\ref{th:PT+}]
	{\it Necessity.} Let us assume  that there exists $\hat \beta\in S_{X,\Lambda}(Y)$ with $\model(\hat \beta)=M$. Consequently, $\hat \beta=\U_Ms$ for some $s\in \R^{k+}$.
	
	By Proposition \ref{prop:subdiff_char},
	$\X'(\Y-\X\hat \beta)\in \partial{J_\bLambda}(\hat{\beta})=\partial{J_\bLambda}(M)$.
	Multiplying this inclusion by $\U_M'$, 
	due to \eqref{eq:aff}, we get $\tilde \X_M'(\Y-\X\hat \beta)=\tilde \bLambda_M$  and so
	\begin{align}\label{eq:lastE}
		\tilde \X_M'\Y- \tilde \bLambda_M = \tilde \X_M'\X\hat \beta =\tilde \X_M'\tilde\X_M s.
	\end{align}
	The positivity condition is proven.  
	
	We apply $(\tilde \X_M')^+$ from the left to \eqref{eq:lastE}
	and use the fact that
	$\tilde{\P}_M=(\tilde \X_M')^+\tilde \X_M'$ is the projection onto $\col(\tilde \X_M)$. Since $\X\hat \beta\in \col(\tilde \X_M)$, we have $\tilde{\P}_M\X\hat \beta=\X\hat \beta$. 
	Thus,
	\[
	\tilde{\P}_M \Y- (\tilde \X_M')^+  \tilde \bLambda_M = \X\hat\beta.
	\]
	The above equality  gives the  subdifferential condition:
	\begin{eqnarray}\label{eq:pipi}
		\partial{J_\bLambda}(M) \ni \X'(\Y-\X\hat \beta)&=&	\X'(\Y-(\tilde{\P}_M\Y-  (\tilde \X_M')^+\tilde \bLambda_M))\\
		\nonumber	&=&\X'(\tilde \X_M')^+\tilde\bLambda_M+\X'(\I_n-\tilde{\P}_M)Y
		= \pi.
	\end{eqnarray}
	{\it Sufficiency.} Assume that  the positivity condition and the subdifferential conditions hold true.
	Then, by the positivity condition, one may pick 
	$s\in \R^{k+}$ for which 
	\begin{equation}
		\label{eq:cone}
		\tilde \bLambda_M = 	\tilde \X_M'\Y- \tilde \X_M'\tilde \X_M s.
	\end{equation} 
	Let us show that $\U_M s\in S_{X,\Lambda}(Y)$. By definition of $\U_M$, we have $\model(\U_Ms)=M$ thus $\partial{J_\Lambda}(U_Ms)=\partial{J_\Lambda}(M)$. Moreover,  using  \eqref{eq:pipi} and \eqref{eq:cone} one may deduce
	\begin{eqnarray*}
		\partial{J_\Lambda}(U_Ms) \ni\pi&=& \X'(\Y-\tilde{\P}_M\Y+  (\tilde \X_M')^+\tilde \bLambda_M)\\
		&=& \X'(\Y-\tilde{\P}_M\Y + (\tilde \X_M')^+(\tilde X_MY-\tilde X_M'\tilde X_Ms))\\
		&=& X'(Y-XU_Ms).
	\end{eqnarray*}	
	Consequently $\U_M s\in S_{X,\Lambda}(Y)$.
\end{proof}

\subsection{Proof of Corollaries \ref{cor:mainmr0}
and \ref{cor:mainmr0bis}}

\begin{proof}[Proof of Corollary \ref{cor:mainmr0}]
We will prove the implications $i) \Rightarrow iii) \Rightarrow iv) \Rightarrow ii) \Rightarrow i)$.\\\\
$i)\Rightarrow iii):$  Suppose there exist $\Lambda \in \R^{p+}$ and $\hat \beta\in S_{X,\Lambda}(X\beta)$ such that $\model(\hat \beta)=\model(\beta)$.   Then, by Theorem \ref{th:PT+} and since $\varepsilon=0$, 
	the subdifferential condition reads as:
	$X'(\tilde{X}_M')^{+}\tilde \Lambda_M\in \partial{J_\Lambda}(M)$.\\\\
$iii)\Rightarrow iv):$ The condition $X'(\tilde{X}_M')^{+}\tilde \Lambda_M\in \partial{J_\Lambda}(M)$
remains true when $\Lambda\in \R^{p+}$ is scaled by a scalar parameter $\alpha>0$. Indeed 
 $$X'(\tilde{X}_M')^{+}\tilde{(\alpha \Lambda)}_M= \alpha X'(\tilde{X}_M')^{+}\tilde{\Lambda}_M \in \alpha \partial{J_\Lambda}(M)=\partial{J_{\alpha \Lambda}}(M)$$
 Therefore, up to scaling of $\Lambda$,  for any $\lambda_1^0>0$ 
 there exists $\Lambda \in \R^{p+}$ with $\lambda_1<\lambda_1^0$, such that $X'(\tilde{X}_M')^{+}\tilde \Lambda_M\in \partial{J_\Lambda}(M)$.\\\\
 $iv)\Rightarrow ii):$ 
 To prove that SLOPE can recover the pattern of $\beta$ in the noiseless case, it remains to show that the positivity condition holds.  Since $\beta=U_Ms$
	for some $s\in \R^{k+}$, where $k=\|M\|_\infty$, and $Y=X\beta$, we have 
	$$\tilde X_M'Y- \tilde \Lambda_M=\tilde X_M'\tilde X_Ms -  \tilde \Lambda_M.$$
	Therefore, for $\lambda_1$  sufficiently small, we have $\tilde X_M'Y- \tilde \Lambda_M\in \tilde X_M'\tilde X_M\R^{k+}$, which proves the positivity condition. \\\\
$ii)\Rightarrow i):$ This implication follows directly by construction. 
 \end{proof}
 
 \begin{proof}[Proof of Corollary \ref{cor:mainmr0bis}]
 The proof of Corollary \ref{cor:mainmr0bis} follows by an analogous argument.
 \end{proof}
\subsection{Proof of Theorem \ref{th:bound}}

\begin{lemma}
	\label{lemma:affine_space}
	Let $0\neq b\in \R^p$ and $M=\model(b)$. Then the smallest affine space containing $\partial{J_\Lambda}(b)$ is $\aff(\partial{J_\Lambda}(b))=\{v\in \R^p\colon U_M'v=\tilde \Lambda_M\}$. 
\end{lemma}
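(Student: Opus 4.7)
The plan is to establish both inclusions. The inclusion $\aff(\partial J_\Lambda(b)) \subseteq \{v \in \R^p : U_M'v = \tilde\Lambda_M\}$ is immediate from Proposition \ref{prop:affine}: every element of $\partial J_\Lambda(b)$ satisfies $U_M'v = \tilde\Lambda_M$, so the subdifferential itself lies in this affine subspace, hence so does its affine hull. For the reverse inclusion, I would do a dimension count. Since the columns of $U_M$ have pairwise disjoint nonzero supports with $\pm 1$ entries, $\rk(U_M) = k$, so the target affine space has dimension $p-k$. It therefore suffices to exhibit a point $v_0 \in \partial J_\Lambda(b)$ such that $v_0 + \epsilon w \in \partial J_\Lambda(b)$ for every $w \in \ker(U_M')$ and every sufficiently small $\epsilon > 0$; this will force $\aff(\partial J_\Lambda(b)) - v_0 \supseteq \ker(U_M')$ and conclude the argument.

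The candidate $v_0$ I would pick is the cluster-averaged subgradient: set $v_{0,i} = \sign(M_i)\,\bar\lambda_\ell$ for $i$ in the $\ell$-th nonzero cluster $K_\ell$ of $M$, where $\bar\lambda_\ell$ is the arithmetic mean of the $\lambda_j$'s indexed by $K_\ell$, and $v_{0,i} = 0$ on the null cluster. A direct computation yields $U_M' v_0 = \tilde\Lambda_M$. Using Proposition \ref{pro:simple} I would check $v_0 \in \partial J_\Lambda(b)$ and, crucially, that every non-equality constraint is \emph{strict}: the sequence $\bar\lambda_1 > \cdots > \bar\lambda_k > 0$ is strictly decreasing because $\lambda_1 > \cdots > \lambda_p > 0$, so the ordering condition $|v_{0,i}| \ge |v_{0,j}|$ for $|b_i| > |b_j|$ holds strictly between clusters; the partial sum inequalities at indices $j \notin \{n_1, \ldots, n_k\}$ are strict by the elementary fact that for strictly decreasing positive numbers, the sum of the top $s$ strictly exceeds $s$ times the full-cluster average whenever $0 < s < m$.

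For the perturbation step, given $w \in \ker(U_M')$ and $\epsilon > 0$ sufficiently small, the strict sign and between-cluster ordering conditions at $v_0$ persist by continuity, so the top $n_\ell$ entries of $|v_0 + \epsilon w|$ remain those indexed by $K_1 \cup \cdots \cup K_\ell$, and each nonzero entry there retains the sign $\sign(M_i)$. Hence
\[
\sum_{i=1}^{n_\ell} |v_0 + \epsilon w|_{(i)} = \sum_{i \in K_1 \cup \cdots \cup K_\ell} \sign(M_i)\bigl(v_{0,i} + \epsilon w_i\bigr) = \sum_{\ell' \le \ell} \tilde\Lambda_{\ell'} = \sum_{i=1}^{n_\ell} \lambda_i,
\]
where the middle equality uses $U_M' w = 0$ cluster by cluster. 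Thus the tight equalities at cluster boundaries are preserved, while the strict inequalities at the remaining indices survive by continuity, giving $v_0 + \epsilon w \in \partial J_\Lambda(b)$ via Proposition \ref{pro:simple}. This places $w$ in the linear part of $\aff(\partial J_\Lambda(b)) - v_0$, completing the reverse inclusion. I expect the main obstacle to be precisely this bookkeeping: carefully distinguishing cluster-boundary indices, where the equalities must be actively maintained through the condition $w \in \ker(U_M')$, from interior indices, where strictness at $v_0$ alone suffices via continuity.
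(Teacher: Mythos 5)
Your proof is correct, and it takes a genuinely different route from the paper's. Both arguments obtain the inclusion $\aff(\partial{J_\Lambda}(b))\subseteq\{v\in\R^p\colon U_M'v=\tilde\Lambda_M\}$ directly from Proposition \ref{prop:affine}; for the reverse inclusion the paper does a pure dimension count, citing Theorem 4 of \cite{schneider2020geometry} for the dimension of $\aff(\partial{J_\Lambda}(b))$ and matching it with that of the affine space $\{v\colon U_M'v=\tilde\Lambda_M\}$. You instead construct an explicit point $v_0$ (the cluster-averaged subgradient) at which every non-forced constraint of Proposition \ref{pro:simple} is strict, and show that perturbations $v_0+\epsilon w$ with $w\in\ker(U_M')$ stay in the subdifferential: the tight partial-sum equalities at the cluster boundaries are maintained exactly because $U_M'w=0$, while the strict constraints survive by continuity. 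This is sound, and what it buys is a self-contained proof with no external citation, plus as a by-product an explicit element of $\relint(\partial{J_\Lambda}(M))$ and a verification that strictness of the non-forced constraints characterizes the relative interior --- which is precisely the structure exploited in Remark \ref{rem:i}(i); it also sidesteps the dimension formula as printed in the paper's proof, where $\dim(\aff(\partial{J_\Lambda}(b)))=\|M\|_\infty$ appears to be a typo for $p-\|M\|_\infty$. The cost is the bookkeeping you anticipate. One small wording slip: $\bar\lambda_\ell$ must be the arithmetic mean of the block $\lambda_{p_{\ell-1}+1},\ldots,\lambda_{p_\ell}$, i.e.\ the tuning parameters assigned to the $\ell$-th cluster in the sorted order, not literally the mean of $\{\lambda_j\colon j\in K_\ell\}$; the identity $U_M'v_0=\tilde\Lambda_M$ that you assert holds only under the former reading, which is clearly what you intend.
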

\begin{proof}
	According to Proposition \ref{prop:affine} we have 
	\[\aff(\partial{J_\Lambda}(b))\subset \{v\in \R^p\colon U_M'v=\tilde \Lambda_M\}.\]
	Moreover, according to Theorem 4 in \citep{schneider2022geometry} we have 
	$$\dim(\aff(\partial{J_\Lambda}(b)))=\|M\|_\infty=
	\dim(\{v\in \R^p\colon U_M'v=\tilde \Lambda_M\}), $$
	which achieves the proof. 
\end{proof}

\begin{proof}[Proof of Theorem \ref{th:bound}]
		{\it (i) Sharpness of the upper bound.} According to Theorem \ref{th:PT+},  pattern recovery by SLOPE
		is equivalent to have simultaneously the positivity condition and the subdifferential condition satisfied. The upper bound \eqref{eq:upper_bound} coincides with the probability of the subdifferential condition. Thus to prove that this upper bound is sharp, it remains to show that the probability of the  positivity condition tends to $1$ when $r$ tends to $\infty$. Clearly the upper bound is reached when $\tilde\Lambda_M\notin\col(\tilde{X}_M')$ thus we assume hereafter that $\tilde\Lambda_M\in\col(\tilde{X}_M')$.
		Recall that $\beta^{(r)}=U_{M}s^{(r)}$ for $s^{(r)}\in\R^{k+}$ and thus $\tilde X_M'Y^{(r)}= \tilde X_M' \tilde{X}_M s^{(r)}+\tilde X'_M\beps$. 
		As $\tilde X'_M(\tilde X'_M)^+=\tilde X'_M\tilde{X}_M(\tilde{X}'_M \tilde{X}_M)^+$ is the projection on $\col(\tilde X_M')$,
		we obtain
		\begin{align*}
			\tilde{X}'_M Y^{(r)}-\alpha_r \tilde\Lambda_M &= \tilde{X}'_M \tilde{X}_M s^{(r)} - \alpha_r \tilde\Lambda_M + \tilde{X}'_M \beps\\
			&= \tilde{X}'_M \tilde{X}_M s^{(r)} -\alpha_r \tilde{X}'_M \tilde{X}_M (\tilde{X}'_M \tilde{X}_M)^{+}  \tilde\Lambda_M + \tilde{X}'_M \tilde{X}_M (\tilde{X}'_M \tilde{X}_M)^{+} \tilde{X}'_M \beps\\
			&= \tilde{X}'_M \tilde{X}_M \Delta_r \left(\frac{1}{\Delta_r}s^{(r)} - \frac{\alpha_r}{\Delta_r}(\tilde{X}'_M \tilde{X}_M)^{+}\tilde\Lambda_M + \frac{1}{\Delta_r}(\tilde{X}'_M \tilde{X}_M)^{+}\tilde{X}'_M \beps\right).
		\end{align*}
		Note that by the assumption on $\Delta_r$:
		\begin{itemize}
			\item the vector $s^{(r)}/\Delta_r\in \R^{k+}$ is (component-wise) larger than or equal to $(k,\ldots,1)$;
			\item $\lim_{r\to \infty}\alpha_r/\Delta_r=0$ and 
			$\lim_{r\to \infty}1/\Delta_r=0$.
		\end{itemize}
		Consequently, for $r$ large enough
		we have 
		$$\tilde{X}'_M Y^{(r)}-\alpha_r \tilde\Lambda_M\in \tilde X'_M\tilde X_M\R^{k+}.$$
		Since this fact is true for any realization of $\beps$, one may deduce that 
		$$\lim_{r \to \infty}\mathbb P\left(\tilde{X}'_M Y^{(r)}-\alpha_r \tilde\Lambda_M \in  \tilde{X}'_M \tilde{X}_M \R^{k+}\right)=1.$$	
		{\it (ii)	Pattern consistency.} In the proof of  the previous part, we see that
		positivity condition occurs when $r$ is sufficiently large.
		Thus it remains to prove that 
		subdifferential condition occurs as $r\to\infty$
		when  $X'(\tilde X_M')^+\tilde \Lambda_M\in \mathrm{ri}(\partial{J_\Lambda}(M))$. 
		First we observe that
		\begin{equation}
			\label{eq:asympt}
			X'(\tilde X_M')^+\tilde \Lambda_M +\frac{1}{ \alpha_r}X'(I_n-\tilde{P}_M)\varepsilon \overset{r \to \infty}{\longrightarrow}X'(\tilde X_M')^+\tilde \Lambda_M. 
		\end{equation}
		Note by Lemma \ref{lemma:affine_space} that $X'(\tilde X_M')^+\tilde \Lambda_M +\alpha_r^{-1}X'(I_n-\tilde{P}_M) 
		\varepsilon \in \mathrm{aff}(\partial{J_\Lambda}(M))$. Indeed, since $\tilde \Lambda_M\in \col(\tilde X_M')$ we have  
		$$\underbrace{U_M'X'(\tilde X_M')^+\tilde \Lambda_M}_{=\tilde \Lambda_M} +\frac{1}{\alpha_r}\underbrace{U_M'X'(I_n-\tilde{P}_M) 
			\varepsilon(\omega)}_{=0}=\tilde \Lambda_M.$$
		The second term above is zero due to the fact that $(I_n-\tilde{P}_M)$ is an orthogonal projection onto $\col(\tilde{X}_M)^\bot$.
		When $X'(\tilde X_M')^+\tilde \Lambda_M\in \mathrm{ri}(\partial{J_\Lambda}(M))$, due to \eqref{eq:asympt}, one may deduce that for sufficiently large $r$ we have
		\[
		X'(\tilde X_M')^+\tilde \Lambda_M +\frac{1}{\alpha_r}X'(I_n-\tilde{P}_M) \varepsilon\in \partial{J_\Lambda}(M).
		\]
		Consequently, when $r$ is sufficiently large, both the positivity and the subdifferential conditions occur  which, by Theorem \ref{th:PT+}, 
		concludes the proof. 
	\end{proof}
	
	\subsection{Proofs from Section \ref{sec:bign}}
	In this part we give proofs of Theorem \ref{th:bound_n} and Theorem \ref{th:asymp sufficient}.
	They are preceded by a series of simple lemmas. 
	For reader's convenience we recall the setting of Section \ref{sec:bign}.
	\begin{enumerate}
		\item[A.] $\beps_n=(\epsilon_1,\ldots,\epsilon_n)'$, where $(\epsilon_i)_i$ are i.i.d. centered with finite variance $\sigma^2$.
		\item[B1.] $n^{-1}\X_n' \X_n \stackrel{\mathbb{P}}{\longrightarrow} \C>0$.
		\item[B2.] $\frac{\max_{i=1,\ldots,n} |X_{ij}^{(n)}|}{\sqrt{\sum_{i=1}^n (X_{ij}^{(n)})^2}} \stackrel{\mathbb{P}}{\longrightarrow}0$, where $\X_n=(X_{ij}^{(n)})_{ij}$,
		for each $j=1,\ldots,p.$
		\item[B'.] Rows of $\X_n$ are i.i.d. distributed as $\xi$, where $\xi$ is a random vector whose components are linearly independent a.s. and such that $\E[\xi_i^2]<\infty$ for $i=1,\ldots,p$.
		\item[C.] $(\X_n)_n$ and $(\epsilon_n)_n$ are independent. 
	\end{enumerate}
	We consider a sequence of tuning parameters $(\Lambda_n)_n$ defined by $\bLambda_n=\alpha_n\bLambda$, where $\bLambda\in\R^{p+}$ is fixed and $(\alpha_n)_n$ is a sequence of positive numbers.
	
	To ease the notation, we write the clustered matrices and clustered parameters without the subscript indicating the model $M$, \textit{i.e.}, $\tilde{\bLambda} = \U'_{|M|\downarrow}\bLambda$, $\tilde{\bLambda}_n = \alpha_n \tilde{\bLambda}$ and $\tilde{\X}_n = \X_n \U_M$.
	
	\begin{lemma}\label{lem:4}
		\begin{enumerate}
			\item[(i)] Under A, B1, B2 and C,  
			\begin{align}
				\frac1{\sqrt{n}} \X_n' \beps_n & \stackrel{d}{\longrightarrow}
				Z\sim\mathrm{N}(0,\sigma^2 C). \label{eq:d} 
			\end{align}
			\item[(ii)] Under A, B1 and C, 
			\begin{align}
				\frac1n \X_n' \beps_n &\stackrel{\mathbb{P}}{\longrightarrow} 0.\label{eq:p}
			\end{align}
			\item[(iii)] Under A, B' and C,
			\begin{gather}
				0<\limsup_{n\to\infty} 	\frac{ \|X_n'\beps_n\|_\infty }{\sqrt{n\log\log n}}<\infty\qquad\mbox{a.s.} \label{eq:lsup} 
			\end{gather}
		\end{enumerate}
	\end{lemma}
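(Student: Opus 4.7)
Write $X_n'\beps_n=\sum_{i=1}^n X_i^{(n)}\beps_i$, where $X_i^{(n)}\in\R^p$ denotes the $i$-th row of $X_n$. By assumption C, conditionally on $X_n$ the summands are independent with mean zero, and each of (i)--(iii) reduces to a classical limit theorem for sums of (row-wise) independent random variables. A Cramér--Wold reduction further cuts each statement down to a one-dimensional sum indexed by a direction $t\in\R^p$ (or by a coordinate $j$).

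\textbf{Part (i).} Fix $t\in\R^p$ and, with $\psi$ the characteristic function of $\beps_1$, condition on $X_n$:
$$\mathbb{E}\!\left[\exp\!\bigl(i s n^{-1/2}\, t'X_n'\beps_n\bigr)\,\big|\,X_n\right]=\prod_{i=1}^n \psi\!\bigl(s n^{-1/2}\,t'X_i^{(n)}\bigr).$$
The Taylor expansion $\log\psi(u)=-\sigma^2 u^2/2 + o(u^2)$ for small $u$ makes the logarithm of this product equal to $-\tfrac{s^2\sigma^2}{2n}\sum_i (t'X_i^{(n)})^2$ plus a remainder controlled by $\max_i n^{-1/2}|t'X_i^{(n)}|$. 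The main term converges in probability to $-s^2\sigma^2 t'Ct/2$ by B1. For the remainder, $|t'X_i^{(n)}|\le \|t\|_1 \max_j|X^{(n)}_{ij}|$; since $n^{-1}\sum_i(X^{(n)}_{ij})^2\to C_{jj}$ by B1 and B2 gives $\max_i|X^{(n)}_{ij}|/\sqrt{\sum_i (X^{(n)}_{ij})^2}\to 0$ in probability, the product yields $\max_i|X^{(n)}_{ij}|/\sqrt n\to 0$ in probability for each $j$. Upgrading to almost sure convergence along an arbitrary subsequence and applying dominated convergence to the bounded integrand gives $\mathbb{E}[\exp(is n^{-1/2}t'X_n'\beps_n)]\to e^{-s^2\sigma^2 t'Ct/2}$; Cramér--Wold closes (i).

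\textbf{Part (ii).} Without B2, use Chebyshev's inequality conditionally on $X_n$: for each coordinate $j$ and $\delta>0$,
$$\mathbb{P}\!\left(\bigl|n^{-1}(X_n'\beps_n)_j\bigr|>\delta\,\big|\,X_n\right)\le \frac{\sigma^2}{n^2\delta^2}\sum_i (X^{(n)}_{ij})^2 = \frac{\sigma^2}{n\delta^2}\cdot\frac{1}{n}\sum_i(X^{(n)}_{ij})^2.$$
Intersect with the event $A_n=\{n^{-1}\sum_i(X^{(n)}_{ij})^2\le 2C_{jj}\}$, whose probability tends to one by B1, and integrate: $\mathbb{P}(\cdots)\le \mathbb{P}(A_n^c)+2\sigma^2 C_{jj}/(n\delta^2)\to 0$. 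Coordinate-wise convergence in probability to $0$ is (ii).

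\textbf{Part (iii) and main obstacle.} Under B' we may realise the matrices $(X_n)_n$ as the first $n$ rows of an i.i.d. sequence $(\xi_i)_{i\ge 1}$ distributed as $\xi$, independent of $(\beps_i)_{i\ge 1}$. Then for each $j$ the variables $\xi_{ij}\beps_i$ are i.i.d. with mean zero and finite variance $C_{jj}\sigma^2$, so the Hartman--Wintner law of the iterated logarithm yields
$$\limsup_{n\to\infty}\frac{|(X_n'\beps_n)_j|}{\sqrt{2n\log\log n}}=\sigma\sqrt{C_{jj}}\qquad\text{a.s.}$$
Intersecting the $p$ full-measure events and using $\|X_n'\beps_n\|_\infty=\max_j|(X_n'\beps_n)_j|$, we bound the limsup above by $\sigma\max_j\sqrt{C_{jj}}<\infty$ and below, by taking any $j_0$, by $\sigma\sqrt{C_{j_0 j_0}}>0$: positivity of the diagonal of $C$ is guaranteed by Remark~\ref{rem:rem0}, since linear independence of the components of $\xi$ forces $\mathbb{E}[\xi_j^2]>0$. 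The only delicate step is (i): B2 is assumed only in probability, and the Lindeberg-type bound on $\max_i n^{-1/2}|t'X_i^{(n)}|$ must be turned into an unconditional statement about characteristic functions. The subsequence/bounded-convergence argument sketched above is the technical heart of the proof.
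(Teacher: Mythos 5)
Your proposal is correct and follows essentially the same route as the paper: condition on the design, treat $X_n'\beps_n$ as a weighted sum of i.i.d.\ errors, prove the conditional CLT for the triangular array of weights (you do the Lindeberg--Feller argument by hand via characteristic functions where the paper cites it), pass from the in-probability hypotheses B1--B2 to almost sure ones along sub-subsequences, and de-condition by bounded convergence; part (ii) is the same conditional variance/Chebyshev computation, and part (iii) is the same appeal to the Hartman--Wintner law of the iterated logarithm for the i.i.d.\ sequence $(\xi_i\epsilon_i)_i$ with positivity of the diagonal of $C$ coming from Remark~\ref{rem:rem0}. No gaps.
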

	\begin{proof}
		Proof of \eqref{eq:d}. It is enough to show that for any Borel subset $A\subset \R^p$ one has
		\begin{align}\label{eq:d1}
			\mathbb{P}\left(\frac{1}{\sqrt{n}} \X_n' \beps_n \in A\mid (X_n)_n\right) \stackrel{\mathbb{P}}{\longrightarrow} \mathbb{P}\left(Z\in A\right).
		\end{align}
		Since both sides above are bounded, the convergence in probability implies convergence in $L^1$ and therefore establishes \eqref{eq:d}. To show \eqref{eq:d1} we will prove that for any subsequence $(n_k)_k$, there exists  a sub-subsequence $(n_{k_l})_l$ for which, as $l\to\infty$,
		\begin{align}\label{eq:d2}
			\mathbb{P}\left(\frac{1}{\sqrt{n_{k_l}}} \X_{n_{k_l}}' \beps_{n_{k_l}} \in A\mid (X_n)_n\right) \stackrel{a.s.}{\longrightarrow} \mathbb{P}\left(Z\in A\right).
		\end{align}
		Let $\mathbb{P}_{\mathbf{X}}$ denote the regular conditional probability $\mathbb{P}(\cdot\mid (X_n)_n)$ on $(\Omega,\mathcal{F})$. 
		By assumptions B1 and B2, from sequences $(n_k)_k$ one can choose a subsequence $(n_{k_l})_l$ for which 
		\[
		\frac{1}{n_{k_l}}\X_{n_{k_l}}' \X_{n_{k_l}} \stackrel{a.s.}{\longrightarrow} \C>0\quad\mbox{and}\quad  \frac{\max_{i=1,\ldots,{n_{k_l}}} |X_{ij}^{({n_{k_l}})}|}{\sqrt{\sum_{i=1}^{n_{k_l}} (X_{ij}^{({n_{k_l}})})^2}} \stackrel{a.s}{\longrightarrow}0,\quad
		j=1,\ldots,p.
		\]
		
		We have 
		\begin{align*}
			\mathrm{Var}_{\mathbf{X}}\left(\frac{1}{\sqrt{n_{k_l}}}X'_{n_{k_l}} \beps_{n_{k_l}}\right)&=\frac{1}{{n_{k_l}}}\E\left[X_{n_{k_l}}' \beps_{n_{k_l}} \beps_{n_{k_l}}'X_{n_{k_l}} \mid(X_{n})_{n} \right]\\ &=\frac{1}{{n_{k_l}}}X_{n_{k_l}}' \E\left[\beps_{n_{k_l}} \beps_{n_{k_l}}' \right]X_{n_{k_l}}  
			=\frac{\sigma^2 }{{n_{k_l}}}X_{n_{k_l}}' X_{n_{k_l}} \stackrel{a.s.}{\longrightarrow} \sigma^2 C>0,
		\end{align*}
		and one can apply multivariate Lindeberg-Feller CLT on the space $(\Omega,\mathcal{F},\mathbb{P}_{\mathbf{X}})$ to prove \eqref{eq:d2}. 
		Alternatively, the same result follows from \cite[Corollary 1.1]{CLT}\footnote{For our application, the assumption of nonnegative weights in \cite[Corollary 1.1]{CLT} is not essential.}, which concerns more general Central Limit Theorem for linearly negative quadrant dependent variables with weights forming a triangular array (in particular assumption B2 coincides with \cite[(1.8)]{CLT}). 
		
		For (ii) we observe that previous derivations imply that $\mathrm{Var}_{\mathbf{X}}(n^{-1}\X_n'\beps_n)\stackrel{\mathbb{P}}{\longrightarrow}0$. 
		We deduce that $\mathbb{P}_\textbf{X}(n^{-1}\|\X_n'\beps_n\|>\delta)\stackrel{\mathbb{P}}{\longrightarrow}0$ and hence (ii)
		follows
		upon averaging over $(\X_n)_n$.
		
		Eq. \eqref{eq:lsup} is the law of iterated logarithm for an i.i.d. sequence $(\xi_i \epsilon_i)_i$.
	\end{proof}
	
	\begin{lemma} \label{lemma:s_n} 
		Let $M=\model(\beta)$. Assume $\alpha_n/n\to 0$.
		\begin{enumerate}
			\item[(i)] Under A, B1 and C, the positivity condition is satisfied for large $n$ with high probability. 
			\item[(ii)] Under A, B' and C, the positivity condition is almost surely satisfied for large $n$.
		\end{enumerate}
	\end{lemma}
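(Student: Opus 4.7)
The plan is to solve the positivity condition explicitly. Writing $\beta = U_M s_0$ for some $s_0 \in \R^{k+}$, we have $Y_n = \tilde \X_n s_0 + \beps_n$, where $\tilde \X_n = \X_n U_M$. The positivity condition, now with tuning parameter $\alpha_n \Lambda$, reads: there exists $s \in \R^{k+}$ such that
\[
\tilde \X_n' \tilde \X_n s \;=\; \tilde \X_n' Y_n - \alpha_n \tilde \Lambda_M \;=\; \tilde \X_n' \tilde \X_n s_0 + \tilde \X_n' \beps_n - \alpha_n \tilde \Lambda_M.
\]

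First I would show that $\tilde \X_n' \tilde \X_n = U_M' \X_n' \X_n U_M$ is invertible eventually. Since $U_M$ has full column rank $k = \|M\|_\infty$ by construction and $C$ is positive definite, $U_M' C U_M$ is positive definite. Under B1 (resp.\ B'), $n^{-1} \tilde \X_n' \tilde \X_n \to U_M' C U_M$ in probability (resp.\ almost surely) by Remark~\ref{rem:rem0}, so on an event of probability tending to one (resp.\ of probability one) the matrix $\tilde \X_n' \tilde \X_n$ is invertible and $n(\tilde \X_n' \tilde \X_n)^{-1}$ converges to $(U_M' C U_M)^{-1}$. On this event the unique candidate solution is
\[
s_n \;=\; s_0 + (\tilde \X_n' \tilde \X_n)^{-1}\tilde \X_n' \beps_n - \alpha_n (\tilde \X_n' \tilde \X_n)^{-1}\tilde \Lambda_M.
\]
Because $s_0 \in \R^{k+}$ and $\R^{k+}$ is open, it is enough to prove $s_n \to s_0$ in probability for (i) and almost surely for (ii).

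For (i), the deterministic error term equals $\tfrac{\alpha_n}{n}\cdot n(\tilde \X_n' \tilde \X_n)^{-1}\tilde \Lambda_M$, which tends to $0$ in probability since $\alpha_n/n \to 0$ and $n(\tilde \X_n' \tilde \X_n)^{-1} = O_{\mathbb{P}}(1)$. For the stochastic term, Lemma~\ref{lem:4}(ii) applied to each column of $U_M$ gives $n^{-1}\tilde \X_n' \beps_n \stackrel{\mathbb{P}}{\to} 0$, and multiplying by the bounded-in-probability matrix $n(\tilde \X_n' \tilde \X_n)^{-1}$ yields $(\tilde \X_n' \tilde \X_n)^{-1}\tilde \X_n' \beps_n \stackrel{\mathbb{P}}{\to} 0$. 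Hence $s_n \stackrel{\mathbb{P}}{\to} s_0$, and in particular $s_n \in \R^{k+}$ with probability tending to one.

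For (ii), the strong law under B' gives $n^{-1}\tilde \X_n' \tilde \X_n \to U_M' C U_M$ almost surely, so $(\tilde \X_n' \tilde \X_n)^{-1} = O(1/n)$ a.s. The law of iterated logarithm (Lemma~\ref{lem:4}(iii)) yields $\|\X_n'\beps_n\|_\infty = O(\sqrt{n\log\log n})$ a.s., and since $\tilde \X_n' = U_M' \X_n'$, the same bound holds for $\tilde \X_n'\beps_n$. Combining these rates gives $(\tilde \X_n' \tilde \X_n)^{-1}\tilde \X_n'\beps_n = O(\sqrt{\log\log n / n}) \to 0$ a.s., while the deterministic term is $O(\alpha_n/n)\to 0$ a.s. Therefore $s_n \to s_0$ almost surely, and $s_n \in \R^{k+}$ for all large $n$ almost surely.

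The only real subtlety will be the bookkeeping around the invertibility event: one needs to be careful that the algebraic expression for $s_n$ is defined on an event of the correct type (high probability vs.\ full measure) before analyzing the limit, but once invertibility is secured via B1/B' and positive definiteness of $U_M' C U_M$, the rest is a routine rate comparison driven by $\alpha_n/n \to 0$.
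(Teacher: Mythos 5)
Your proof is correct and follows essentially the same route as the paper's: reduce the positivity condition to $s_n\in\R^{k+}$ via invertibility of $\tilde\X_n'\tilde\X_n$, decompose $s_n$ into $s_0$ plus a stochastic term controlled by Lemma~\ref{lem:4} and a deterministic term of order $\alpha_n/n$, and conclude from openness of $\R^{k+}$. The only omission is the trivial case $M=0$ (where the condition is vacuous), which the paper dispatches in one line; otherwise the argument, including the use of the law of the iterated logarithm for part (ii), matches the paper's.
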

	\begin{proof}
		If $M=0$, then the positivity condition is trivially satisfied. Thus, we consider $M\neq0$. 
		
		(i) Since $\tilde{\X}_n' \tilde{\X}_n$ is invertible for large $n$ with high probability, the positivity condition is equivalent to
		\[
		s_n:= 
		(\tilde{\X}_n' \tilde{\X}_n)^{-1}[\tilde{\X}_n' \Y_n - \tilde{\bLambda}_n] \in \R^{k+}.
		\]
		Let $s_0\in\R^{k+}$ be defined through $\beta = \U_M s_0$, where $k=\|M\|_\infty$.
		We will show that if $\alpha_n/n\to 0$, then 
		$s_n  \stackrel{\mathbb{P}}{\longrightarrow} s_0$.
		Since $\R^{k+}$ is an open set, this will imply that for large $n$ with high probability, the positivity condition is satisfied. 
		
		First we rewrite $s_n$ as 
		\[
		s_n= 
		(\tilde{\X}_n' \tilde{\X}_n)^{-1}\tilde{\X}_n' \Y_n - \alpha_n(\tilde{\X}_n' \tilde{\X}_n)^{-1}\tilde{\bLambda}.
		\]
		Since $\beta=\U_M s_0$, we conclude $\X_n\beta=\X_n \U_M s_0 = \tilde{\X}_n s_0$, so the linear regression model takes the form 
		$\Y_n = \tilde{\X}_n s_0+\beps_n$. Thus,  $(\tilde{\X}_n'\tilde{\X}_n)^{-1}\tilde{\X}_n' \Y_n$ is the OLS estimator of $s_0$. 
		
		By assumption B and Lemma \ref{lem:4}, we deduce that 
		\[
		(\tilde{\X}_n'\tilde{\X}_n)^{-1}\tilde{\X}_n' \Y_n =s_0 +  (n^{-1}\tilde{\X}_n'\tilde{\X}_n)^{-1} U_M \frac{1}{n}X_n' \beps_n \stackrel{\mathbb{P}}{\longrightarrow} s_0+ [(\U_M' \C\U_M)^{-1} U_M] 0=s_0.
		\] 
		To complete the proof, we note that 
		\[
		\alpha_n(\tilde{\X}_n' \tilde{\X}_n)^{-1}\tilde{\bLambda} = \frac{\alpha_n}{n}\left[ n(\tilde{\X}_n'\tilde{\X}_n)^{-1}\tilde{\bLambda}\right]\stackrel{\mathbb{P}}{\longrightarrow} 0\, \left[(\U_M' \C\U_M)^{-1} \tilde{\bLambda}\right] = 0.
		\]
		(ii) If one assumes B' instead of B1, then $n^{-1}\X_n'\X_n \stackrel{a.s.}{\longrightarrow} C$ and by \eqref{eq:lsup}, $n^{-1} \X_n' \beps_n \stackrel{a.s.}{\longrightarrow} 0$. The result follows along the same lines as (i).
	\end{proof}
	
	For $M\neq 0$ we denote 
	\begin{gather*}
		\pi_n^{(1)} = \X_n' (\tilde{\X}'_n)^+  \tilde{\bLambda}_n, \qquad\qquad 
		\pi_n^{(2)} =\X'_n (\I_n-\tilde{\P}_n) \Y_n,\\
		\pi_n = \pi_n^{(1)}+\pi_n^{(2)},
	\end{gather*}
	which simplifies in the $M=0$ case to $\pi_n = \pi_n^{(2)} =  X_n' Y_n$.
	
	Recall that the subdifferential condition is equivalent to  $J^*_{\bLambda_n}(\pi_n)\leq 1$ and $\tilde{\Lambda}_n\in\col(\tilde{X}'_M)$ and the latter is satisfied in our setting. Since $\alpha J_{\Lambda}=J_{\alpha\Lambda}$, the subdifferential condition is satisfied if and only if 
	\[
	1\geq J^*_{\bLambda}\left(\alpha_n^{-1}\pi_n\right)=J^*_{\bLambda}\left( \alpha_n^{-1}\pi_n^{(1)} + \frac{\sqrt{n}}{\alpha_n } n^{-1/2} \pi_n^{(2)}\right).
	\]
	In view of results shown below, $\alpha_n^{-1}\pi_n^{(1)}$ converges almost surely, while $n^{-1/2} \pi_n^{(2)}$ converges in distribution to a Gaussian vector. Thus, the pattern recovery properties of SLOPE estimator strongly depend on the behavior of the sequence $(\alpha_n/\sqrt{n})_n$. 
	
	\begin{lemma}\label{lem:DN0}
		(a)
		\begin{enumerate}
			\item[(i)]  Assume A, B1 and C. If $M\neq0$, then
			\[
			\frac{1}{\alpha_n}\pi_n^{(1)} \stackrel{\mathbb{P}}{\longrightarrow}    \C\U_M(\U_M'\C\U_M)^{-1}\tilde \bLambda.
			\]
			\item[(ii)] Assume A, B1, B2 and C.
			The sequence $\left(n^{-1/2}\pi_n^{(2)}\right)_n$ converges in distribution to a Gaussian vector $Z$ with
			\[
			\Zb\sim\mathrm{N}\left(0, \sigma^2\left[\C-\C\U_M(\U_M'\C\U_M)^{-1}\U_M'\C\right]\right).
			\]
			\item[(iii)] Assume A, B1 and C. If 
			$\lim_{n \to \infty}  {\alpha_n}/{\sqrt{n}} = \infty$,
			then $\alpha_n^{-1}\pi_n^{(2)}\stackrel{\mathbb{P}}{\longrightarrow}0$.
		\end{enumerate}
		(b) Assume A, B' and C. 
		\begin{enumerate}
			\item[(i')] If $M\neq0$, then 
			\[
			\frac{1}{\alpha_n}\pi_n^{(1)} \stackrel{a.s.}{\longrightarrow}    \C\U_M(\U_M'\C\U_M)^{-1}\tilde \bLambda.
			\]
			\item[(ii')] If
			$\lim_{n \to \infty}  {\alpha_n}/{\sqrt{n\log\log n}} = \infty$,
			then $\alpha_n^{-1}\pi_n^{(2)}\stackrel{a.s.}{\longrightarrow}0$.
		\end{enumerate}
	\end{lemma}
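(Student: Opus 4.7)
The plan is to reduce all six claims to two building blocks already at hand: the limiting behavior of $n^{-1}\X_n'\X_n$ (convergence in probability to $\C$ under B1 and almost sure convergence under B' by the strong law) and the behavior of $\X_n'\beps_n$ provided by Lemma~\ref{lem:4}. The key algebraic simplification is to notice that $\tilde{\X}_n'\tilde{\X}_n = \U_M'\X_n'\X_n \U_M$, which is invertible with probability tending to one (almost surely for large $n$ under B'), because $\U_M$ has full column rank and $\C$ is positive definite. On the good event where this invertibility holds, one may use $(\tilde{\X}_n')^+ = \tilde{\X}_n(\tilde{\X}_n'\tilde{\X}_n)^{-1}$ and hence write
\begin{equation*}
\frac{1}{\alpha_n}\pi_n^{(1)} = \X_n'\X_n \U_M(\U_M'\X_n'\X_n \U_M)^{-1}\tilde{\bLambda} = A_n \U_M (\U_M' A_n \U_M)^{-1}\tilde{\bLambda},
\end{equation*}
with $A_n = n^{-1}\X_n'\X_n$. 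Parts (i) and (i') then follow immediately from the continuous mapping theorem applied to $A_n \to \C$, in probability or almost surely according to the assumptions.

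For (ii), I would first observe that $(\I_n - \tilde{\P}_n)\X_n\beta = 0$, since $\beta = \U_M s_0$ implies $\X_n\beta = \tilde{\X}_n s_0 \in \col(\tilde{\X}_n)$ while $\I_n-\tilde{\P}_n$ projects onto the orthogonal complement. Consequently,
\begin{equation*}
\frac{1}{\sqrt{n}}\pi_n^{(2)} = \frac{1}{\sqrt{n}}\X_n'(\I_n-\tilde{\P}_n)\beps_n = \bigl[\I_p - A_n \U_M(\U_M' A_n \U_M)^{-1}\U_M'\bigr]\cdot \frac{1}{\sqrt{n}}\X_n'\beps_n.
\end{equation*}
By Lemma~\ref{lem:4}(i), $n^{-1/2}\X_n'\beps_n \stackrel{d}{\to}\mathrm{N}(0,\sigma^2\C)$, and combined with $A_n \stackrel{\mathbb{P}}{\to} \C$ Slutsky's theorem gives convergence in distribution to $[\I_p - \C\U_M(\U_M'\C\U_M)^{-1}\U_M']W$ where $W\sim\mathrm{N}(0,\sigma^2\C)$. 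A direct expansion of $[\I_p - \C\U_M(\U_M'\C\U_M)^{-1}\U_M']\sigma^2\C[\I_p - \U_M(\U_M'\C\U_M)^{-1}\U_M'\C]$, in which the cross terms collapse because $(\U_M'\C\U_M)^{-1}\U_M'\C\U_M = I_k$, simplifies the limiting covariance to $\sigma^2[\C - \C\U_M(\U_M'\C\U_M)^{-1}\U_M'\C]$, as required.

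Parts (iii) and (ii') are essentially free once (ii) is established. I would write $\alpha_n^{-1}\pi_n^{(2)} = (\sqrt{n}/\alpha_n)\cdot n^{-1/2}\pi_n^{(2)}$ for (iii): the first factor tends to $0$ while $n^{-1/2}\pi_n^{(2)}$ is tight by (ii), so the product vanishes in probability. For (ii'), the Gaussian-scale CLT is too weak and I would instead rely on Lemma~\ref{lem:4}(iii) combined with the decomposition of $\pi_n^{(2)}$ displayed above, which gives $\alpha_n^{-1}\pi_n^{(2)} = [\I_p - A_n \U_M(\U_M' A_n \U_M)^{-1}\U_M']\cdot\alpha_n^{-1}\X_n'\beps_n$. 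Under B', $A_n \to \C$ almost surely, and $\alpha_n^{-1}\|\X_n'\beps_n\|_\infty = (\sqrt{n\log\log n}/\alpha_n)\cdot\|\X_n'\beps_n\|_\infty/\sqrt{n\log\log n}\to 0$ a.s. whenever $\alpha_n/\sqrt{n\log\log n}\to\infty$, since the second factor is bounded a.s. by the law of the iterated logarithm.

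The only genuine nuisance I expect is the bookkeeping around the invertibility of $\tilde{\X}_n'\tilde{\X}_n$: in the B1 setting it only holds on a sequence of events whose probability tends to one, so the formula $(\tilde{\X}_n')^+ = \tilde{\X}_n(\tilde{\X}_n'\tilde{\X}_n)^{-1}$ cannot be used pathwise. One handles this either by restricting attention to those events (which is enough for convergence in probability or in distribution), or by working with the full pseudo-inverse and using continuity of the Moore--Penrose inverse on matrices of locally constant rank. Beyond this, the proof is a sequence of routine applications of continuous mapping, Slutsky, and the law of the iterated logarithm.
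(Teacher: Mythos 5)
Your proposal is correct and follows essentially the same route as the paper's proof: the same rewriting of $\alpha_n^{-1}\pi_n^{(1)}$ as $A_n\U_M(\U_M'A_n\U_M)^{-1}\tilde\bLambda$ with continuous mapping, the same factorization $n^{-1/2}\pi_n^{(2)}=[\I_p-A_n\U_M(\U_M'A_n\U_M)^{-1}\U_M']\,n^{-1/2}\X_n'\beps_n$ combined with Lemma~\ref{lem:4} and Slutsky for (ii), tightness for (iii), and the law of the iterated logarithm bound \eqref{eq:lsup} for (ii'). Your explicit verification of the limiting covariance and your remark on handling the invertibility of $\tilde{\X}_n'\tilde{\X}_n$ only on high-probability events are details the paper leaves implicit, but they do not change the argument.
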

	\begin{proof}
		\begin{enumerate}
			\item[(i)] Assumption B1 implies that 
			\[
			\X_n' \tilde{\X}_n (\tilde{\X}'_n \tilde{\X}_n)^{-1} = \frac1n \X_n' \X_n \U_M(\U_M'n^{-1}\X'_n \X_n\U_M)^{-1} \stackrel{\mathbb{P}}{\longrightarrow}\C\U_M (\U_M' \C\U_M)^{-1}.
			\]
			\item[(ii)] When $\beta=\U_M s_0$, then the linear regression model takes the form  $\Y_n = \tilde{\X}_n s_0+\beps_n$. Since $\tilde{\P}_n$ is the projection matrix onto  $\col(\tilde{\X}_n)$,  we have $(\I_n-\tilde{\P}_n)\tilde{\X}_n=0$. Thus,  
			\begin{align*}
				n^{-1/2}\pi_n^{(2)} &= n^{-1/2} \X_n' (\I_n-\tilde{\P}_n)\Y_n = n^{-1/2} \X_n' (\I_n-\tilde{\P}_n)\beps_n \\
				& = \left[\I_p-\X_n'\X_n \U_M(\U_M'\X_n'\X_n\U_M)^{-1}\U_M'\right] \left[n^{-1/2}\X_n'\beps\right].
			\end{align*}
			By assumption B1 we have,
			\begin{align}\label{eq:ddd}
			n^{-1}\X_n'\X_n \U_M(\U_M'n^{-1}\X_n'\X_n\U_M)^{-1}\U_M'\stackrel{\mathbb{P}}{\longrightarrow}\C\U_M  (\U_M' \C\U_M)^{-1} \U_M'.
			\end{align}
			Thus, by Lemma \ref{lem:4} (i) and Slutsky's theorem, we obtain (ii). 
			(iii) follows similarly as \ref{lem:4} (ii): with the aid of \eqref{eq:ddd} we show that \mbox{$\mathrm{Var}_{\textbf{X}}(\alpha_n^{-1} \pi_n^{(2)})\stackrel{\mathbb{P}}{\longrightarrow}0$}, which implies that conditionally on $(X_n)_n$ we have $\alpha_n^{-1} \pi_n^{(2)}\stackrel{\mathbb{P}_{\mathbf{X}}}{\longrightarrow}0$. 
			
			Assumption B' implies that $n^{-1}\X_n'\X_n\stackrel{a.s.}{\longrightarrow}C$ and thus (i') is proven in the same way as (i). (ii') follows from \eqref{eq:lsup}.
			
		\end{enumerate}
	\end{proof}

	\begin{proof}[Proof of Theorem \ref{th:bound_n}]
		(i) is a direct consequence of Lemmas \ref{lemma:s_n} and  \ref{lem:DN0}.
		Since positivity condition is satisfied for large $n$ with high probability, for (ii) we have with $M=\model(\beta)$,
		\begin{align}\label{eq:ech}
			\lim_{n\to\infty}\mathbb{P}\left(\model(\betaSLOPE_n)=M\right) & = \lim_{n\to\infty}\mathbb{P}\left( \pi_n \in \partial J_{\alpha_n\Lambda}(M)\right) 
			=  \lim_{n\to\infty}\mathbb{P}\left( \alpha_n^{-1}\pi_n \in \partial J_{\Lambda}(M)\right)\\
			&\geq \lim_{n\to\infty}\mathbb{P}\left( \alpha_n^{-1}\pi_n \in \mathrm{ri}(\partial J_{\Lambda}(M))\right)=1,\nonumber
		\end{align}
		where in the last equality we use the Portmanteau Theorem, assumption \eqref{eq:Cri} and the fact that  sequence $(\alpha_n^{-1}\pi_n)_n$ converges in distribution to $\C\U_M(\U_M'\C\U_M)^{-1}$ if and only if $\alpha_n/\sqrt{n}\to \infty$. 
		
		Condition \eqref{eq:matrixIR} implies that $\C\U_M(\U_M'\C\U_M)^{-1}\in\partial J_\Lambda(M)$. Since $(\alpha_n^{-1}\pi_n)_n$ converges in probability to $\C\U_M(\U_M'\C\U_M)^{-1}$, the necessity of this condition is explained by \eqref{eq:ech}.
	\end{proof}
	
	\begin{proof}[Proof of Theorem \ref{th:asymp sufficient}]
		By Lemma \ref{lemma:s_n}, the positivity condition is satisfied for large $n$ almost surely. 
		By Lemma \ref{lem:DN0} (i) and (iii), we have 
		\[
		a_n:=\frac{1}{\alpha_n}\pi_n\stackrel{a.s.}{\longrightarrow} \C\U_M(\U_M'\C\U_M)^{-1}\tilde{\bLambda}=:a_0.
		\]
		It is easy to see that $U_M'a_n=\tilde{\Lambda}$.
		By the condition $a_0\in \mathrm{ri}(J_\Lambda(M))$ it follows that $a_n \in J_\Lambda(M)$ almost surely for sufficiently large $n$. Therefore $\pi_n \in J_{\Lambda_n}(M)$ for large $n$ almost surely and thus the subdifferential condition is also satisfied.
	\end{proof}

	\section{Refined results on strong consistency of the SLOPE pattern}\label{appB}
	In this appendix we aim to give  weaker assumptions on the design matrix than condition B', but which  ensure the almost sure convergence of the  pattern of $\betaSLOPE_n$. 
	\begin{enumerate}
		\item[A'.] $\beps_n=(\epsilon_1,\ldots,\epsilon_n)'$, where $(\epsilon_i)_i$ are independent random variables such that 
		\begin{align}\label{eq:epsA}
			\E[\epsilon_n]=0\quad\mbox{and}\quad\mathrm{Var}(\epsilon_n)=\sigma^2\quad\mbox{for all $n$,}\quad\mbox{ and } \sup_n\E[|\epsilon_n|^r]<\infty
		\end{align}
		for some $r>2$.
		\item[B''.] A sequence of design matrices $\X_1, \X_2,\ldots$ satisfies the condition
		\begin{equation} \label{assumption on design2}
			\frac{1}{n}\X_n' \X_n \stackrel{a.s.}{\longrightarrow} \C,
		\end{equation}
		where $\C$ is a deterministic positive definite symmetric $p\times p$ matrix.
		
		With $\X_n=\left(X_{ij}^{(n)}\right)_{ij}$,
		\begin{align}\label{eq:badass0}
			\lim_{n\to\infty} \frac{(\log n)^\rho}{\sqrt{n}} \sup_{i,j} \left|X_{ij}^{(n)}\right| = 0\quad\mbox{a.s. for all $\rho>0$}
		\end{align}
		and
		there exist nonnegative random variables $(c_i)_i$, constants $d>2/r$ and $m_0\in\mathbb{N}$ such that for $n>m\geq m_0$,
		\begin{gather}\label{eq:badass}
			\sup_j\left[\sum_{i=1}^m \left(X_{ij}^{(n)}-X_{ij}^{(m)}\right)^2 + \sum_{i=m+1}^n \left(X_{ij}^{(n)}\right)^2\right]\leq \left(\sum_{i=m+1}^n c_i\right)^d\quad\mbox{ a.s.}, \\
			\left(\sum_{i=m_0}^n c_i\right)^d = O(n)\quad\mbox{a.s.} \label{eq:badass2}
		\end{gather}
		\item[C.] $(\X_n)_n$ and $(\epsilon_n)_n$ are independent. 
	\end{enumerate}
	
	We note that conditions \eqref{eq:badass0} and \eqref{eq:badass} are trivially satisfied in the i.i.d. rows setting of Remark \ref{rem:rem0} or assumption B'. 
	The main ingredient of the proof of the strong pattern consistency is the law of iterated logarithm \eqref{eq:lsup} which holds trivially under B'. Below, we establish the same result under more general B''. The technical assumption \eqref{eq:badass} is a kind of weak continuity assumption on the rows of $\X_n$ as it says that the $\ell_2$-distance between $j$th rows of $\X_n$ and $\X_m$ should not be too large. 
	
	\begin{lemma}\label{lem:new}
		Assume A', B'' and C. 
		Then
		\begin{align}\label{eq:LLN}
			\limsup_{n\to\infty} 	\frac{ \|X_n'\beps_n\|_\infty }{\sqrt{n\log\log n}}<\infty\qquad\mbox{a.s.}
		\end{align}
	\end{lemma}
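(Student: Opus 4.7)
The plan is to reduce to a coordinatewise statement and then carry out a classical LIL-style argument adapted to the triangular-array structure. Since $p$ is fixed, a union bound reduces the problem to showing, for each $j \in \{1, \ldots, p\}$,
$$\limsup_{n \to \infty} \frac{|T_n^{(j)}|}{\sqrt{n \log \log n}} < \infty \quad \text{a.s.}, \qquad T_n^{(j)} := \sum_{i=1}^{n} X_{ij}^{(n)} \epsilon_i.$$
By C, conditionally on $\mathcal{G} := \sigma\bigl((\X_n)_n\bigr)$ the weights $X_{ij}^{(n)}$ become deterministic and $(\epsilon_i)_i$ remains an independent, centred sequence with $\sup_i \mathbb{E}[|\epsilon_i|^r] < \infty$ (where $r > 2$ as in A'). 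I would fix a representative $\omega$ on which B'' holds and verify the LIL bound uniformly on that full-probability $\mathcal{G}$-event; integrating then yields the unconditional statement.

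First, a geometric subsequence. Set $n_k = \lfloor q^k \rfloor$ for some $q > 1$ to be chosen later. Along $(n_k)$ the conditional variance is $\mathrm{Var}(T_{n_k}^{(j)} \mid \mathcal{G}) = \sigma^2 \sum_{i=1}^{n_k} (X_{ij}^{(n_k)})^2 \sim \sigma^2 C_{jj}\, n_k$ by B''. The smallness condition \eqref{eq:badass0} guarantees that $\max_i |X_{ij}^{(n_k)}| = o\bigl(\sqrt{n_k}/(\log n_k)^\rho\bigr)$ for every $\rho > 0$, so a Fuk–Nagaev / Bernstein-type inequality, combined with the $L^r$ moment from \eqref{eq:epsA}, delivers tails of the form
$$\mathbb{P}\bigl(|T_{n_k}^{(j)}| > C \sqrt{n_k \log\log n_k} \,\big|\, \mathcal{G}\bigr) \leq (\log n_k)^{-\theta}$$
for $C$ large and $\theta$ arbitrarily large. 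Summability along $n_k$ and the conditional Borel–Cantelli lemma then yield $\limsup_k |T_{n_k}^{(j)}|/\sqrt{n_k \log\log n_k} \leq C$ almost surely.

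Next, the gap filling. For $n_k < n \leq n_{k+1}$ decompose
$$T_n^{(j)} - T_{n_k}^{(j)} = \sum_{i=1}^{n_k} \bigl(X_{ij}^{(n)} - X_{ij}^{(n_k)}\bigr) \epsilon_i + \sum_{i=n_k+1}^{n} X_{ij}^{(n)} \epsilon_i,$$
whose total squared $\ell^2$-coefficient is bounded, by \eqref{eq:badass}, by $\bigl(\sum_{i=n_k+1}^{n_{k+1}} c_i\bigr)^d$, which by \eqref{eq:badass2} is $O(n_{k+1}) = O(n_k)$ almost surely. Markov's inequality with an $r$-th moment applied for each fixed $n$ in the block, followed by a union bound over the $O(q^k)$ indices, produces a summable-in-$k$ tail bound, provided $q$ is chosen close enough to $1$; the exponent condition $d > 2/r$ is exactly what keeps the union-bound term small enough against the $r$-th-moment estimate. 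A second application of Borel–Cantelli controls the oscillation inside each block and, combined with the subsequence bound and a final union bound over $j$, establishes \eqref{eq:LLN}.

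The principal obstacle lies in this gap-filling step: because $X_{ij}^{(n)}$ itself depends on $n$, the process $(T_n^{(j)})_n$ is not a martingale in $n$ and neither Kolmogorov's nor Lévy's maximal inequality applies directly. Assumptions \eqref{eq:badass}–\eqref{eq:badass2} have been crafted precisely to supply the $\ell^2$-continuity that compensates for this loss, and calibrating the exponent $d$, the moment $r$, and the block-growth rate $q$ against one another is the crux of the estimate.
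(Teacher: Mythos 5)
Your overall strategy is a from-scratch reconstruction of the law of the iterated logarithm for double arrays, whereas the paper does something much shorter: it checks that \eqref{eq:epsA}, \eqref{eq:badass0}, \eqref{eq:badass} and \eqref{eq:badass2} are verbatim the hypotheses (1.2), (1.6), (1.7), (1.8) of \cite[Theorem 1]{LW82}, applies that theorem on the conditional probability space given $(\X_n)_n$, and averages. Reproving the result is a legitimate alternative route, and your subsequence step is essentially sound: along $n_k=\lfloor q^k\rfloor$ the conditional variance is $\sim\sigma^2C_{jj}n_k$, and a Fuk--Nagaev bound together with \eqref{eq:badass0} does give conditional tails that are summable in $k$ for $C$ large.

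The gap is in the block-oscillation step, which you yourself identify as the crux but then do not actually close. A flat union bound over the $O(q^k)$ indices $n\in(n_k,n_{k+1}]$ cannot be summable in $k$: the best per-index tail you can extract at threshold $C\sqrt{n_k\log\log n_k}$ is of the form $\exp(-c\,C^2\log\log n_k)\approx k^{-cC^2}$ (from the Gaussian part, since \eqref{eq:badass} and \eqref{eq:badass2} only bound the increment's squared $\ell^2$-coefficient by $O(n_{k+1})$, i.e.\ the \emph{same} order as $\mathrm{Var}(T_{n_k}^{(j)})$, and choosing $q$ close to $1$ does not force it smaller because \eqref{eq:badass2} controls only the cumulative sum of the $c_i$, not its increments), plus an $r$-th-moment term that is only $(\log n_k)^{-\rho(r-2)}$. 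Polynomially small in $k$ times exponentially many indices diverges, so Borel--Cantelli does not apply; and, as you note, no Lévy/Ottaviani/Doob maximal inequality is available because $(T_n^{(j)})_n$ is not a partial-sum process in $n$. Closing this step is exactly where Lai and Wei's proof does real work: they truncate $\epsilon_i$ at a level tied to $i$ (this is where $\sup_n\E|\epsilon_n|^r<\infty$ and the exponent condition $d>2/r$ enter), control the tail parts by Borel--Cantelli, and bound the oscillation of the truncated process over a block deterministically via Cauchy--Schwarz against the $\ell^2$-continuity \eqref{eq:badass}, rather than by a probabilistic union bound. As written, your argument does not establish \eqref{eq:LLN}; either supply that truncation-plus-chaining mechanism explicitly or, as the paper does, invoke \cite[Theorem 1]{LW82} directly.
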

	\begin{proof}
		In view of \eqref{assumption on design} we have for $j=1,\ldots,p$,
		\begin{align}\label{eq:Anj}
			n^{-1} A_n^{(j)} := n^{-1} \sum_{i=1}^n \left(X_{ij}^{(n)}\right)^2=\left(n^{-1}\X_n'\X_n\right)_{jj} \stackrel{a.s.}{\longrightarrow} C_{jj}>0.
		\end{align}
		We apply the general law of iterated logarithm for weights forming a triangular array from \citep{LW82}. 
		The result follows directly from \cite[Theorem 1]{LW82}. Defining $a_{ni}^{(j)}:=X_{ij}^{(n)}$ for $i=1,\ldots,n$, $j=1,\ldots,p$, $n\geq1$ and $0$ otherwise, we have 
		\[
		(\X_n'\beps_n)_j = \sum_{i=-\infty}^\infty a_{ni}^{(j)}\epsilon_i
		\]
		and therefore we fall within the framework of \cite[Eq. (1.3)]{LW82}. Then, \eqref{eq:epsA}, \eqref{eq:badass0}, \eqref{eq:badass} and \eqref{eq:badass2} coincide with \cite[(1.2), (1.6), (1.7), (1.8)]{LW82} respectively. Let $\mathbb{P}(\cdot|(\X_n)_n)$ be a regular conditional probability. Then, applying \cite[Theorem 1 (i)]{LW82} on the probability space $(\Omega,\mathcal{F},\mathbb{P}_{\mathbf{X}})$ to our sequence we obtain that for $j=1,\ldots,n$,
		\[
		\mathbb{P}\left( \limsup_{n\to\infty}  \frac{ \left|(\X_n'\beps_n)_j \right|}{\sqrt{2 A_n^{(j)}\log \log A_n^{(j)}}}\leq \sigma\Bigg| (\X_n)_n \right) =1\qquad \mbox{a.s.}
		\]
		Averaging over $(\X_n)_{n}$ and using \eqref{eq:Anj} again, we obtain the assertion.
	\end{proof}

	\begin{theorem}\label{th:asymp sufficient2}
		Assume A', B'' and C.  Suppose that $(\alpha_n)_n$ satisfies
		\begin{align*}
			\lim_{n \to \infty}  \frac{\alpha_n}{n}=0\qquad\mbox{and}\qquad		\lim_{n \to \infty}  \frac{\alpha_n}{\sqrt{n \log\log n} } = \infty.
		\end{align*}
		If  \eqref{eq:Cri} is satisfied, then
		$\model(\betaSLOPE_n)\stackrel{a.s.}{\longrightarrow}\model(\beta)$.
	\end{theorem}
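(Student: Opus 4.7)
The plan is to follow the blueprint of Theorem~\ref{th:asymp sufficient} line by line, with the law of iterated logarithm \eqref{eq:lsup} (which relied on the i.i.d.\ row structure in B') replaced everywhere by Lemma~\ref{lem:new}. So I would invoke Theorem~\ref{th:PT+} and show that, almost surely for large $n$, both the positivity and the subdifferential conditions hold simultaneously. Two ingredients power the argument: (a) $n^{-1}\X_n'\X_n\to\C$ a.s., which is just \eqref{assumption on design2}; and (b) $\|\X_n'\beps_n\|_\infty=O(\sqrt{n\log\log n})$ a.s., which is exactly \eqref{eq:LLN} from Lemma~\ref{lem:new}. In particular, (b) combined with $\alpha_n/n\to 0$ gives $n^{-1}\X_n'\beps_n\to 0$ a.s.

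For the positivity condition, I would reuse the computation of Lemma~\ref{lemma:s_n}(ii). Writing $\beta=\U_M s_0$ with $s_0\in\R^{k+}$ and $s_n=(\tilde{\X}_n'\tilde{\X}_n)^{-1}[\tilde{\X}_n'\Y_n-\tilde{\bLambda}_n]$, the decomposition
\[
s_n=s_0+(n^{-1}\tilde{\X}_n'\tilde{\X}_n)^{-1}\U_M'\bigl(n^{-1}\X_n'\beps_n\bigr)-\frac{\alpha_n}{n}(n^{-1}\tilde{\X}_n'\tilde{\X}_n)^{-1}\tilde{\bLambda}
\]
together with (a), (b) and $\alpha_n/n\to 0$ forces $s_n\to s_0$ a.s. Since $\R^{k+}$ is open, $s_n\in\R^{k+}$ a.s.\ for all large $n$, which is the positivity condition.

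For the subdifferential condition I would prove that $a_n:=\alpha_n^{-1}\pi_n$ converges a.s.\ to $a_0:=\C\U_M(\U_M'\C\U_M)^{-1}\tilde{\bLambda}$, exactly as in the proofs of Lemma~\ref{lem:DN0}(i')--(ii'). The first piece $\alpha_n^{-1}\pi_n^{(1)}=\X_n'\tilde{\X}_n(\tilde{\X}_n'\tilde{\X}_n)^{-1}\tilde{\bLambda}$ converges a.s.\ to $a_0$ by (a) alone. For the second piece I would write
\[
\alpha_n^{-1}\pi_n^{(2)}=\Bigl[\I_p-n^{-1}\X_n'\X_n\U_M\bigl(\U_M'n^{-1}\X_n'\X_n\U_M\bigr)^{-1}\U_M'\Bigr]\cdot\frac{\sqrt{n\log\log n}}{\alpha_n}\cdot\frac{\X_n'\beps_n}{\sqrt{n\log\log n}},
\]
where the bracket is $O(1)$ by (a), the middle factor vanishes by assumption, and the last factor is bounded a.s.\ by (b); hence $\alpha_n^{-1}\pi_n^{(2)}\to 0$ a.s. Using the open irrepresentability condition \eqref{eq:Cri}, namely $a_0\in\mathrm{ri}(\partial J_\bLambda(M))$, the a.s.\ convergence $a_n\to a_0$ places $a_n$ in $\partial J_\bLambda(M)$ a.s.\ for large $n$, equivalently $\pi_n\in\partial J_{\alpha_n\bLambda}(M)$. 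Theorem~\ref{th:PT+} then yields $\model(\betaSLOPE_n)=\model(\beta)$ a.s.\ for large $n$, giving \eqref{property: ASAMRP}.

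The only genuinely new content relative to Theorem~\ref{th:asymp sufficient} is the law of iterated logarithm under B'', which is isolated in Lemma~\ref{lem:new} and proved by applying the triangular-array LIL of \cite{LW82} to the conditional probability given $(\X_n)_n$. Once (b) is in hand, every other step is a verbatim translation of the earlier argument, so the main obstacle is not in the present theorem but was already handled in Lemma~\ref{lem:new}; the task here is just to check that the technical assumptions \eqref{eq:badass0}--\eqref{eq:badass2} of B'' enter nowhere else and that the $o(1)$ rates $\alpha_n/n\to 0$ and $\alpha_n/\sqrt{n\log\log n}\to\infty$ are used in exactly the two places indicated above.
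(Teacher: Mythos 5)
Your proposal is correct and follows exactly the route the paper intends: the paper does not even write out a separate proof of Theorem~\ref{th:asymp sufficient2}, since once Lemma~\ref{lem:new} supplies the law of iterated logarithm \eqref{eq:LLN} under A$'$, B$''$, C, the argument of Theorem~\ref{th:asymp sufficient} (positivity via the Lemma~\ref{lemma:s_n}(ii) decomposition, subdifferential via the Lemma~\ref{lem:DN0} limits and the open irrepresentability condition \eqref{eq:Cri}) carries over verbatim. Your identification of the only two places where the rates $\alpha_n/n\to 0$ and $\alpha_n/\sqrt{n\log\log n}\to\infty$ enter, and of Lemma~\ref{lem:new} as the sole point where \eqref{eq:badass0}--\eqref{eq:badass2} are used, matches the paper's presentation.
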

	
	Comments:
	\begin{enumerate}
		\item[a)] Under reasonable assumptions (see \textit{e.g.} \cite[Theorem 1 (iii)]{LW82}) one can show that  
		\begin{align*}
			\limsup_{n\to\infty} 	\frac{ \|X_n'\beps_n\|_\infty }{\sqrt{n\log\log n}}>0\qquad\mbox{a.s.}
		\end{align*}
		Since $\alpha_n^{-1}\X_n'\beps_n\stackrel{a.s.}{\longrightarrow}0$ is necessary for the a.s. pattern recovery, we can show that the condition $\alpha_n/\sqrt{n\log\log n}\to\infty$ cannot be weakened. Thus, the gap between the convergence in probability and the a.s. convergence is integral to the problem and in general cannot be reduced.
		\item[b)] One can relax assumption B'' by imposing stronger conditions on the error $\beps_n$. \textit{E.g.} if $\beps_n$ is Gaussian, then one can use results from \citep{S84}. 
		We note that \citep{S84} offers a very similar result as \citep{LW82}, but their assumptions are not quite comparable, see \cite[Section 3 i)]{S84} for detailed discussion.
		\item[c)] For Gaussian errors, one can consider a more general setting where one does not assume any relation between $\beps_n$ and $\beps_{n+1}$, \textit{i.e.}, the error need not be incremental. For orthogonal design such approach was taken in \citep{Skalski_2022}. It is proved there that one obtains the a.s. SLOPE pattern consistency with the second limit condition of Theorem \ref{th:asymp sufficient2} replaced by $\lim_{n\to\infty} \alpha_n/\sqrt{n\log n}=\infty$. This result can be generalized to non-orthogonal designs.
	\end{enumerate}
	
	\section{Strong consistency of SLOPE estimator}

	\begin{lemma}\label{lem0}
		Assume that $\beps_n=(\epsilon_1,\ldots,\epsilon_n)'$ with $(\epsilon)_i$ i.i.d., centered and having finite variance. Suppose 
		\begin{align}\label{assumption on design3}
			\frac{1}{n}\X_n'\X_n \stackrel{a.s.}{\longrightarrow}C>0.
		\end{align}
		and that $(\beps_n)_n$ and $(\X_n)_n$ are independent.
		Then $n^{-1}\X_n'\beps_n\stackrel{a.s.}{\longrightarrow}0$.
	\end{lemma}
	\begin{proof}
		Let $\mathbb{P}(\cdot\mid (\X_n)_n)$ denote the regular conditional probability. 
		By \cite[Th. 1.1]{SLLN} applied to a sequence $(n^{-1}\X_n'\beps_n)_j$ on the probability space $(\Omega,\mathcal{F},\mathbb{P}(\cdot\mid (\X_n)_n))$,
		we obtain
		\[
		\mathbb{P}\left(\lim_{n\to\infty} n^{-1}(\X_n'\beps_n)_j =0\mid (\X_n)_n\right)=1,\qquad j=1,\ldots,p,\qquad\mbox{a.s.}
		\]
		Thus, applying the expectation to both sides above we obtain the assertion.
	\end{proof}
	
	\begin{theorem}\label{thm:consist}
		Assume that $\Y_n=\X_n\beta+\beps_n$, where $\beta\in\R^p$, $\beps_n=(\epsilon_1,\ldots,\epsilon_n)'$ with $(\epsilon)_i$ i.i.d., centered and finite variance. Suppose \eqref{assumption on design3} and that  $(\beps_n)_n$ and $(\X_n)_n$ are independent.
		Let $\bLambda_n=(\lambda_1^{(n)},\ldots,\lambda_p^{(n)})'$. 
		Then, for large $n$, $\Snew{\X_n}{\bLambda_n}{\Y_n}=\{\betaSLOPE_n\}$ almost surely.
		
		If $\beta\neq0$, then $\betaSLOPE_n\stackrel{a.s.}{\longrightarrow}\beta$ if and only if 
		\begin{align}\label{eq:alpha/nA}
			\lim_{n\to\infty}\frac{\lambda_1^{(n)}}{n}= 0.
		\end{align}
		If $\beta=0$ and \eqref{eq:alpha/nA} holds true, then $\betaSLOPE_n\stackrel{a.s.}{\longrightarrow}0$.
	\end{theorem}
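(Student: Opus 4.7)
The plan is to derive an explicit KKT-based formula for $\betaSLOPE_n$ and then invoke Lemma~\ref{lem0} together with the hypothesis $n^{-1}\X_n'\X_n\stackrel{a.s.}{\longrightarrow}C$. First, since $\det(n^{-1}\X_n'\X_n)\to\det C>0$ a.s., for large $n$ the matrix $\X_n'\X_n$ is invertible a.s., so $\ker(\X_n)=\{0\}$ and the SLOPE minimizer is unique a.s., as noted in Section~\ref{sec:bign}.

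To prove that \eqref{eq:alpha/nA} is sufficient (handling the $\beta\neq 0$ and $\beta=0$ cases simultaneously), I would use the optimality condition $\X_n'(Y_n-\X_n\betaSLOPE_n)=v_n$ for some $v_n\in\partial J_{\Lambda_n}(\betaSLOPE_n)$. By \eqref{eq:subdiff_norm} one has $J_{\Lambda_n}^*(v_n)\le 1$, and the explicit form of the dual sorted $\ell_1$ norm yields the a priori bound $\|v_n\|_\infty=|v_n|_{(1)}\le\lambda_1^{(n)}$. Rearranging the KKT identity gives
\[
\betaSLOPE_n=(n^{-1}\X_n'\X_n)^{-1}\left[n^{-1}\X_n'Y_n-n^{-1}v_n\right].
\]
Since $n^{-1}\X_n'Y_n=(n^{-1}\X_n'\X_n)\beta+n^{-1}\X_n'\beps_n\stackrel{a.s.}{\longrightarrow}C\beta$ by Lemma~\ref{lem0}, and $\|n^{-1}v_n\|_\infty\le \lambda_1^{(n)}/n\to 0$ under \eqref{eq:alpha/nA}, I conclude $\betaSLOPE_n\stackrel{a.s.}{\longrightarrow}\beta$.

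For the necessity when $\beta\neq 0$, I would use the complementary part of \eqref{eq:subdiff_norm}, namely $v_n'\betaSLOPE_n=J_{\Lambda_n}(\betaSLOPE_n)$. Substituting $v_n=\X_n'\X_n(\beta-\betaSLOPE_n)+\X_n'\beps_n$ and dividing by $n$,
\[
\tfrac{1}{n}J_{\Lambda_n}(\betaSLOPE_n)=(\betaSLOPE_n)'(n^{-1}\X_n'\X_n)(\beta-\betaSLOPE_n)+(\betaSLOPE_n)'(n^{-1}\X_n'\beps_n).
\]
Under the assumption $\betaSLOPE_n\stackrel{a.s.}{\longrightarrow}\beta$ the right-hand side converges a.s.\ to $\beta'C\cdot 0+\beta'\cdot 0=0$ (again by Lemma~\ref{lem0}), whereas the elementary lower bound $J_{\Lambda_n}(\betaSLOPE_n)\ge\lambda_1^{(n)}\|\betaSLOPE_n\|_\infty$ combined with $\|\betaSLOPE_n\|_\infty\to\|\beta\|_\infty>0$ a.s.\ forces $\lambda_1^{(n)}/n\to 0$.

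The delicate step is this necessity argument: one must extract a deterministic conclusion about the tuning sequence $(\lambda_1^{(n)}/n)_n$ out of the random a.s.\ convergence of $\betaSLOPE_n$. The trick is that $\|\beta\|_\infty>0$ furnishes a uniform-in-$\omega$ positive lower bound on $\|\betaSLOPE_n\|_\infty$ along a full-measure event, which eliminates the randomness in the final inequality.
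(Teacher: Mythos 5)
Your proposal is correct. The uniqueness step and the sufficiency direction coincide with the paper's argument: both write the KKT residual $\pi_n=v_n=\X_n'(\Y_n-\X_n\betaSLOPE_n)$, bound $\|v_n\|_\infty\le\lambda_1^{(n)}$ via $J_{\Lambda_n}^*(v_n)\le 1$, and conclude from $n^{-1}\X_n'\X_n\to C$ and Lemma~\ref{lem0}. Where you genuinely diverge is the necessity direction. The paper exploits the affine part of the subdifferential description (Proposition~\ref{prop:affine}), namely $\U_{M_n}'\pi_n=\tilde\bLambda_n$ with $M_n=\model(\betaSLOPE_n)$, to get $p\|\pi_n\|_\infty\ge\lambda_1^{(n)}$ once $M_n\neq 0$, and then must separately rule out $M_n=0$ for large $n$ (using $n^{-1}\X_n'\Y_n\to C\beta\neq 0$). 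You instead use only the elementary equality $v_n'\betaSLOPE_n=J_{\Lambda_n}(\betaSLOPE_n)$ from \eqref{eq:subdiff_norm} together with the crude bound $J_{\Lambda_n}(b)\ge\lambda_1^{(n)}\|b\|_\infty$, which bypasses the pattern machinery entirely and makes the "eventually nonzero" discussion unnecessary, since $\|\betaSLOPE_n\|_\infty\to\|\beta\|_\infty>0$ is automatic under the assumed consistency. Your closing remark about extracting the deterministic conclusion from a single $\omega$ in a full-measure event is exactly the right point and is implicit in the paper as well. Both routes are valid; yours is slightly more self-contained, while the paper's keeps the argument aligned with the $\pi_n$-based formalism used throughout Section~\ref{sec:bign}.
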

	\begin{proof}[Proof of Theorem~\ref{thm:consist}]
		The assumption \eqref{assumption on design3} implies that the matrix $X_n'X_n$ is positive definite for large $n$ almost surely and hence ensuring that $\ker(\X_n)=\{0\}$. It is known that under trivial kernel, the set of SLOPE minimizers contains one element only.
		
		By Proposition \ref{prop:affine}, $\betaSLOPE_n$ is the SLOPE estimator of $\beta$ in a linear regression model $\Y_n=\X_n\beta+\beps_n$ if and only if for $\pi_n=\X_n'(\Y_n-\X_n\betaSLOPE_n)$ we have
		\begin{align}\label{eq:cond1}
			J_\Lambda^*(\pi_n)\leq 1
		\end{align}
		and
		\begin{align}\label{eq:cond2}
			\U_{M_n}' \pi_n = \tilde\bLambda_n,
		\end{align}
		where $M_n=\model(\betaSLOPE_n)$ and $\tilde\bLambda_n=U_{|M_n|\downarrow}'\bLambda_n$.
		By the definition of $\pi_n$ we have
		\[
		\betaSLOPE_n=(X_n' X_n)^{-1}X_n'  Y_n-(X_n'X_n)^{-1}\pi_n = \betaOLS_n-\left(\frac1nX_n'X_n\right)^{-1}\left(\frac1n\pi_n\right).
		\]
		Since in our setting $\betaOLS_n$ is strongly consistent, $\hat\beta^{SLOPE}_n \stackrel{a.s.}{\longrightarrow} \beta$ if and only if \\
		$(n^{-1}X_n'X_n)^{-1}\left(n^{-1}\pi_n\right)\stackrel{a.s.}{\longrightarrow}0$. 
		In view of \eqref{assumption on design3}, we have $(n^{-1}X_n'X_n)^{-1}\left(n^{-1}\pi_n\right)\stackrel{a.s.}{\longrightarrow}0$ if~and~only~if $n^{-1}\pi_n\stackrel{a.s.}{\longrightarrow}0$.
		
		Assume $n^{-1}\lambda_1^{(n)}\to0$. 
		By \eqref{eq:cond1} we have $\| \pi^n\|_\infty\leq \lambda_1^{(n)}$, which gives
		\[
		\left\| \frac{\pi_n}{n}\right\|_\infty \leq \frac{\lambda_1^{(n)}}{n}\to0.
		\]
		Therefore, \eqref{eq:alpha/nA} implies that $\hat\beta^{SLOPE}_n \stackrel{a.s.}{\longrightarrow} \beta$.
		
		Now assume that $\beta\neq0$ and $\betaSLOPE_n$ is strongly consistent, \textit{i.e.}, $n^{-1}\pi_n\stackrel{a.s.}{\longrightarrow}0$.
		Then, \eqref{eq:cond2} gives 
		\begin{align}\label{eq:ineq2cond}
			p \|\pi_n\|_\infty\geq\| U_{M_n}' \pi_n \|_\infty  = \| \tilde\Lambda_n\|_\infty \geq \lambda_1^{(n)}
		\end{align}
		provided $M_n\neq0$.
		Applying \eqref{eq:cond1} for $\hat\beta^{SLOPE}_n = 0$, we note that $M_n(\omega)=0$ if and only if 
		\begin{align*}
			J_{n^{-1}\Lambda_n}^*\left(n^{-1}X_n(\omega)' Y_n(\omega)\right)\leq1.
		\end{align*} 
		In view of Lemma \ref{lem0}, it can be easily verified that $n^{-1}X_n' Y_n\stackrel{a.s.}{\longrightarrow}C\beta$.
		Since 
		\[
		\left\|\frac{1}{n}\pi_n\right\|_\infty\quad\geq\quad \left\|\frac{1}{n}\pi_n \right\|_\infty {\bf 1}_{(M_n=0)}\quad=\quad\left\|\frac{1}nX_n' Y_n \right\|_\infty {\bf 1}_{(M_n=0)},
		\]
		we see that for $\beta\neq0$, we have $M_n\neq0$ for large $n$ almost surely. Thus, for $\beta\neq0$ we eventually obtain for large $n$
		\[
		\frac{\lambda_1^{(n)}}{n}\leq  p\left\| \frac{\pi_n}{n}\right\|_\infty\quad\mbox{a.s.}
		\]
	\end{proof}

	\section{\texorpdfstring{Geometric interpretation of $\X'(\tilde \X_M')^+\tilde \Lambda_M$}{Geometric interpretation of X'(tilde XM')+tilde LambdaM}}\label{sec:geom}
	Let $0\neq \beta\in \R^p$ where $\model(\beta)=M$. 
	For a SLOPE minimizer $\hat \beta\in S_{\X,\alpha\Lambda}(X\beta)$ the following  occurs:
	$$\frac{1}{\alpha}X'X(\beta-\hat \beta)\in \partial{J_\Lambda}(\hat \beta).$$
	In addition when $\model(\hat \beta)=M$, then the following facts hold:
	\begin{itemize}
		\item $\beta-\hat\beta\in \col(U_M)$, 
		so that $\frac{1}{\alpha}X'X(\beta-\hat \beta) \in \X'\X \,\col(U_M)$.
		\item $\partial{J_\Lambda}(\hat \beta)=\partial{J_\Lambda}(M)$.
	\end{itemize} Therefore, the noiseless pattern recovery  by SLOPE  clearly 
	implies that the
	vector space $X'X\col(U_M)=\col(X'\tilde X_M)$  intersects  $\partial{J_\Lambda}(M)$. Actually, the vector $\bar \Pi =X'(\tilde{X}'_M)^{+}\tilde{\Lambda}_M$ appearing in Corollary \ref{cor:mainmr0} has a geometric interpretation given in Proposition \ref{prop:geom}.
	\begin{proposition}\label{prop:geom}
		Let $X\in \R^{n\times p}$, $0\neq M\in \mathcal{P}^{\rm SLOPE}_p$ and $\Lambda \in \R^{p+}$. We recall that 
		$\tilde X_M=XU_M$,  $\tilde \Lambda_M=U_{|M|_{\downarrow}}'\Lambda$ and $\bar \Pi =X'(\tilde{X}'_M)^{+}\tilde{\Lambda}_M$. 
		We have the following statements:
		\begin{enumerate}
			\item[i)] If $\tilde{\Lambda}_M\notin\col(\tilde X_M')$ then $\aff(\partial{J_\Lambda}(M))\cap \col(X'\tilde X_M)=\emptyset$.
			\item[ii)] If $\tilde{\Lambda}_M\in\col(\tilde X_M')$ then $\aff(\partial{J_\Lambda}(M))\cap \col(X'\tilde X_M)=\{\bar \Pi\}$.
			\item[iii)] Pattern recovery by SLOPE in the noiseless case is equivalent to
			$\col(X'\tilde{X}_M)\cap \partial{J_\Lambda}(M)\not=\emptyset$.
		\end{enumerate}
	\end{proposition}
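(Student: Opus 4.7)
My plan is to exploit the explicit description of $\aff(\partial J_\Lambda(M))$ given in Lemma \ref{lemma:affine_space}, namely $\aff(\partial J_\Lambda(M))=\{v\in\R^p\colon U_M'v=\tilde\Lambda_M\}$, together with the linear algebra identity $\col(A^+)=\col(A')$ for any matrix $A$. The three claims are really a single computation viewed from different angles, and the main obstacle is only bookkeeping with the pseudoinverse.

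For (i), I would take an arbitrary $v\in \aff(\partial J_\Lambda(M))\cap \col(X'\tilde X_M)$, write $v=X'\tilde X_M s$, and compute $\tilde\Lambda_M=U_M'v=U_M'X'\tilde X_M s=\tilde X_M'\tilde X_M s$. Since $\col(\tilde X_M'\tilde X_M)=\col(\tilde X_M')$, this forces $\tilde\Lambda_M\in\col(\tilde X_M')$, contradicting the hypothesis; so the intersection is empty.

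For (ii), I would first show $\bar\Pi\in \col(X'\tilde X_M)$. Since $\col((\tilde X_M')^+)=\col(\tilde X_M)$, there exists $s$ with $(\tilde X_M')^+\tilde\Lambda_M=\tilde X_M s$, and then $\bar\Pi=X'\tilde X_M s$. Next I would check $\bar\Pi\in\aff(\partial J_\Lambda(M))$ by computing $U_M'\bar\Pi=\tilde X_M'(\tilde X_M')^+\tilde\Lambda_M=\tilde\Lambda_M$, where the last equality uses that $\tilde X_M'(\tilde X_M')^+$ is the orthogonal projection onto $\col(\tilde X_M')$ and $\tilde\Lambda_M$ lies there by hypothesis. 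For uniqueness, given two candidates $v_1,v_2$, the difference satisfies $v_1-v_2=X'\tilde X_M t$ with $\tilde X_M'\tilde X_M t=0$, whence $\|\tilde X_M t\|^2=t'\tilde X_M'\tilde X_M t=0$ and therefore $X'\tilde X_M t=0$.

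For (iii), the equivalence follows directly from Corollary \ref{cor:mainmr0} and parts (i)--(ii). If noiseless recovery holds, then $\tilde\Lambda_M\in\col(\tilde X_M')$ and $\bar\Pi\in\partial J_\Lambda(M)$; by (ii), $\bar\Pi\in \col(X'\tilde X_M)$, so the intersection contains $\bar\Pi$. Conversely, any $v\in \col(X'\tilde X_M)\cap\partial J_\Lambda(M)$ lies in $\aff(\partial J_\Lambda(M))\cap\col(X'\tilde X_M)$, which by (i) forces $\tilde\Lambda_M\in\col(\tilde X_M')$ and by (ii) forces $v=\bar\Pi$; hence $\bar\Pi\in\partial J_\Lambda(M)$ with $\tilde\Lambda_M\in\col(\tilde X_M')$, exactly the SLOPE irrepresentability condition of Corollary \ref{cor:mainmr0}. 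The only subtlety worth flagging is the pseudoinverse identity $\col((\tilde X_M')^+)=\col(\tilde X_M)$, which I would either cite from \cite{inverses} or derive in one line from $(\tilde X_M')^+=\tilde X_M(\tilde X_M'\tilde X_M)^+$.
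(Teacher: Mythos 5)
Your proof is correct and follows essentially the same route as the paper's: both use Lemma \ref{lemma:affine_space} to identify $\aff(\partial J_\Lambda(M))$ as $\{v:U_M'v=\tilde\Lambda_M\}$, the pseudoinverse facts $\col((\tilde X_M')^+)=\col(\tilde X_M)$ and that $\tilde X_M'(\tilde X_M')^+$ projects onto $\col(\tilde X_M')$, the same uniqueness argument via $\tilde X_M'\tilde X_M t=0\Rightarrow\tilde X_M t=0$, and Corollary \ref{cor:mainmr0} for part (iii). No substantive differences.
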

	\begin{proof}
		i) We recall that, according to Lemma \ref{lemma:affine_space}, $\aff(\partial{J_\Lambda}(M))=\{v\in \R^p\colon U_M'v=\tilde \Lambda_M \}$.
		If $\aff(\partial{J_\Lambda}(M))\cap \col(X'\tilde X_M)\neq \emptyset$ then  there exists $z\in \R^k$, where $k=\|M\|_\infty$, such that $X'\tilde X_Mz\in \aff(\partial{J_\Lambda}(M))$.
		Consequently, $\tilde \Lambda_M=U_M'X'\tilde X_Mz=\tilde X_M'\tilde X_M'z$  thus $\tilde{\Lambda}_M\in\col(\tilde X_M')$ which establishes i). \\
		ii) If $\tilde{\Lambda}_M\in\col(\tilde X_M')$ then 
		$\bar \Pi\in \aff(\partial{J_\Lambda}(M))$.
		Indeed, since $ \tilde X_M'(\tilde X_M')^+$ is the projection on $\col(\tilde X_M')$ we have 
		$$U_M'\bar \Pi = \tilde X_M'(\tilde X_M')^+\tilde \Lambda_M=\tilde \Lambda_M.$$
		Moreover, since $\col((\tilde X_M')^+)=\col(\tilde X_M)$ we deduce that $\bar \Pi\in \col(X'\tilde X_M)$. To prove that $\bar \Pi$ is the unique point in the intersection, let us prove that $\col(X'\tilde{X}_M) \cap \col(U_M)^\perp=\{0\}$. Indeed, if $v\in \col(X'\tilde{X}_M) \cap \col(U_M)^\perp$ 
		then $v=X'\tilde{X}_Mz$ for some $z\in \R^k$ and 
		$U_M'v=0$. Therefore, $\tilde X_M'\tilde{X}_Mz=0$,  consequently  $\tilde X_Mz=0$ and thus $v=\{0\}$. Finally,  if $\Pi \in \aff(\partial{J_\Lambda}(M))\cap \col(X'\tilde X_M)$ then $\Pi-\bar \Pi\in \col(X'\tilde X_M)$ and 
		$U_M'(\Pi-\bar \Pi)=0$ which implies that $\Pi=\bar \Pi$ and establishes ii).\\\\
		According to Corollary \ref{cor:mainmr0},
		pattern recovery by SLOPE in the noiseless case is equivalent to $\bar \Pi \in \partial{J_\Lambda}(M)$ which is equivalent, by i) and ii), to 
		$\col(X'\tilde{X}_M)\cap \partial{J_\Lambda}(M)\not=\emptyset$. 
	\end{proof}
	
	
	\begin{example}
 		\leavevmode
 		\makeatletter
 		\@nobreaktrue
 		\makeatother
		\begin{itemize}
			\item
			We observe on the right picture in Fig. \ref{fig:OWL_path} that the noiseless pattern recovery occurs when $\bar\beta=(5,3)'$ (thus  $M=\model(\bar\beta)=(2,1)'$). To corroborate this fact note that  $\tilde X_M=X$
			thus  $\col(X'\tilde X_{M})=\R^2$ and consequently  $\col(X'\tilde X_{M})$ intersects $\partial{J_\Lambda}(M)$.
			\item
			We observe on the left picture in Fig. \ref{fig:OWL_path} that the noiseless pattern recovery does not occur when $\beta=(5,0)'$ (thus  $M=\model(\beta)=(1,0)'$). To corroborate this fact,  Figure \ref{fig:geom} illustrates that $\col(X'\tilde X_M)=\col((1,0.6)')$ does not intersect $\partial{J_\Lambda}(M)=\{4\}\times [-2,2]$. 
		\end{itemize}
	\end{example}

	\begin{figure}[h!]
		\centering
		\begin{tikzpicture}[scale=0.4]
			\draw[very thin,color=lightgray] (-5.1,-5.1) grid (5.1,5.1);
			\draw[->] (-5.2,0) -- (5.2,0) node[right] {$\beta_1$};
			\draw[->] (0,-5.2) -- (0,5.2) node[above] {$\beta_2$};
			\begin{scope}[ultra thick]
				\draw[color=black] (4,2) -- (2,4) -- (-2,4) -- (-4,2) -- (-4,-2) -- (-2,-4) -- (2,-4) -- (4,-2) -- (4,2);
			\end{scope}
			\draw[color=blue] (-5.2,-3.12) -- (5.2,3.12); 
			\draw[color=red, line width=1mm] (4,-2) -- (4,2);
			\draw[dashed, color=red, line width=0.5mm] (4,2) -- (4,5.2);
			\draw[dashed, color=red, line width=0.5mm] (4,-2) -- (4,-5.2);
			\draw (5.8,1) node[color=red]{$\partial{J_\Lambda}(M)$};
			\draw (5.8,1) node[color=red]{$\partial{J_\Lambda}(M)$};
			\draw (7,3.7) node[color=blue]{$\col(X'\tilde X_M)$};
			\draw (4,2.4)
			node[color=purple,circle,fill,inner sep=0pt,minimum size=3pt]{};
			\draw (4.8,2.3) node[color=purple]{$\bar \Pi$};
		\end{tikzpicture}        
		\caption{This figure illustrates $\bar \Pi$ in purple as the unique intersection point between $\col(X'\tilde X_M)=\col((1,0.6)')$ in blue and  $\aff(\partial{J_\Lambda}(M))$ in red. Since $\bar \Pi \notin \partial{J_\Lambda}(M)=\{4\}\times [-2,2]$ 
			then, in the noiseless case, SLOPE cannot recover $M=\model(\beta)=(1,0)'$. }
		\label{fig:geom}
	\end{figure}
	
\bibliographystyle{elsarticle-num-names} 
\bibliography{biblio_pattern}





\end{document}